\newtheorem{lemm}{Lemma}[section]
\newtheorem{coro}{Corollary}[section]
\newtheorem{prop}{Proposition}[section]
\newtheorem{theo}{Theorem}[section]
\theoremstyle{definition}
\newtheorem{defi}{Definition}[section]
\newtheorem{rema}{Remark}[section]
\numberwithin{equation}{section}
\def\Bbb{\mathbb} 
\def\ep{\varepsilon}
\def\a{\alpha}
\def\fint{\operatorname {--\!\!\!\!\!\int\!\!\!\!\!--}}
\def\rn1{\Bbb R^{N+1}}
\def\rn{\Bbb R^{N}}
\def\a*{\alpha^*_\ep}
\def\R{\mathbb R}
\def\N{\mathbb N}
\begin{document}
\title[Harnack inequality for $p(x)-$type elliptic equations ]{Local bounds, Harnack inequality and H\"older continuity for divergence type elliptic equations with non-standard growth}
\author[Noemi Wolanski]{Noemi  Wolanski}
\address{Departamento  de
Ma\-te\-m\'a\-ti\-ca, Facultad de Ciencias Exactas y Naturales,
Universidad de Buenos Aires, (1428) Buenos Aires, Argentina.}
\email[Noemi
Wolanski]{wolanski@dm.uba.ar}
\thanks{Supported by the Argentine Council of Research CONICET under the project PIP625, Res. 960/12 and  UBACYT 20020100100496.
The authors are members of CONICET}

\keywords{Free boundary
problem, variable exponent spaces, singular perturbation.
\\
\indent 2010 {\it Mathematics Subject Classification.} 35R35,
35B65, 35J60, 35J70}\maketitle
\begin{abstract}
In this paper we obtain a Harnack type inequality for solutions to elliptic equations in divergence form with non-standard $p(x)-$type growth. A model equation is the inhomogeneous $p(x)-$laplacian. Namely,
\[
\Delta_{p(x)}u:=\mbox{div}\big(|\nabla u|^{p(x)-2}\nabla u\big)=f(x)\quad\mbox{in}\quad\Omega
\]
for which we prove Harnack inequality when $f\in L^{q_0}(\Omega)$ if $\max\{1,\frac N{p_{min}}\}<q_0\le \infty$. The  constant in Harnack inequality depends on $u$ only through  $\||u|^{p(x)}\|_{L^1(\Omega)}^{p_{max}-p_{min}}$. Dependence of the constant on $u$ is known to be necessary in the case of variable $p(x)$. As in previous papers, log-H\"older continuity on the exponent $p(x)$ is assumed.

We also prove that weak solutions are locally bounded and H\"older continuous when $f\in L^{q_0(x)}(\Omega)$ with $q_0\in C(\Omega)$ and $\max\{1,\frac N{p(x)}\}<q_0(x)$ in $\Omega$.

These results are then generalized to elliptic equations
\[
\mbox{div}A(x,u,\nabla u)=B(x,u,\nabla u)
\]
with $p(x)-$type growth.
\end{abstract}

\bigskip

\begin{section}{Introduction}
\label{sect-intro}

The $p(x)$-Laplacian, defined as
\begin{equation*}
\Delta_{p(x)}u:=\mbox{div}(|\nabla u(x)|^{p(x)-2}\nabla u),
\end{equation*}
extends the laplacian, where $p(x)\equiv 2$, and the
$p$-laplacian,
 where $p(x)\equiv p$ with $1<p<\infty$. This operator has been
 used in the modelling of electrorheological fluids (\cite{R}) and
 in image processing (\cite{AMS}, \cite{CLR}), for instance.

 Up to these days, a great deal of results have been obtained for solutions to equations related to this operator. We will only state in this introduction those results that are related to the ones we address in this paper.

 One of the first issues that come into mind is the regularity of solutions to equations involving the $p(x)-$laplacian or more general elliptic equations with $p(x)-$type growth. Another result --that among other things implies  H\"older continuity of solutions-- is Harnack inequality. These two issues have been addressed in several papers and we will describe in this introduction those results we are aware of.

 Let us state, for the record, that our main concern when starting our research was to obtain Harnack inequality for nonnegative  weak solutions of the inhomogeneous equation
 \begin{equation}\label{p(x)-f}
 \Delta_{p(x)}u=f(x)\quad\mbox{in}\quad\Omega
 \end{equation}
 that, strangely enough, had not been addressed previously.

 By a weak solution we mean a function in $W^{1,p(x)}(\Omega)$ that satisfies \eqref{p(x)-f} in the weak sense. (See the definition and some properties of these spaces below).

 When dealing with equations of $p(x)-$type growth it is always assumed that  $1<p_1\le p(x)\le p_2<\infty$ in $\Omega$. Also, some kind of continuity is assumed since most results on $L^p$ spaces cease to hold without any continuity assumption. In particular, in order to get Harnack inequality, log-H\"older continuity is always assumed and we will do so in this paper. (See the definition of log-H\"older continuity below).

 Harnack inequality for solutions of \eqref{p(x)-f} with $f\equiv0$ states that, for any nonnegative bounded weak solution $u$ there exists a constant $C$ --that depends on $u$-- such that, for balls $B_R(x_0)$ such that $B_{4R}(x_0)\subset\Omega$,
 \[
 \sup_{B_R(x_0)}u\le C\big[\inf_{B_R(x_0)}u+R\big].
 \]

 The dependence of $C$ on $u$ cannot be removed as observed with an example in \cite{HKLMP}. In \cite{HKLMP} the authors get this inequality for quasiminimizers of the functional
 \[
 J(u)=\int_\Omega\frac{|\nabla u|^{p(x)}}{p(x)}\,dx.
 \]
 Solutions to \eqref{p(x)-f} with $f\equiv0$ are minimizers, and therefore, quasiminimizers.

 \smallskip

 In \cite{HKLMP} the authors improve the dependence of $C$ on $u$. In fact, in \cite{ZL} Harnack inequality had been obtained with $C$ depending on the $L^\infty$ norm of $u$. In \cite{HKLMP} instead, the dependence was improved to the $L^t$ norm of $u$ for arbitrarily small $t>1$ if $R$ is small enough depending only on $p$ and $t$. In particular, by taking $t=p_1=\inf_\Omega p(x)$ they get a dependence on $\|u^{p(x)}\|_{L^1(B_{4R}(x_0))}$ that is finite by the definition of a weak solution. In particular, no a priori $L^\infty$ bound is involved in Harnack inequality.

 \smallskip

 Later on, the same inequality with a similar dependence on $u$ was obtained for solutions of an obstacle problem related to the functional $J(u)$ in \cite{HHKLM}.

 \smallskip

 We would like to comment that  \cite{ZL} dealt  with a more general equation. Namely,
 \[\Delta_{p(x)}u=(\lambda b(x)-a(x))|u|^{p(x)-2}u\quad\mbox{in}\quad\Omega\]
with $a$ and $b$ nonnegative and bounded and $\lambda$ a positive constant.

 \smallskip

 As is well known, H\"older continuity is deduced form Harnack inequality. Anyway, there are methods that give H\"older continuity for weak solutions without going through Harnack inequality. A result of this kind that applies to more general equations --possibly inhomogeneous-- can be found in \cite{FZ} where the authors prove that bounded weak solutions to
 \begin{equation}\label{eq-general-intro}
 \mbox{div}A(x,u,\nabla u)=B(x,u,\nabla u)\quad\mbox{in}\quad\Omega
 \end{equation}
 are locally H\"older continuous if $A(x,s,\xi), B(x,s,\xi)$ satisfy the structure conditions: For any $M_0>0$ there exist positive constants $\alpha,C_1,C_2,b$ such that, for $x\in\Omega$, $|s|\le M_0$, $\xi\in\R^N$,
 \begin{enumerate}
 \item[(a)] $A(x,s,\xi)\cdot\xi\ge \alpha |\xi|^{p(x)}-b$.

\item[(b)] $\big|A(x,s,\xi)\big|\le C_1|\xi|^{p(x)-1}+b$.

\item[(c)] $\big|B(x,s,\xi)\big|\le C_2|\xi|^{p(x)}+b$.
\end{enumerate}

The condition that $u$ is bounded is essential when the growth of $B$ in the gradient variable is the one in (c). Boundedness is proved in \cite{FZ} under the condition that $B(x,s,\xi)$ grows as $(|s|+|\xi|)^{p(x)-1}$, for instance.

\smallskip

Finally, let us comment that, under additional regularity assumptions on $A$ and $B$ and some different structure conditions (in particular, under the necessary assumption that $p(x)$ be H\"older continuous), H\"older continuity of the derivatives was obtained in \cite{Fan}. (See also \cite{AM} for this result in the case of minimizers of the functional $J(u)$).

\medskip

In the present paper we are mainly concerned with Harnack inequality. Our main goal is to obtain this inequality in the case of an inhomogeneous equation with minimal integrability conditions on the right hand side --that in the case or $p$ constant stand for $f\in L^q(\Omega)$ with $\max\{1,N/p\}< q\le \infty$-- (see the classical paper \cite{S}).

\smallskip

On the other hand, in several applications we found ourselves dealing with families of bounded nonnegative weak solutions --that are not uniformly bounded, not even in $L^{p(x)}-$norm-- and in need of using Harnack inequality with the same constant $C$ for all the functions in the family. As stated above, we could not use any of the known results (not even for solutions of \eqref{p(x)-f} with $f\equiv0$).

In the present paper, a careful follow up of the constants involved in the proofs allows us to see that the dependence of $C$ on $u$ is actually through $\|u^{p(x)}\|_{L^1(B_{4R})}^{p_+^{4R}-p_-^{4R}}$ where $p_+^{4R}=\sup_{B_{4R}}p$ and
$p_-^{4R}=\inf_{B_{4R}}p$. This makes all the difference in many applications.
Anyway, this was also the case in the previous papers on the homogeneous equation. Unfortunately, the results were not stated in this way so that they could not be used in many situations.

\smallskip

We start our paper with the case of \eqref{p(x)-f} in order to show the ideas and techniques in the simplest possible inhomogeneous case. Then, in Section \ref{sect-general-eqs} we consider  weak solutions to \eqref{eq-general-intro} under the structure assumption: For any $M_0>0$ there exist a constant $\alpha$ and nonnegative functions $g_0, C_0\in L^{q_0}(\Omega)$, $g_1,C_1\in L^{q_1}(\Omega)$, $f, C_2\in L^{q_2}(\Omega)$, $K_1\in L^\infty(\Omega)$, $K_2^{p(x)}\in L^{t_2}(\Omega)$ with $\max\{1,\frac N{p_1}\}<q_2,t_2\le\infty$ ($p_1=\inf_\Omega p$), $\max\{1,\frac N{p_1-1}\}<q_0,q_1\le\infty$ such that, for every $x\in\Omega$, $|s|\le M_0$, $\xi\in\R^N$,
\begin{enumerate}
\item $A(x,s,\xi)\cdot\xi\ge \alpha|\xi|^{p(x)}-C_0(x)|s|^{p(x)}-g_0(x)$.

\item $\big|A(x,s,\xi)\big|\le g_1(x)+C_1(x)|s|^{p(x)-1}+K_1(x)|\xi|^{p(x)-1}$.

\item $\big|B(x,s,\xi)\big|\le f(x)+C_2(x)|s|^{p(x)-1}+K_2(x)|\xi|^{p(x)-1}$

\end{enumerate}
and we prove
\begin{theo}\label{harnack-elliptic-intro} Let $\Omega\subset\R^N$ be bounded and let $p$ log-H\"older continuous in $\Omega$.
Let $A(x,s,\xi)$, $B(x,s,\xi)$ satisfy the structure conditions (1), (2) and (3). Let $u\ge0$ be a  bounded weak solution to \eqref{eq-general-intro} and let $M_0$ be such that $u\le M_0$ in $\Omega$. Let $\Omega'\subset\subset\Omega$.  Then, there exist $C$ and $0<R_0\le \min\{1,\frac14\mathrm{dist}(\Omega',\partial\Omega)\}$  such that, for every $x_0\in\Omega'$, $0<R\le R_0$,
\begin{equation}\label{eq-harnack-elliptic}
\sup_{B_{R}(x_0)}u\le C\big[ \inf_{B_{R}(x_0)}u+R+\mu R\big]
\end{equation}
where
\[
\begin{aligned}
\mu&=\Big[R^{1-\frac N{q_2}}\|f\|_{L^{q_2}(B_{4R}(x_0))}\Big]^{\frac1{p_-^{4R}-1}}+\Big[R^{-\frac N{q_0}}\|g_0\|_{L^{q_0}(B_{4R}(x_0))}\Big]^{\frac1{p_-^{4R}-1}}
+\Big[R^{-\frac N{q_1}}\|g_1\|_{L^{q_1}(B_{4R}(x_0))}\Big]^{\frac1{p_-^{4R}-1}}.
\end{aligned}
\]

The constant $C$ depends only on  $\alpha, q_i$,  the log H\"older modulus of continuity of $p$ in $\Omega$, $\mu^{p_+^{4R}-p_-^{4R}}$,  $M^{p_+^{4R}-p_-^{4R}}$, $\|C_i\|_{L^{q_i}(B_{4R}(x_0))}$, $\|K_1^{p(x)}\|_{L^\infty(B_{4R}(x_0))}$, and  $\|K_2^{p(x)}\|_{L^{t_2}(B_{4R}(x_0))}$ where $p_+^{4R}=\sup_{B_{4R}(x_0)}  p$, $p_-^{4R}=\inf_{B_{4R}(x_0)}  p$ and $M=\|u\|_{L^{p_-^{4R}}(\Omega)}$. (Theorem \ref{harnack-elliptic}).

\smallskip

Observe that $\mu^{p_+^{4R}-p_-^{4R}}$ is bounded independently of $R$.

\smallskip

Observe that, when the functions in the structure conditions are independent of $M_0$,  neither $C$ nor $\mu$ depend on the $L^\infty$ norm of $u$. Moreover, in this case any weak solution is locally bounded (see Remark \ref{local bounds-elliptic}).
\end{theo}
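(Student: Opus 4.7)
The plan is to carry out Moser iteration adapted to $p(x)$-growth, in parallel with the treatment of the inhomogeneous $p(x)$-Laplacian developed earlier in the paper, exploiting the hypothesis $u\le M_0$ to fold the zero-order terms of the structure conditions into the data. Specifically, using $|u|^{p(x)-1}\le M_0^{p_+^{4R}-1}+1$ and $|u|^{p(x)}\le M_0^{p_+^{4R}}+1$, the conditions (1)--(3) reduce, pointwise in $B_{4R}$, to
\[
A\cdot\nabla u\ge \alpha|\nabla u|^{p(x)}-G_0,\quad |A|\le K_1|\nabla u|^{p(x)-1}+G_1,\quad |B|\le K_2|\nabla u|^{p(x)-1}+F,
\]
with $G_i\in L^{q_i}$, $F\in L^{q_2}$, and norms controlled by $M_0$ together with the $\|C_i\|_{L^{q_i}}$. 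This places us in the same framework as the inhomogeneous $p(x)$-Laplacian already treated, up to the additional lower-order gradient term $K_2|\nabla u|^{p(x)-1}$ on the right-hand side.

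I would then test the weak form with $\varphi=\eta^{p_+^{4R}}\psi(v)$, where $v=u+k$, $k\simeq R(1+\mu)$ and $\psi(v)=v^\beta$ (or a truncated variant for $\beta<0$). By construction of $k$, H\"older inequalities applied to $F, G_0, G_1$ in their respective $L^{q_i}(B_{4R})$ norms produce factors bounded by $k^{p(x)-1}$ times a Sobolev-dominated quantity, and hence absorbable. The principal part and the $K_1|\nabla v|^{p(x)-1}$ piece are handled by Young's inequality in the classical Moser style. The genuinely new term $K_2|\nabla v|^{p(x)-1}\psi(v)$ is split via Young as $\delta|\nabla v|^{p(x)}\psi'(v)+c_\delta K_2^{p(x)}\psi(v)^{p(x)/(p(x)-1)}\psi'(v)^{-1/(p(x)-1)}$; using $K_2^{p(x)}\in L^{t_2}$ and H\"older's inequality on $B_{4R}$, the second term picks up a small factor $R^\sigma$ with $\sigma>0$ because $t_2>\max\{1,N/p_1\}$, and can be absorbed into the $k$-gauge. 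The outcome is a Caccioppoli-type inequality of the form
\[
\int_{B_{4R}}\eta^{p_+^{4R}}|\nabla \Phi(v)|^{p(x)}\,dx\le C(\beta)\int_{B_{4R}}\Phi(v)^{p(x)}|\nabla\eta|^{p(x)}\,dx
\]
for a suitable power $\Phi(v)$. Combined with the Sobolev-Poincar\'e inequality in $W^{1,p(x)}(B_R)$ (valid under log-H\"older continuity, with Sobolev exponent uniform on $B_R$), this yields reverse H\"older inequalities on nested balls that can be Moser-iterated. Iteration with positive $\beta$ produces the sup estimate $\sup_{B_R}v\le C\bigl(\fint_{B_{2R}}v^{p_-^{4R}}\bigr)^{1/p_-^{4R}}$; iteration with negative $\beta$, together with the logarithmic estimate obtained by testing with $\eta^{p_+^{4R}}v^{-1}$ and an application of John--Nirenberg, yields the weak Harnack estimate $\bigl(\fint_{B_{2R}}v^{p_-^{4R}}\bigr)^{1/p_-^{4R}}\le C\inf_{B_R}v$. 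Chaining the two inequalities and recalling $v=u+k$ gives \eqref{eq-harnack-elliptic}.

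The main obstacle is the careful bookkeeping of constants along the Moser iteration, so that the dependence on $u$ enters only through $M^{p_+^{4R}-p_-^{4R}}$ and $\mu^{p_+^{4R}-p_-^{4R}}$. Each step in which one converts an $L^{p(x)}$ quantity into $L^{p_\pm^{4R}}$ on $B_{4R}$ (needed to move between the natural exponent in the Caccioppoli inequality and the fixed exponent used in the iteration) contributes exactly such a factor, and log-H\"older continuity of $p$ keeps the total product bounded. Relative to the inhomogeneous $p(x)$-Laplacian already treated, the only genuine novelty is the term $K_2|\nabla u|^{p(x)-1}$ in $|B|$: handling it as a perturbation via the Young/H\"older splitting outlined above is precisely what forces the hypothesis $t_2>\max\{1,N/p_1\}$.
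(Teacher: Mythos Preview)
Your overall scheme---Caccioppoli estimate, Moser iteration for positive and negative powers, and a John--Nirenberg step---matches the paper's. The handling of the $K_2|\nabla u|^{p(x)-1}$ term by Young's inequality is also essentially what the paper does in Lemma~\ref{lemma-cacciopoli-elliptic}. However, your very first step creates a gap that prevents you from reaching the stated conclusion.

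By invoking $u\le M_0$ to fold the zero-order terms $C_i|u|^{p(x)-1}$ and $C_0|u|^{p(x)}$ into the data $F,G_0,G_1$, your new data satisfy, e.g., $\|F\|_{L^{q_2}}\le \|f\|_{L^{q_2}}+(M_0^{p_+^{4R}-1}+1)\|C_2\|_{L^{q_2}}$. Consequently your additive correction $k\simeq R(1+\mu)$, and hence the final constant, acquires a dependence on $M_0$ through a power like $M_0^{(p_+^{4R}-1)/(p_-^{4R}-1)}$, not merely through the structure functions. This contradicts the precise dependence listed in the theorem (only $\|C_i\|_{L^{q_i}}$, $M^{p_+^{4R}-p_-^{4R}}$, $\mu^{p_+^{4R}-p_-^{4R}}$, etc.) and, in particular, the last paragraph: when the structure functions are independent of $M_0$, neither $C$ nor $\mu$ may depend on the $L^\infty$ norm of $u$. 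Your own stated goal for the bookkeeping (``dependence on $u$ enters only through $M^{p_+^{4R}-p_-^{4R}}$ and $\mu^{p_+^{4R}-p_-^{4R}}$'') is already unreachable after this first reduction.

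The paper avoids this by \emph{not} absorbing the $C_i$-terms. It rescales $\bar u(x)=1+\mu+u(Rx)/R$, $\bar p(x)=p(Rx)$, and observes that, since $u(Rx)\le R\bar u(x)$, a term such as $R\,C_2(Rx)\,u(Rx)^{p(Rx)-1}$ is bounded by $R^{p(Rx)}C_2(Rx)\,\bar u(x)^{\bar p(x)-1}$. The point is that $\|R^{p(Rx)}C_2(Rx)\|_{L^{q_2}(B_4)}\le C\|C_2\|_{L^{q_2}(B_{4R})}$ precisely because $q_2>N/p_1$ (and analogously $q_0,q_1>N/(p_1-1)$ for the terms coming from $A$). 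Thus after rescaling, $\bar u$ satisfies the hypotheses of the Caccioppoli and iteration lemmas with coefficients $H_i$, $G_j$ whose norms are controlled by $\|C_i\|_{L^{q_i}}$, $\|K_j^{p}\|$ alone---no $M_0$ factor---while the inhomogeneous data $f,g_0,g_1$ are normalized and enter only through $\mu$. To repair your argument you must keep the $C_i u^{p(x)-1}$ terms explicit in the Caccioppoli inequality (as in Lemma~\ref{lemma-cacciopoli-elliptic}) and carry them through the iteration, rather than collapsing them into $M_0$-dependent constants at the outset.
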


%

\medskip

As usual, from Harnack inequality we get H\"older continuity of bounded weak solutions  (Corollary~\ref{smoothness-elliptic}).

\medskip

Let us remark that in this paper we prove that solutions to \eqref{p(x)-f} with $f\in L^{q_0(x)}(\Omega)$ with $q_0\in C(\Omega)$ and $\max\{1,\frac N{p(x)}\}<q_0(x)$ in $\Omega$ are locally bounded (Proposition \ref{local bounds}). In the case of equation \eqref{eq-general-intro}, if the functions in the structure conditions are independent of $M_0$, the local boundedness of weak solutions also holds (see Remark \ref{local bounds-elliptic}).

For solutions of \eqref{p(x)-f} with $f\in L^{q_0(x)}(\Omega)$, with $q_0$ as above,
we also get local H\"older continuity with constant and exponent depending only on the compact subset, $p(x),q_0(x), \||f|^{q_0(x)}\|_{L^1(\Omega)}$ and $\||u|^{p(x)}\|_{L^1(\Omega)}^{p_2-p_1}$ (Corollary \ref{holder-q(x)}).

 With the same ideas, a similar result can be obtained for solutions to \eqref{eq-general-intro} although we do not state this result.

\medskip

On the other hand, if we replace the structure condition (3) by
\begin{enumerate}
\item[(3')] $\big|B(x,s,\xi)\big|\le f(x)+C_2(x)|s|^{p(x)-1}+K_2(x)|\xi|^{p(x)-1}+b|\xi|^{p(x)}$
\end{enumerate}
with $b\in\R_{>0}$, we obtain Harnack inequality for bounded weak solutions (Theorem \ref{harnack-elliptic-alt}). In this case, the constant in Harnack inequality depends also on $bM_0$ where $M_0$ is a bound of $u$.

Again under the structure condition (3'), deduce that if $u$ is a bounded weak solution then, $u$ is locally H\"older continuous (Corollary \ref{smoothness-elliptic-alt}).

\bigskip

Finally, let us observe that even for the simplest homogeneous equation \eqref{p(x)-f} with $f\equiv0$, Harnack inequality does not imply the strong maximum principle which, in the case of $p$ constant states that a nonnegative weak solution that vanishes at a point of a connected set must be identically zero. Therefore, a  proof of this principle that does not make use of Harnack inequality is needed. For the case of $p$ constant, an alternative proof was produced in \cite{V}. We adapt this proof for the variable exponent case in Section 4. We also prove a boundary Hopf lemma.
For the sake of simplicity, we restrict ourselves to the $p(x)-$laplacian.
\section*{\bf Notation and assumptions}

\medskip

Throughout the paper $N$ will denote the spatial dimension.

\subsection*{Assumptions on $p(x)$}
We will assume that the function $p(x)$ verifies
\begin{equation}\label{pminmax}
1<p_{1}\le p(x)\le p_{2}<\infty,\qquad x\in\Omega.
\end{equation}

When we are restricted to a ball $B_r$ we use $p_-^r = p_-(B_r)$ and $p_+^r = p_+(B_r)$ to denote the infimum and the supremum of $p(x)$ over $B_r$.

We also assume that $p(x)$ is continuous up to the boundary and that it has a modulus of continuity $\omega_R:\R \to \R$, i.e.
$|p(x)-p(y)|\leq \omega_R(|x-y|)$ if $x,y\in B_R(x_0)\subset\Omega$ . We will assume that
\[
\omega_R(r)=\frac{C_R}{\big|\log r\big|}\quad\mbox{for}\quad0<r\le1/2
\]
and will refer to such a $\omega_R$ as a log-H\"older modulus of continuity of $p$ in $B_R(x_0)$.

Observe that $p$ log-H\"older continuous implies that
\[
r^{-(p_+^{r}-p_-^r)}\le K_R\quad\mbox{for}\quad0<r\le R
\]
for a constant $K_R$ related to $C_R$.

This fact will be used throughout the paper.

We will say that $p$ is log-H\"older continuous in $\Omega$ if $\omega_R$ is independent of the ball $B_R(x_0)\subset \Omega$.

\subsection*{Definition of weak solution}

Let $1<p_1\le p(x)\le p_2<\infty$ in $\Omega$.

The space $L^{p(x)}(\Omega)$ stands for the set of measurable functions $u$ such that
$|u(x)|^{p(x)}\in L^1(\Omega)$. This is a Banach space with norm
$$
\|u\|_{L^{p(\cdot)}(\Omega)} = \|u\|_{p(\cdot)}  = \inf\Big\{\lambda > 0: \int_\Omega\Big(\frac {|u(x)|}\lambda\Big)^{p(x)}\,dx\leq 1 \Big\}.
$$

The dual space of $L^{p(x)}(\Omega)$ is $L^{p'(x)}(\Omega)$ with $\frac1{p(x)}+\frac1{p'(x)}=1$ for $x\in \Omega$ and duality pairing $\int_\Omega fg\,dx$.

Then, we let $W^{1,p(\cdot)}(\Omega)$ denote the space of measurable functions $u$ such that $u$ and the distributional derivative $\nabla u$ are in $L^{p(\cdot)}(\Omega)$. The norm
$$
\|u\|_{1,p(\cdot)}:= \|u\|_{p(\cdot)} + \| |\nabla u| \|_{p(\cdot)}
$$
makes $W^{1,p(\cdot)}$ a Banach space.

We call $W^{1,p(\cdot)}_0(\Omega)$ the closure in the norm of $W^{1,p(\cdot)}$ of the set of those functions in $W^{1,p(\cdot)}(\Omega$ that have compact support in $\Omega$. When $p$ is log-H\"older continuous, it coincides with the closure of $C_0^\infty(\Omega)$.

Observe that $u\in W^{1,p(\cdot)}$ implies that $|\nabla u|^{p(x)-2}\nabla u\in \big(L^{p'(x)}\big)^N$.

For more definitions and results on these spaces we refer to \cite{DHHR} and \cite{KR}.

\begin{defi} We say that $u$ is a weak solution to \eqref{eq-general-intro} if $u\in W^{1,p(x)}(\Omega)$ and, for every $\phi\in L^\infty(\Omega)\cap W^{1,p(x)}_0(\Omega)$ there holds that
\[
\int A(x,u(x),\nabla u(x))\cdot\nabla \phi(x)\,dx=\int B(x,u(x),\nabla u(x))\phi(x)\,dx.
\]

\end{defi}

%
%
%
%
%
%
%
%


\end{section}

\bigskip

\begin{section}{Harnack inequality for solutions to $\Delta_{p(x)}u=f$}
\label{sect-p-laplace}
 In this section we will prove the following result.
%
%
%

\begin{theo}\label{har} Assume that
$p$ is locally log-H\"older continuous in $\Omega$. Let $x_0\in\Omega$ and $0<R\le1$  is such that $\overline{B_{4R}(x_0)}\subset\Omega$. There exists $C$ such that,
 if  $u$ is a nonnegative  weak solution of the
problem
\begin{equation}
\Delta_{p(x)} u=f \mbox{
in }\Omega,
\end{equation}
with $f\in L^{q_0}(\Omega)$ for some $\max\{1,\frac N{p_-^{4R}}\}<q_0\le\infty$, then,
\begin{equation}\label{har-ineq}
\sup_{{B_{R}}}u\leq C\Big[\inf_{{B_{R}}}u+R+R\mu\Big]
\end{equation}
where
\[
\mu=\big[R^{1-\frac N{q_0}}\|f\|_{L^{q_0}(B_{4R}(x_0))}\big]^{\frac1{p_-^{4R}-1}}.
\]

The constant $C$ depends only on $N$, $p_-^{4R}$,
$p_+^{4R}$, $s$, $q_0$, $\omega_{4R}$,  $\mu^{{p_+^{4R}-p_-^{4R}}}$, $\|u\|_{L^{sq'}(B_{4R}(x_0))}^{p_+^{4R}-p_-^{4R}}$, \linebreak $\|u\|_{L^{sr_0}(B_{4R}(x_0))}^{p_+^{4R}-p_-^{4R}}$ $($for certain $q'=\frac q{q-1}$ with $r_0, q\in(1,\infty)$ and $\frac1{q_0}+\frac1{q}+\frac1{r_0}=1$ depending on $N,q_0$ and $p_-^{4R}$$)$.
Here
$s\ge p_+^{4R}-p_-^{4R}$ is arbitrary and $\omega_{4R}$ is the modulus of  log-H\"older continuity of $p$
in $B_{4R}(x_0)$.
\end{theo}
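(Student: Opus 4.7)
I would follow the Moser iteration scheme adapted to variable exponent spaces. Set $k := R + R\mu$ and work with the shifted function $v := u + k$, which is bounded away from zero and absorbs the inhomogeneity in a scale-invariant way (this is Serrin's trick adapted here so that the $L^{q_0}$ norm of $f$ enters precisely through the power of $\mu$ appearing in the statement). The plan has three parts: (i) a one-sided ``sup bound'' $\sup_{B_r}v\le C(\fint_{B_{2r}}v^{s_0})^{1/s_0}$ for every $s_0>0$, coming from Caccioppoli plus iteration with positive powers; (ii) an ``inf bound'' $\inf_{B_r}v\ge c(\fint_{B_{2r}}v^{-s_0})^{-1/s_0}$ for small $s_0>0$, obtained by iteration with negative powers; (iii) a crossover between positive and negative exponents via a BMO-type logarithmic estimate combined with John--Nirenberg.

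\textbf{Caccioppoli and the sup bound.} Testing the weak equation with $\phi=\eta^{p_+^{4R}}v^{\beta}$ for $\beta>0$ and a smooth cutoff $\eta$ supported in a ball $B_{r_2}$ with $\eta\equiv 1$ on $B_{r_1}$ produces, after the usual Young inequality absorption and discarding the good terms with $|\nabla v|^{p(x)}$, an estimate of the form
\[
\int \eta^{p_+^{4R}}|\nabla v^{(\beta+p(x)-1)/p(x)}|^{p(x)}\,dx \le C\int\Big(\frac{|\nabla\eta|}{r_2-r_1}\Big)^{p(x)}v^{\beta+p(x)-1}\,dx+\text{RHS terms},
\]
where the RHS terms are controlled by $\|f\|_{L^{q_0}(B_{4R})}$ combined with the choice of $k$ (so they are absorbed into a constant depending on $\mu$). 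Freezing $p$ to $p_-^{4R}$ via $v^{p(x)-p_-^{4R}}\le (1+\|v\|_\infty)^{p_+^{4R}-p_-^{4R}}$ --- this is exactly where the log-Hölder continuity ($r^{p_+^r-p_-^r}\le K$) and the dependence of $C$ on $\|u\|^{p_+^{4R}-p_-^{4R}}_{L^{sq'}}$ and $\|u\|^{p_+^{4R}-p_-^{4R}}_{L^{sr_0}}$ enters --- I can apply Sobolev embedding for $W^{1,p_-^{4R}}$ and iterate on a geometric sequence of radii $R\le r_n\searrow R$. The standard Moser feedback then gives $\sup_{B_R}v\le C\big(\fint_{B_{2R}}v^{s_0}\big)^{1/s_0}$ for every $s_0>0$, with constant as described.

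\textbf{Log-estimate and inf bound.} To go below, I test with $\phi=\eta^{p_+^{4R}}v^{\beta}$ for $\beta<0$; the same Caccioppoli manipulations, now with Sobolev and iteration on a shrinking sequence of balls converging to $B_R$, yield $\inf_{B_R}v\ge c(\fint_{B_{2R}}v^{-s_0})^{-1/s_0}$ for some small $s_0>0$ depending on $N,p_-^{4R},p_+^{4R},q_0$. To connect the positive and negative moments of $v$, I test with $\phi=\eta^{p_+^{4R}}v^{1-p_-^{4R}}$, which produces a Caccioppoli inequality for $w:=\log v$ of the form $\fint_{B_r}|\nabla w|^{p_-^{4R}}\le C r^{-p_-^{4R}}$. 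A Poincaré inequality then tells us $w$ is in BMO with controlled seminorm, and John--Nirenberg gives a small exponent $s_0$ for which $\fint v^{s_0}\cdot\fint v^{-s_0}\le C$. Combining this with the two one-sided estimates yields $\sup_{B_R}v\le C\inf_{B_R}v$, which rewrites into \eqref{har-ineq} upon substituting back $v=u+k$.

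\textbf{Main obstacle.} The hardest bookkeeping will be the third step: tracking exactly how the difference $p_+^{4R}-p_-^{4R}$ appears in each absorption of a factor $v^{p(x)-p_-^{4R}}$, so that at the end only norms of $u$ raised to the power $p_+^{4R}-p_-^{4R}$ (which are controlled by log-Hölder continuity) enter the constant. The condition $q_0>\max\{1,N/p_-^{4R}\}$ is Serrin's sharp condition --- it is what lets the $f$-term be dominated by $R\mu$ with the shown exponent $1/(p_-^{4R}-1)$ --- and I expect that this will appear naturally once the Caccioppoli inequality is estimated using Hölder with exponents $q_0,q,r_0$ satisfying $1/q_0+1/q+1/r_0=1$, exactly the triple referenced in the statement.
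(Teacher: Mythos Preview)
Your outline is essentially the paper's proof: Caccioppoli inequality, Moser iteration for positive and negative powers, and a BMO/John--Nirenberg crossover for $\log v$. The paper packages this by first rescaling $x\mapsto x_0+Rx$ and shifting to $\bar u(x)=1+\mu+u(x_0+Rx)/R$, so the equation on $B_4$ becomes $|\Delta_{\bar p}\bar u|\le H\,\bar u^{\bar p-1}$ with $\|H\|_{L^{q_0}(B_4)}=1$; the three lemmas (Caccioppoli, sup-bound, weak Harnack) are proved once in this normalized form and then scaled back.

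One correction to your sketch: do not freeze the exponent via $v^{p(x)-p_-^{4R}}\le(1+\|v\|_\infty)^{p_+^{4R}-p_-^{4R}}$, since that produces dependence on $\|u\|_{L^\infty}$ rather than on $\|u\|_{L^{sq'}}^{p_+^{4R}-p_-^{4R}}$ and $\|u\|_{L^{sr_0}}^{p_+^{4R}-p_-^{4R}}$ as claimed. The paper instead splits each integrand as $v^{\gamma-1+p_-}\cdot v^{p(x)-p_-}$ (times $H$ or $|\nabla\eta|^{p(x)}$) and applies H\"older with the triple $(q_0,q,r_0)$ or the pair $(q,q')$, so the oscillation factor is controlled by $\big(\fint v^{sr_0}\big)^{(p_+-p_-)/sr_0}$ or $\big(\fint v^{sq'}\big)^{(p_+-p_-)/sq'}$; the log-H\"older continuity enters separately, through the bound $\rho^{p_-}\le C\rho^{p_+}$ when passing from the Sobolev scale to the iteration inequality.
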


\medskip

The proof will be a consequence of 3 lemmas

\begin{lemm}[Cacciopoli type estimate]\label{cacciopoli-lemma} Let $u\ge 1$ and bounded such that $\Delta_{p(x)}
u\ge -H(x)u^{p(x)-1}$ in a ball $B$ and $\gamma>0$ or $\Delta_{p(x)}
u\le H(x)u^{p(x)-1}$ in $B$ and $\gamma<0$. Here $ H\ge0$ is a measurable function. Then, for $\eta
\in C_0^\infty(B)$ there holds that,
\begin{equation}\label{cacciopoli}\begin{aligned}
\int_{B}u^{\gamma-1}|\nabla u|^{p_-}\eta^{p_+}&\le\int_{B} u^{\gamma-1}\eta^{p_+}
+C|\gamma|^{-p_+}\int_{B}u^{\gamma+p(x)-1}\eta^{p_+-p(x)}|\nabla\eta|^{p(x)}
\\&+C|\gamma|^{-1}\int_{B} H(x)u^{\gamma+p(x)-1}\eta^{p_+}\end{aligned}
\end{equation}
with $C=C(p_+,p_-)$. Here $p_+=\max_{\overline B}p$, $p_-=\min_{\overline B} p$.
\end{lemm}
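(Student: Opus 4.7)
The natural move is to test the distributional inequality with $\phi=u^\gamma \eta^{p_+}$. Since $u$ is bounded and $u\ge 1$, this $\phi$ lies in $L^\infty(B)\cap W^{1,p(x)}_0(B)$ for either sign of $\gamma$, and $\phi\ge 0$, so it is admissible for $\Delta_{p(x)}u\ge -Hu^{p(x)-1}$ when $\gamma>0$ and for $\Delta_{p(x)}u\le Hu^{p(x)-1}$ when $\gamma<0$. Expanding $\nabla\phi=\gamma u^{\gamma-1}\eta^{p_+}\nabla u+p_+ u^{\gamma}\eta^{p_+-1}\nabla\eta$ and writing out each weak formulation, the sign of $\gamma$ combines with the sign convention in the distributional inequality so that both cases reduce to the single estimate
\[
|\gamma|\int_B u^{\gamma-1}|\nabla u|^{p(x)}\eta^{p_+}\,dx\le p_+\int_B u^{\gamma}|\nabla u|^{p(x)-1}\eta^{p_+-1}|\nabla \eta|\,dx+\int_B H\,u^{\gamma+p(x)-1}\eta^{p_+}\,dx.
\]

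Next I would apply Young's inequality pointwise with conjugate exponents $p(x)$ and $p'(x)$ to the first integral on the right, splitting the integrand as $X\cdot Y$ with $X^{p(x)/(p(x)-1)}=u^{\gamma-1}|\nabla u|^{p(x)}\eta^{p_+}$ and $Y^{p(x)}=u^{\gamma+p(x)-1}\eta^{p_+-p(x)}|\nabla \eta|^{p(x)}$. Choosing the Young parameter of order $(|\gamma|/p_+)^{(p(x)-1)/p(x)}$ lets the $X$-term be absorbed into the left-hand side with coefficient $|\gamma|/2$, while the $Y$-term picks up a constant bounded by $C|\gamma|^{-(p_+-1)}$. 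Dividing through by $|\gamma|/2$ then produces
\[
\int_B u^{\gamma-1}|\nabla u|^{p(x)}\eta^{p_+}\,dx\le C|\gamma|^{-p_+}\int_B u^{\gamma+p(x)-1}\eta^{p_+-p(x)}|\nabla \eta|^{p(x)}\,dx+C|\gamma|^{-1}\int_B H\,u^{\gamma+p(x)-1}\eta^{p_+}\,dx,
\]
which is \eqref{cacciopoli} except that the left-hand side has the exponent $p(x)$ rather than $p_-$. To bridge this gap I split the domain: on $\{|\nabla u|\ge 1\}$ one has $|\nabla u|^{p_-}\le |\nabla u|^{p(x)}$, and on $\{|\nabla u|<1\}$ one has $|\nabla u|^{p_-}\le 1$, so
\[
\int_B u^{\gamma-1}|\nabla u|^{p_-}\eta^{p_+}\,dx\le \int_B u^{\gamma-1}|\nabla u|^{p(x)}\eta^{p_+}\,dx+\int_B u^{\gamma-1}\eta^{p_+}\,dx,
\]
which combined with the previous display yields the claimed inequality.

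The main delicacy is bookkeeping in the Young step: a naive choice of the small parameter gives only $|\gamma|^{-(p_+-1)}$ in front of the $|\nabla\eta|^{p(x)}$ term, and it is the absorption into the $|\gamma|$-factor already sitting on the left that upgrades the exponent to $|\gamma|^{-p_+}$ exactly. A minor point is that Young is being applied with an $x$-dependent pair of exponents, so one has to remark that $(p(x)-1)/p(x)$ and $1/p(x)$ are uniformly bounded between $1/p_+$ and $(p_--1)/p_-$, keeping all absolute constants dependent only on $p_\pm$, as the statement requires.
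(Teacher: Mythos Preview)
Your argument is correct and matches the paper's own proof essentially step for step: the test function $\phi=u^\gamma\eta^{p_+}$, Young's inequality with an $|\gamma|$-scaled parameter to absorb the cross term, and the final passage from $|\nabla u|^{p(x)}$ to $|\nabla u|^{p_-}$ via $|\nabla u|^{p_-}\le 1+|\nabla u|^{p(x)}$ are exactly what the paper does. Your unified treatment of both signs of $\gamma$ in a single displayed inequality is slightly cleaner than the paper's separate handling, but the content is identical.
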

\begin{proof}
 As is usual in the proof of these type of estimates we take as a test
function $u^\gamma\eta^{p_+}\in W^{1,p(x)}_0(\Omega)$ since $u\in W^{1,p(x)}(\Omega)$ and we are assuming that $u\in L^\infty(\Omega)$.

Assume first that $\Delta_{p(x)}
u\ge -H(x)u^{p(x)-1}$ and $\gamma>0$. We get,
\[\begin{aligned}
\gamma\int u^{\gamma-1}\eta^{p_+}|\nabla u|^{p(x)}&\le
-p_+\int u^\gamma\eta^{p_+-1}
|\nabla u|^{p(x)-2}\nabla u\cdot\nabla\eta
+\int H(x)u^{\gamma+p(x)-1}\eta^{p_+}\\
&\le
 \ep p_+\int\frac1{p'(x)}
|\nabla u|^{p(x)}u^{\gamma-1}\eta^{p_+}\\&+
\int\frac{p_+}{\ep^{p(x)-1}p(x)}u^{\gamma+p(x)-1}\eta^{p_+-p(x)}|\nabla\eta|^{p(x)}
+\int H(x)u^{\gamma+p(x)-1}\eta^{p_+}
\end{aligned}\]
where $0<\ep\le 1$ is to be chosen and $\frac1{p(x)}+\frac1{p'(x)}=1$.

Now, we choose $\ep=\min\{1,\frac\gamma{2(p_+-1)}\}$ so that $\frac {\ep p_+}{p'(x)}\le \frac \gamma2$, $\frac{p_+}{\ep^{p(x)-1}p(x)}\le C(p_+,p_-)\gamma^{-p_++1}$ and, in order to get
\eqref{cacciopoli} we bound,
\[
\int u^{\gamma-1}\eta^{p_+}|\nabla u|^{p_-}\le \int u^{\gamma-1}\eta^{p_+}
+\int u^{\gamma-1}\eta^{p_+}|\nabla u|^{p(x)}.
\]

Now, if $\Delta_{p(x)}u\le H(x)u^{p(x)-1}$ and $\gamma<0$, since $u\ge 1$ we can proceed as before and we get,
\[\begin{aligned}
\gamma\int u^{\gamma-1}\eta^{p_+}|\nabla u|^{p(x)}&\ge -p_+
\int u^\gamma\eta^{p_+-1}
|\nabla u|^{p(x)-2}\nabla u\cdot\nabla\eta
-\int H(x)u^{\gamma+p(x)-1}\eta^{p_+}
\end{aligned}\]
Dividing by $\gamma$ we get,
\[\begin{aligned}
\int u^{\gamma-1}\eta^{p_+}|\nabla u|^{p(x)}&\le Cp_+|\gamma|^{-p_+}
\int u^\gamma\eta^{p_+-1}
|\nabla u|^{p(x)-2}\nabla u\cdot\nabla\eta
+C|\gamma|^{-1}\int H(x)u^{\gamma+p(x)-1}\eta^{p_+}
\end{aligned}\]

Now the proof continues as before and we obtain \eqref{cacciopoli}.
\end{proof}

\begin{lemm}\label{subhar} Let $p$ log-H\"older continuous in $B_4$. Let $u\ge1$ be bounded and such that $\Delta_{p(x)}u\ge -H(x)u^{p(x)-1}$ in $B_4$ where $0\le H\in L^{q_0}(B_4)$ with $\max\{1,\frac N{p_-^4}\}<q_0\le\infty$. Let  $t>0$. Then, for every $0<\rho_1<\rho_2\le4$ there holds that,
\begin{equation}\label{eq-subhar}
\sup_{B_{\rho_1}}u\le C\Big(\frac{\rho_2}{\rho_2-\rho_1}\Big)^C
\Big(\fint_{B_{\rho_2}}u^t\Big)^{1/t}.
\end{equation}

 The constant $C$  depends  only on  ${s,p_+^4,p_-^4,M^{p_+^4-p_-^4}}$, $\omega_4$, $ \| H(x)\|_{L^{q_0}(B_4)}$, $q_0$, $q$ and $t$. Here $M=\big(\fint_{B_4} u^{sq'}\big)^{sq'}+
 \big(\fint_{B_4} u^{sr_0}\big)^{sr_0}$ with $r_0,q'\in(1,\infty) $ depending on $q_0, p_-^4,N$ and $s\ge p_+^4-p_-^4$ is arbitrary.
\end{lemm}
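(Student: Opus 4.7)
The plan is a Moser iteration starting from the Caccioppoli estimate \eqref{cacciopoli} of Lemma \ref{cacciopoli-lemma} applied with positive exponents $\gamma > 0$ that will eventually form a geometric sequence. Fix radii $\rho_1 < \rho_2 \leq 4$ and take a cutoff $\eta \in C_0^\infty(B_{\rho_2})$ with $\eta \equiv 1$ on $B_{\rho_1}$, $0 \leq \eta \leq 1$ and $|\nabla \eta| \leq 2/(\rho_2 - \rho_1)$. Set $\beta := \gamma + p_-^4 - 1$ and $v := u^{\beta/p_-^4}$, so that $u^{\gamma - 1}|\nabla u|^{p_-^4}$ is a multiple of $|\nabla v|^{p_-^4}$; then \eqref{cacciopoli} is transformed into an energy estimate controlling $\int |\nabla(v \eta^{p_+^4/p_-^4})|^{p_-^4}$ in terms of zero-order integrals of $u$ and $H$.

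The key technical point is controlling the variable-exponent powers on the right-hand side of \eqref{cacciopoli}. Using $u \geq 1$, I split $u^{p(x) + \gamma - 1} = u^{\beta} \cdot u^{p(x) - p_-^4}$; since $p(x) - p_-^4 \leq p_+^4 - p_-^4 \leq s$, the second factor is dominated by $u^{p_+^4 - p_-^4}$, which is pulled out via Hölder's inequality and produces a factor proportional to $\|u\|_{L^{sq'}(B_4)}^{p_+^4 - p_-^4}$ or $\|u\|_{L^{sr_0}(B_4)}^{p_+^4 - p_-^4}$, depending on whether the term comes from the $|\nabla \eta|^{p(x)}$ piece or the $H$ piece. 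For the $H$-term a triple Hölder with exponents $q_0, q, r_0$ satisfying $1/q_0 + 1/q + 1/r_0 = 1$ is used: the hypothesis $q_0 > N/p_-^4$ leaves just enough room so that the $L^q$-factor, acting on $u^{\beta}\eta^{p_+^4}$, can be absorbed by the Sobolev embedding at exponent $p_-^4$. The log-Hölder continuity enters through the uniform bound $r^{-(p_+^4 - p_-^4)} \leq K_4$ on small balls, which keeps the $|\nabla \eta|^{p(x)}$ term under control.

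Applying the Sobolev embedding $W^{1,p_-^4}_0 \hookrightarrow L^{(p_-^4)^*}$ (with the standard modification when $p_-^4 \geq N$) to $v\eta^{p_+^4/p_-^4}$ and combining with the preceding estimates yields a reverse-Hölder inequality of the form
\[
\Big(\fint_{B_{\rho_1}} u^{\kappa \beta}\Big)^{1/(\kappa\beta)} \leq \Big[\frac{C}{(\rho_2 - \rho_1)^{C_0}}\Big]^{1/\beta} \Big(\fint_{B_{\rho_2}} u^{\beta}\Big)^{1/\beta},
\]
with $\kappa := (p_-^4)^*/p_-^4 > 1$ and constants tracked in terms of the quantities listed in the statement. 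Iterating this estimate on a geometric sequence of radii $\rho_n \searrow \rho_1$ and exponents $\beta_n = \beta_0 \kappa^n$, the usual Moser telescoping yields $\sup_{B_{\rho_1}} u \leq C(\rho_2/(\rho_2 - \rho_1))^C \bigl(\fint_{B_{\rho_2}} u^{\beta_0}\bigr)^{1/\beta_0}$ for a fixed $\beta_0 > 0$. The passage from $\beta_0$ to an arbitrary prescribed $t > 0$ is then accomplished by the classical interpolation-and-absorption trick of Gilbarg--Trudinger, giving \eqref{eq-subhar}.

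The main obstacle is the bookkeeping of constants: one must verify that the factors generated at each iteration level remain summable so that the telescoped product converges, and that the auxiliary exponents $q, r_0$ can be chosen compatibly with both the Sobolev step and the strict inequality $q_0 > N/p_-^4$. It is precisely this balance that forces the dependence of the constant on $\|u\|_{L^{sq'}(B_4)}^{p_+^4 - p_-^4}$, $\|u\|_{L^{sr_0}(B_4)}^{p_+^4 - p_-^4}$, $\|H\|_{L^{q_0}(B_4)}$, and the log-Hölder modulus $\omega_4$, matching the hypotheses of the lemma.
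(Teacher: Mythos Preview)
Your overall strategy---Moser iteration from the Caccioppoli estimate, triple H\"older $(q_0,q,r_0)$ for the $H$-term, and separating $u^{p(x)-p_-}$ from $u^\beta$ via H\"older to produce the factor $M^{p_+^4-p_-^4}$---is exactly the route taken in the paper. Two points, however, are not right as written and would break the argument if carried through literally.

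First, your use of the global exponents $p_\pm^4$ together with the claim ``$r^{-(p_+^4-p_-^4)}\le K_4$'' is incorrect: if $p$ is nonconstant on $B_4$ then $p_+^4-p_-^4>0$ is a fixed number and $r^{-(p_+^4-p_-^4)}\to\infty$ as $r\to0$. What log-H\"older continuity actually gives is $r^{-(p_+^r-p_-^r)}\le K$, i.e.\ the bound uses the \emph{local} oscillation on the ball of radius $r$. Accordingly, the paper applies the Caccioppoli estimate and the Sobolev step on each ball $B_\rho$ with $p_\pm=p_\pm^\rho$, and only at that stage does the crucial cancellation $\rho^{p_-}\cdot(\rho-\sigma)^{-p_+}\le C\,\bigl(\rho/(\rho-\sigma)\bigr)^{p_+}$ go through. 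If you freeze $p_\pm$ at $p_\pm^4$ this step fails.

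Second, the reverse-H\"older inequality you display cannot come out of the argument you describe. The H\"older splitting you perform on both the $|\nabla\eta|^{p(x)}$ term and the $H$-term places $u^\beta$ in $L^q$, so the right-hand side carries $\bigl(\fint_{B_\rho}u^{q\beta}\bigr)^{1/q}$, not $\bigl(\fint_{B_\rho}u^\beta\bigr)$. There is no ``absorption by the Sobolev embedding'' here; rather, one must choose $q<\kappa$ (which is where the hypothesis $q_0>N/p_-^4$ enters) and rewrite the step with $\bar\kappa=\kappa/q>1$ and $\bar\beta=q\beta$, so that the iteration gain is $\bar\kappa$, not $\kappa$. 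This is what the paper does, and the telescoping then proceeds with $\bar\beta_j=\bar\kappa^j t$. For the passage to small $t$ the paper invokes the extrapolation Lemma~\ref{HKM} (Lemma~3.38 in \cite{HKM}); your Gilbarg--Trudinger interpolation-and-absorption would also work, but either way you first need the iteration for $t>q(p_+^4-1)$.
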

\begin{proof} We use Moser's iteration technique and we follow the lines of the proof of Lemma 4.6 in \cite{HHKLM} for the treatment of the variable exponent. In our situation we are more careful with the choice of $\kappa$ below in order to get our  result, due to the presence of a right hand side.

In what follows $p_+$ and $p_-$ stand for the maximum and minimum values of
$p$ in $\overline B_\rho$.

Let $0<\sigma<\rho\le 4$.
 Let $\eta\in C_0^\infty(B_\rho)$ such that $\eta\equiv1$ in $B_\sigma$ and
$|\nabla\eta|\le C\frac1{\rho-\sigma}$.

Let $\kappa=\frac {\hat N}{\hat N-p_-^4}$ with $\hat N=N$ when $N>p_-^4$  and,  $p_-^4<\hat N<q_0p_-^4$ when $N\le p_-^4$.

Then, for $\gamma\ge \gamma_0>0$ using \eqref{cacciopoli}, Sobolev inequality and the fact that $\kappa p_-\le p_-^{*}=\frac{Np_-}{N-p_-}$ when $N>p_-^4$, $u\in W^{1,p_-}(B_\rho)$ and, $W_0^{1,p_-}(B_\rho)\subset L^t(B_\rho)$ continuously for every $1<t<\infty$ when $N\le p_-^4\le p_-$,
\[\begin{aligned}
\Big(&\fint\big(u^{\frac{\gamma-1+p_-}{p_-}}\eta^{p_+/p_-}\big)^{\kappa p_-}\Big)
^{1/\kappa p_-}\le
C\rho \Big(\fint\big|\nabla\big(u^{\frac{\gamma-1+p_-}{p_-}}
\eta^{p_+/p_-}\big)\big|^{ p_-}\Big)
^{1/ p_-}\\
&\le C\frac{\gamma-1+p_-}{p_-}\rho\Big(\fint u^{\gamma-1}
\eta^{p_+}|\nabla u|^{p_-}\Big)^{1/ p_-}
+C\rho\frac{p_+}{p_-}\Big(\fint u^{\gamma-1+p_-}
\eta^{p_+-p_-}|\nabla\eta|^{p_-}\Big)^{1/ p_-}\\
&\le C\rho(1+\gamma)\left[\Big(\fint u^{\gamma-1}\eta^{p_+}\Big)^{1/p_-}+
\Big(\fint u^{\gamma-1+p(x)}\eta^{p_+-p(x)}|\nabla\eta|^{p(x)}\Big)^{1/p_-}\right.\\
&\left.+\Big(\fint H(x)u^{\gamma-1+p(x)}\eta^{p_+}\Big)^{1/p_-}\right]+C\rho\Big(\fint
u^{\gamma-1+p_-}\eta^{p_+-p_-}|\nabla\eta|^{p_-}\Big)^{1/p_-}.
\end{aligned}
\]
Here the constant $C$ depends on $p_+^4,p_-^4$ and $\gamma_0$. 

Since, by the choice of $\hat N$ there holds that $q_0>\frac {\hat N}{p_-^4}$, there exists $1<q<\frac {\hat N}{\hat N-p_-^4\mathrm{}}$ such that
$\frac1 q+\frac1{q_0}<1$. Let $r_0\in(1,\infty)$ given by $\frac1 q+\frac 1{q_0}+\frac 1{r_0}=1$.
Now we bound,
\[
\fint u^{\gamma-1}\eta^{p_+}\le \fint u^{\gamma-1+p_-}\eta^{p_+}\le \Big(\fint u^{(\gamma-1+p_-)q}\eta^{qp_+}\Big)^{1/q}
\le C\Big(\frac1{\rho-\sigma}\Big)^{p_+}\Big(\fint_{B_\rho} u^{(\gamma-1+p_-)q}\Big)^{1/q}
 \]
since $\eta\le 1\le \frac4{(\rho-\sigma)}$. And, with $M_1=\Big(\fint_{B_4}u^{sq'}\Big)^{1/sq'}$, $q'=\frac q{q-1}$, $s\ge p_+-p_-$,
\[
\begin{aligned}
&\fint u^{\gamma-1+p(x)}\eta^{p_+-p(x)}|\nabla\eta|^{p(x)}\le
C\Big(\frac1{\rho-\sigma}\Big)^{p_+}\fint_{B_\rho}  u^{\gamma-1+p-}u^{p(x)-p_-}\\
&\le C\Big(\frac1{\rho-\sigma}\Big)^{p_+}\Big(\fint_{B_\rho}  u^{(\gamma-1+p_-)q}\Big)^{1/q}
\Big(\fint_{B_\rho}  u^{(p(x)-p_-)q'}\big)^{1/q'}\\
&\le C\Big(\frac1{\rho-\sigma}\Big)^{p_+} M_1^{p_+-p_-}\Big(\fint_{B_\rho}  u^{(\gamma-1+p_-)q}\Big)^{1/q}.
\end{aligned}
\]
Similarly,
\[\fint
u^{\gamma-1+p_-}\eta^{p_+-p_-}|\nabla\eta|^{p_-}\le
C\Big(\frac1{\rho-\sigma}\Big)^{p_+}\Big(\fint_{B_\rho}  u^{(\gamma-1+p_-)q}\Big)^{1/q}.
\]
Finally, with $M_2=\big(\fint_{B_4}u^{sr_0}\big)^{1/sr_0}$, $s\ge p_+-p_-$,
\[
\begin{aligned}
&\fint H(x)u^{\gamma-1+p(x)}\eta^{p_+}\le  \fint_{B_\rho}  H(x)u^{\gamma-1+p_-}u^{p_+-p_-} \\ &\le\Big(\fint_{B_\rho}  H(x)^{q_0}\Big)^{1/q_0}\Big(\fint_{B_\rho}  u^{(\gamma-1+p_-)q}\Big)^{1/q}\Big(\fint_{B_\rho}
u^{r_0(p_+-p_-)}\Big)^{1/r_0}\\
&\le CM_2^{p_+-p_-}\Big(\frac1{\rho-\sigma}\Big)^{p_+} \Big(\fint_{B_\rho}  u^{(\gamma-1+p_-)q}\Big)^{1/q}
\end{aligned}
\]
with $C$ depending on $q_0,p_+, p_-$  and $ \|H\|_{L^{q_0}(B_4)}$. In fact,
$\rho^{-\frac{N}{q_0}}\le C\rho^{-p_-^4}\le C\rho^{-p_-}\le C\rho^{-p_+}\le (\rho-\sigma)^{-p_+}$.

  Since $M=M_1+M_2\ge 1$  we conclude that,
  \[
\begin{aligned}
\Big(&\fint\big(u^{\frac{\gamma-1+p_-}{p_-}}\eta^{p_+/p_-}\big)^{\kappa p_-}\Big)
^{1/\kappa p_-}&\le C\rho(1+\gamma)\frac{M^{\frac{p_+}{p_-}-1}}{(\rho-\sigma)^{
p_+/p_-}} \Big(\fint_{B_\rho} u^{(\gamma-1+p_-)q}\Big)^{1/qp_-}
\end{aligned}
\]
with $C$ depending on $q_0,p_+^4,p_-^4$,  $ \| H(x)\|_{L^{q_0}(B_4)}$ and $\gamma_0$.

Let us now take $\beta>p_--1$. Then, $\beta=\gamma-1+p_-$ with $\gamma=\beta-(p_--1)>0$. Recalling that
$\rho^{p_-}\le C\rho^{p_+}$ for a constant $C$ that depends only on the log-H\"older
continuity of $p$,
\begin{equation}\label{beta}
\Big(\fint_{B_\sigma}u^{\kappa\beta}\Big)^{1/\kappa}\le C\Big(\frac\rho\sigma\Big)
^{\frac N\kappa}M^{p_+-p_-}\Big(\frac{\rho}{\rho-\sigma}\Big)^{p_+}(1+\beta)^{p_-}
\Big(\fint_{B_\rho}u^{q\beta}\Big)^{1/q}.
\end{equation}

 Let us call
\[
\phi(f,t,E):=\Big(\fint_E|f|^t\Big)^{1/t}.
\]
Then, if $\beta>p_--1$, $s\ge p_+-p_-$, we have for a constant $C$ depending on $p_+^4,p_-^4$,  $ \| H(x)\|_{L^{q_0}(B_4)}$ and $\gamma_0>0$ such that $\beta-(p_--1)\ge \gamma_0$,
\[
\phi(u,\kappa\beta, B_\sigma)\le C^{1/\beta}M^{\frac{p_+-p_-}\beta}(1+\beta)^{p_-/\beta}
\Big(\frac\rho\sigma\Big)^{\frac N{\kappa\beta}}
\Big(\frac\rho{\rho-\sigma}\Big)^{p_+/\beta}\phi(u,q\beta,B_\rho).
\]

And we have a result quite similar to Lemma 4.6 in \cite{HHKLM}. For the sake of completeness we finish the proof.

To this end, we write $\kappa\beta=\bar\kappa \bar\beta$ with $\bar\kappa=\frac\kappa q$ and $\bar\beta=q\beta$.
Recall that, due to the choice of $q$, we have $q<\kappa$. So that, $\bar\kappa>1$ and
\begin{equation}\label{ineq-iterative}
\phi(u,\bar\kappa\bar\beta, B_\sigma)\le C^{q/\bar\beta}M^{\frac{q(p_+-p_-)}{\bar\beta}}(1+\bar\beta)^{qp_-/\bar\beta}
\Big(\frac\rho\sigma\Big)^{\frac N{\bar\kappa\bar\beta}}
\Big(\frac\rho{\rho-\sigma}\Big)^{qp_+/\bar\beta}\phi(u,\bar\beta,B_\rho).
\end{equation}

Let $0<\rho_1<\rho_2\le4$ and let us call $r_j=\rho_1+2^{-j}(\rho_2-\rho_1)$. We will consider \eqref{ineq-iterative} with $\sigma=r_{j+1}$ and $\rho=r_j$. Observe that,
\[\frac\rho\sigma=\frac{r_j}{r_{j+1}}\le 2,\quad\frac\rho{\rho-\sigma}=\frac{r_j}{r_j-r_{j+1}}=
\frac{\rho_1+2^{-j}(\rho_2-\rho_1)}{2^{-(j+1)}(\rho_2-\rho_1)}\le 2^{j+1}\frac{\rho_2}{\rho_2-\rho_1}.
\]
Assume first that $t>q(p_+^4-1)$. Take $\bar\beta_j=\bar\kappa^j t$. There holds that $\bar\beta_j=q\beta_j$ with $\beta_j=\bar\kappa^j\frac tq$. And, $\gamma_j=\beta_j-(p_-^{r_j}-1)\ge \frac tq-(p_+^4-1)=\gamma_0>0$.

 Then, the constant $C$ in every step of the iteration may be taken depending on $\gamma_0$ and independent of $j$.  Thus, we have with $C_0$ depending on ${p_+^4.p_-^4,M^{p_+^4-p_-^4}}$, $\omega_4$, $ \| H(x)\|_{L^{q_0}(B_4)}$, $q_0$ and $t$,
\[\begin{aligned}
\phi&(u,\bar\kappa^{j+1}t, B_{r_{j+1}})\\
&\le C^{qt^{-1}\bar\kappa^{-j}}M^{\frac{q(p_+^4-p_-^4)}{t\bar\kappa^{j}}}
(1+\bar\kappa^jt)^{\bar\kappa^{-j}qt^{-1}p_+^4}\Big(\frac{r_j}
{r_{j+1}}\Big)^{N t^{-1}\bar\kappa^{-(j+1)}}\Big(\frac{r_j}{r_j-r_{j+1}}\Big)^{qp_+^4t^{-1}
\bar\kappa^{-j}}
\phi(u,\bar\kappa^jt, B_{r_j})\\
&\le C_0^{\bar\kappa^{-j}}(1+\bar\kappa^jt)^{\bar\kappa^{-j}qt^{-1}p_+^4}
\big(2^{j+1}\frac{\rho_2}{\rho_2-\rho_1}\Big)
^{qp_+^4t^{-1}\bar\kappa^{-j}}\phi(u,\bar\kappa^jt, B_{r_j}).
\end{aligned}
\]
Iterating this inequality we get,
\[
\begin{aligned}
\phi&(u,\bar\kappa^{j+1}t, B_{r_{j+1}})\\
&\le C_0^{\sum_{i=0}^j\bar\kappa^{-i}}
\Big(\prod_{i=0}^j(1+t\bar\kappa^i)^{t^{-1}\bar\kappa^{-i}}\Big)^{qp_+^4}
\Big(\frac{\rho_2}{\rho_2-\rho_1}\Big)^{qp_+^4t^{-1}\sum_{i=0}^j\bar\kappa^{-i}} \big(2^{qp_+t^{-1}}\big)^{\sum_{i=0}^j(i+1)\bar\kappa^{-i}}\phi(u,t,B_{\rho_2}).
\end{aligned}
\]

Letting $j\to\infty$,
\[
\sup_{B_{\rho_1}}u\le C_0^{\frac1{1-\bar\kappa^{-1}}}
\Big(\prod_{i=0}^\infty(1+t\bar\kappa^i)^{t^{-1}\bar\kappa^{-i}}\Big)^{qp_+^4}
\big(2^{qp_+^4t^{-1}}\big)^{\sum_{i=0}^\infty(i+1)\bar\kappa^{-i}}
\Big(\frac{\rho_2}{\rho_2-\rho_1}\Big)^{qp_+t^{-1}\frac1{1-\bar\kappa^{-1}}}
\Big(\fint_{B_{\rho_2}}u^t\Big)^{1/t}
\]
and the lemma  is proved for $t>q(p_+^4-1)$ since $\prod_{i=0}^\infty(1+t\bar\kappa^i)^{t^{-1}\bar\kappa^{-i}}\le C$.

In order to get the result for $0<t\le q(p_+^4-1)$ we proceed again as in \cite{HHKLM} and use the extrapolation result Lemma 3.38 in \cite{HKM} with $s=\infty$, $p>q(p_+^4-1)$ fixed (here $q$ is the one in our paper, $s$ and $p$ the ones in \cite{HKM}) and $q=t$ (here $q$ is the one in \cite{HKM} and not the one in our paper) that we state below.
\end{proof}

\begin{lemm}[Lemma 3.38 in \cite{HKM}]\label{HKM} Suppose that $0<q<p<s\le\infty$, $\xi\in\R$, and that $B=B_r(x_0)$ is a ball. If a nonnegative function $v\in L^p(B)$ satisfies
\[
\Big(\fint_{\lambda B'}v^s\,dx\Big)^{1/s}\le c_1(1-\lambda)^\xi\Big(\fint_{B'}v^p\,dx\Big)^{1/p}
\]
for each ball $B'=B(x_0,r')$ with $r'\le r$ and for all $0\le\lambda<1$, then
\[
\Big(\fint_{\lambda B}v^s\,dx\Big)^{1/s}\le c(1-\lambda)^{\xi/\theta}\Big(\fint_{B}v^q\,dx\Big)^{1/q}
\]
for all $0\le \lambda<1$. Here $c=c(p,q,s,\xi,c_1)$  and $\theta\in(0,1)$ such that
\[
\frac1p=\frac\theta q+\frac{1-\theta}s.
\]
\end{lemm}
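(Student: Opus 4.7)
The plan is to combine three classical ingredients: H\"older interpolation between $L^q$ and $L^s$ through the intermediate exponent $p$, a Young absorption to move an $L^s$-term from the right to the left, and a Giaquinta--Giusti style iteration on a geometric sequence of concentric balls shrinking toward $\lambda B$. The interesting range is $\xi<0$, where the hypothesis is singular as its parameter tends to $1$; the case $\xi\ge 0$ is strictly easier and follows the same scheme.

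Since $\frac{1}{p} = \frac{\theta}{q} + \frac{1-\theta}{s}$, H\"older's inequality on normalized measures yields, on every concentric ball $B' = B_{\rho'}(x_0) \subset B$,
\[
\Big(\fint_{B'} v^p\Big)^{1/p} \le \Big(\fint_{B'} v^q\Big)^{\theta/q}\Big(\fint_{B'} v^s\Big)^{(1-\theta)/s}.
\]
Plugging this into the hypothesis applied to $B_\sigma \subset B_{\rho'}$ (so that its parameter is $\sigma/\rho'$) and then using Young's inequality with conjugate exponents $\frac{1}{1-\theta}$ and $\frac{1}{\theta}$ to absorb the resulting $L^s$-factor, one obtains, for every $\delta \in (0,1)$ and every $0 < \sigma < \rho' \le r$,
\[
\psi(\sigma) \le \delta\, \psi(\rho') + C(\delta)\, c_1^{1/\theta}\, (1-\sigma/\rho')^{\xi/\theta}\, \Big(\fint_{B_{\rho'}} v^q\Big)^{1/q},
\qquad \psi(\rho) := \Big(\fint_{B_\rho(x_0)} v^s\Big)^{1/s}.
\]

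I would then fix $\lambda \in [0,1)$ and iterate along the geometric sequence $\rho_j := \lambda r + (1-\lambda) r (1 - 2^{-j})$, so that $\rho_0 = \lambda r$, $\rho_j \nearrow r$, $1 - \rho_j/\rho_{j+1} \ge (1-\lambda) 2^{-(j+1)}$, and $\rho_{j+1} \ge r/2$ throughout. Because $\xi/\theta < 0$, the first bound gives $(1-\rho_j/\rho_{j+1})^{\xi/\theta} \le (1-\lambda)^{\xi/\theta}\, 2^{(j+1)|\xi|/\theta}$, while the last bound controls the $L^q$ average on $B_{\rho_{j+1}}$ by a universal constant times $\bigl(\fint_B v^q\bigr)^{1/q}$. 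Telescoping yields
\[
\psi(\rho_0) \le \delta^n \psi(\rho_n) + C(\delta)\, c_1^{1/\theta}\, (1-\lambda)^{\xi/\theta}\, \Big(\fint_B v^q\Big)^{1/q} \sum_{j=0}^{n-1}\bigl(\delta \cdot 2^{|\xi|/\theta}\bigr)^j,
\]
and choosing $\delta$ so small that $\delta \cdot 2^{|\xi|/\theta} < 1$ makes the geometric series converge as $n\to\infty$.

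The main obstacle is controlling the tail $\delta^n \psi(\rho_n)$, since $v$ is only assumed to lie in $L^p(B)$ and $\psi(r)$ may well be infinite. Applying the hypothesis once more with $B'=B$ at parameter $\rho_n/r$ gives the a priori bound $\psi(\rho_n) \le c_1 (1-\lambda)^\xi\, 2^{n|\xi|}\, \bigl(\fint_B v^p\bigr)^{1/p}$. Since $\theta \in (0,1)$ forces $|\xi| < |\xi|/\theta$, one has $\delta \cdot 2^{|\xi|} < \delta \cdot 2^{|\xi|/\theta} < 1$, so $\delta^n \psi(\rho_n) \to 0$. Letting $n\to\infty$ then delivers the claimed estimate with constant $c$ depending only on $p, q, s, \xi, c_1$.
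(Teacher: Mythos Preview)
The paper does not supply its own proof of this lemma; it is quoted verbatim as Lemma~3.38 from \cite{HKM} and used as a black box in the Moser iteration. So there is no ``paper's proof'' to compare against, only the original reference.

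Your argument is correct and is in fact the standard one. The three ingredients you identify---H\"older interpolation with exponent $\theta$, Young absorption to peel off the $L^s$-factor, and a geometric iteration over the radii $\rho_j=\lambda r+(1-\lambda)r(1-2^{-j})$---are exactly the mechanism behind this extrapolation lemma. Your handling of the tail term $\delta^n\psi(\rho_n)$ is the key point: you correctly invoke the hypothesis once more at parameter $\rho_n/r$ to get the bound $\psi(\rho_n)\le c_1(1-\lambda)^\xi 2^{n|\xi|}\bigl(\fint_B v^p\bigr)^{1/p}$, and then observe that $\delta\cdot 2^{|\xi|}<\delta\cdot 2^{|\xi|/\theta}<1$ kills this term. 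This is precisely where the assumption $v\in L^p(B)$ enters, and you use it properly. The remark that $\xi\ge 0$ is the easy case is also correct (for $\xi>0$ the hypothesis already forces $v\equiv 0$, and for $\xi=0$ the geometric factors disappear). Two cosmetic points: your sum should carry an extra factor $2^{|\xi|/\theta}$ from the $(j+1)$ in the exponent, and the bound $\rho_{j+1}\ge r/2$ (needed to compare $\fint_{B_{\rho_{j+1}}}v^q$ with $\fint_B v^q$) holds only for $j\ge 0$, which is all you need; neither affects the argument.
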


\begin{rema}\label{rem-t0} Observe that it is enough to prove Lemma \ref{subhar} for $t\ge t_0>0$ with $t_0$ arbitrary depending only on $p_+^4,p_-^4,q$ and then, use Lemma \ref{HKM} in order to get the result for $0<t<t_0$. This means that, in order to prove Lemma \ref{subhar}, it is enough to get \eqref{ineq-iterative} for $\bar\beta\ge q\beta_0$ with, for instance, $\beta_0\ge 2(p_+^4-1)$ (this means to have $\gamma_0\ge p_+^4-1$).
\end{rema}

%

Now, we prove a weak Harnack inequality for supersolutions. There holds,

\begin{lemm}[Weak Harnack inequality]\label{superhar}  Let $p$ log-H\"older continuous in $B_4$.  Let $0\le H\in L^{q_0}(B_4)$ with  $\max\{1,\frac N{p_-^{4}}\}<q_0\le\infty$ and let $s\ge p_+^4-p_-^4$. There exists $t_0>0$  depending only on $s,p_-^4,p_+^4$,  $ \| H(x)\|_{L^{q_0}(B_4)}$, $\omega_4$ and  $M^{p_+^4-p_-^4}$ with $M=\big(\fint_{B_4} u^{sq'}\big)^{sq'}+
 \big(\fint_{B_4} u^{sr_0}\big)^{sr_0}$ for some choice of $1<q'=\frac q{q-1}<\infty$ depending on $N, p_-^4,q_0$, $1<r_0<\infty$ with $\frac1{q_0}+\frac1q+\frac1{r_0}=1$;  $C>0$  depending on the same constants and also on $t_0,q_0,q$  such that, for $u\ge1$ and bounded with $\Delta_{p(x)}u\le H(x)u^{p(x)-1}$ in $B_4$  there holds that,
\[\inf_{B_{{1}}}u\ge C\Big(\fint_{B_{2}}u^{t_0}\Big)^{1/t_0}.
\]

\end{lemm}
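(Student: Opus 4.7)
The plan is to run Moser's iteration a second time, now with negative exponents, and then bridge the resulting negative-exponent estimate to a positive-exponent one via a logarithmic (BMO/John-Nirenberg) crossover argument.

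\textbf{Stage 1: iteration with negative exponents.} Since $u\ge 1$ and $\Delta_{p(x)}u\le H u^{p(x)-1}$, the second half of Lemma \ref{cacciopoli-lemma} applies for every $\gamma<0$. I would reproduce the chain of inequalities in the proof of Lemma \ref{subhar} line by line, using the same Sobolev step with exponent $\kappa=\hat N/(\hat N-p_-^4)$, the same splitting of $u^{\gamma-1+p(x)}$ into $u^{\gamma-1+p_-}\cdot u^{p(x)-p_-}$, and the same H\"older decomposition $\tfrac1{q_0}+\tfrac1q+\tfrac1{r_0}=1$ to absorb $H(x)$. Writing $\beta=\gamma-1+p_-$ so that now $\beta<p_--1$ and keeping $|\beta|$ bounded away from $0$, I obtain the reverse-direction iterative inequality
\[
\Big(\fint_{B_\sigma}u^{-\kappa|\beta|}\Big)^{-1/\kappa|\beta|}\ge C^{-1}\Big(\frac{\rho-\sigma}{\rho}\Big)^{p_+/|\beta|}\Big(\frac{\sigma}{\rho}\Big)^{N/(\kappa|\beta|)} M^{-(p_+-p_-)/|\beta|}(1+|\beta|)^{-p_-/|\beta|}\Big(\fint_{B_\rho}u^{-q|\beta|}\Big)^{-1/q|\beta|},
\]
with the same dependence of $C$ on $s,p_\pm^4,\omega_4,\|H\|_{L^{q_0}(B_4)},M^{p_+^4-p_-^4}$ as in Lemma \ref{subhar}. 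Iterating on the geometric sequence $r_j=\rho_1+2^{-j}(\rho_2-\rho_1)$ exactly as in the previous lemma (this time with exponents $-\bar\kappa^j t$) produces
\[
\inf_{B_{\rho_1}}u\ge C\Big(\frac{\rho_2-\rho_1}{\rho_2}\Big)^{C'}\Big(\fint_{B_{\rho_2}}u^{-t}\Big)^{-1/t}
\]
for every $t>0$ (small $t$ being recoverable via the extrapolation Lemma \ref{HKM} exactly as in Remark \ref{rem-t0}).

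\textbf{Stage 2: logarithmic crossover.} It remains to prove that for some $t_0>0$ (depending on the allowed quantities),
\[
\Big(\fint_{B_2}u^{-t_0}\Big)^{-1/t_0}\ge C\Big(\fint_{B_2}u^{t_0}\Big)^{1/t_0}.
\]
The standard tool is John-Nirenberg: it suffices to show that $v:=\log u$ has bounded mean oscillation on balls $B_r\subset B_3$, with BMO seminorm controlled by the allowed constants. To obtain this, test the equation with $\phi=\eta^{p_+}u^{1-p(x)}$. Because $u\ge 1$ and $p$ is log-H\"older, the term $\nabla(u^{1-p(x)})$ produces a main contribution $(p(x)-1)u^{-p(x)}|\nabla u|^{p(x)}\eta^{p_+}$ and an error of order $|\log u|\,u^{-p(x)}|\nabla u|^{p(x)-1}|\nabla p|\,\eta^{p_+}$; the latter is controlled using $r^{-(p_+^r-p_-^r)}\le K$ and Young's inequality. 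The right-hand side $\int H u^{p(x)-1}\phi=\int H\eta^{p_+}$ is estimated by $\|H\|_{L^{q_0}}$. After absorption this yields
\[
\int_{B_r}\eta^{p_+}|\nabla v|^{p_-}\le C\,r^{N-p_-}\big(1+\|H\|_{L^{q_0}(B_4)}\big),
\]
and therefore $\fint_{B_r}|v-v_{B_r}|\le C$ by Poincar\'e, i.e., $v\in\mathrm{BMO}(B_3)$. John-Nirenberg then gives $t_0>0$ and $C$ with $\fint_{B_2}e^{t_0|v-v_{B_2}|}\le C$, from which the desired crossover follows. Combining with Stage 1 applied with $\rho_1=1$, $\rho_2=2$, $t=t_0$, completes the proof.

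\textbf{Main obstacle.} The delicate step is the logarithmic Caccioppoli estimate under the non-standard growth: the test function $u^{1-p(x)}$ generates a genuinely new term proportional to $\nabla p$ that is not present in the constant-$p$ theory, and controlling it requires simultaneously the log-H\"older continuity of $p$, the $a$ priori bound $u\le M_0$ (to make $\log u$ bounded above, hence $|\log u|^{p_+-p_-}$ controllable by $M^{p_+-p_-}$-type factors), and the careful bookkeeping of the constants already organized in Lemma \ref{subhar}. Once that BMO bound is in hand, the John-Nirenberg step is standard and the remainder is routine.
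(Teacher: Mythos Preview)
Your overall architecture---negative-exponent Moser iteration to reach $\phi(u,-t_0,B_2)\le C\inf_{B_1}u$, followed by a BMO/John--Nirenberg crossover $\phi(u,t_0,B_2)\le \bar C\,\phi(u,-t_0,B_2)$---is exactly the paper's strategy, and Stage~1 matches the paper's argument closely.

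The gap is in Stage~2. Your proposed test function $\phi=\eta^{p_+}u^{1-p(x)}$ has a \emph{variable} exponent, and computing $\nabla(u^{1-p(x)})$ produces a term $-(\log u)\,u^{1-p(x)}\nabla p$. But the standing hypothesis is only that $p$ is log-H\"older continuous; under this assumption $\nabla p$ need not exist (log-H\"older is strictly weaker than any H\"older modulus, let alone Lipschitz), so $\phi$ is not an admissible $W^{1,p(\cdot)}_0$ test function and the error term you write down is not defined. The bound $r^{-(p_+^r-p_-^r)}\le K$ controls oscillation of $p$, not a pointwise gradient, and cannot salvage this. Even if $p$ were smooth, the resulting factor $|\log u|$ would bring in dependence on $\|u\|_{L^\infty}$, which is not among the constants allowed in the statement.

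The paper avoids this entirely by taking a \emph{constant} exponent in the logarithmic step: on each ball $B_{2r}$ it applies the already-proved Caccioppoli inequality \eqref{cacciopoli} with $\gamma=1-p_-^{2r}$ (so the test function is $u^{1-p_-^{2r}}\eta^{p_+}$, no $\nabla p$ appears). This yields directly
\[
\fint_{B_r}|\nabla\log u|^{p_-^{2r}}\le C\Big[1+r^{-p_+^{2r}}M_1^{p_+^4-p_-^4}+\fint_{B_{2r}}H\,u^{p(x)-p_-^{2r}}\Big],
\]
and the residual factor $u^{p(x)-p_-^{2r}}$ (not $\log u$) is what gets absorbed into $M^{p_+^4-p_-^4}$ via H\"older. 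From there Poincar\'e and John--Nirenberg give the crossover exactly as you outlined. So the fix is simply to replace $1-p(x)$ by the frozen value $1-p_-^{2r}$ and reuse Lemma~\ref{cacciopoli-lemma} rather than testing the equation afresh.
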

\begin{proof}The proof follows the lines of the one of Lemma \ref{subhar}. This time we use Cacciopoli inequality \eqref{cacciopoli} with $\gamma< -\gamma_0=-(p_-^4-1)<0$.
We call again  $\kappa=\frac {\hat N}{\hat N-p_-^4}$ with $\hat N$ as in the proof of Lemma \ref{subhar} and choose $q$ and $r_0$ as in that Lemma. Then, we take $0< \sigma<\rho\le 4$. For $\beta=\gamma+(p_--1)<0$ we  prove that,
\begin{equation}\label{beta2}
\phi(u,q\beta,B_\rho)\le C^{1/|\beta|}(1+|\beta|)^{p_+/|\beta|}
\Big(\frac\rho{\rho-\sigma}\Big)^{p_+/|\beta|}\phi(u,\kappa\beta,B_\sigma).
\end{equation}
Here $C$ is a constant depending on $s,q_0,q, p_+^4,p_-^4,\gamma_0=p_-^4-1$, $ \| H(x)\|_{L^{q_0}(B_4)}$ and $M^{p_+^4-p_-^4}$.

In fact, we proceed as in the proof of Lemma \ref{subhar} until we get \eqref{beta}. Then, since $\beta<0$ we get \eqref{beta2}.

Observe that \eqref{beta2} holds for any $\beta<0$ since this is equivalent to $\gamma<-(p_--1)\le -(p_-^4-1)$.

In order to finish the proof it is necessary to prove that there exists $t_0>0$ and $\bar C>0$ depending only on $p_+^4,p_-^4$,  $ \| H(x)\|_{L^{q_0}(B_4)}$, $M^{p_+^4-p_-^4}$ and the log-H\"older modulus of continuity of $p$ in $B_4$ such that,
\begin{equation}\label{reverse}
\phi(u,t_0,B_{2})\le \bar C\phi(u,-t_0,B_{2}).
\end{equation}
Then,  we choose $\beta=-\frac{t_0}{q}$ in \eqref{beta2} in order to start the iterative process.

In order to prove \eqref{reverse}, we let $0<r\le2$  and we bound by using Cacciopoli inequality \eqref{cacciopoli} with $\gamma=1-p_-^{2r}$, $\eta\in C_0^\infty(B_{2r})$ with $\eta\equiv1$ in $B_r$, $|\nabla \eta|\le \frac Cr$,
\[\begin{aligned}
\fint_{B_r} |\nabla \log u|^{p_-^{2r}}&=\fint_{B_r}u^{-p_-^{2r}}|\nabla u|^{p_-^{2r}}\le
C\fint_{B_{2r}}u^{-p_-^{2r}}\eta^{p_+^{2r}}|\nabla u|^{p_-^{2r}}\\
&\le C\fint_{B_{2r}}u^{-p_-^{2r}}\eta^{p_+^{2r}}+\frac C{(p_-^{2r}-1)^{p_+^{2r}}}\fint_{B_{2r}}u^{p(x)-p_-^{2r}}\eta^{p_+^{2r}-p(x)}|\nabla\eta|^{p(x)}\\
&+\frac C{p_-^{2r}-1}\fint_{B_{2r}}H(x)u^{p(x)-p_-^{2r}}\eta^{p_+^{2r}-p(x)}\\
&\le C(p_+^4,p_-^{4})\big[1+r^{-p_+^{2r}}M_1^{p_+^4-p_-^4} \big]+\frac C{p_-^{2r}-1}\fint_{B_{2r}}H(x)u^{p(x)-p_-^{2r}}.
\end{aligned}
\]

The last term can be bound in the following way
\[\begin{aligned}
\fint_{B_{2r}}H(x)u^{p(x)-p_-^{2r}}&\le \Big(\fint_{B_{2r}}H^{q_0}\Big)^{1/q_0}
\Big(\fint_{B_{2r}}u^{(p_+^{2r}-p_-^{2r}){q_0'}}\Big)^{1/{q_0'}}\\
&\le Cr^{-N/q_0}\|H\|_{L^{q_0}(B_4)}\Big(\fint_{B_{2r}}u^{(p_+^{2r}-p_-^{2r}){r_0}}\Big)^{1/r_0}\\
&\le Cr^{-p_+^{2r}}\|H\|_{L^{q_0}(B_4)}M_2^{p_+^{2r}-p_-^{2r}}
\end{aligned}
\]
since $q_0'\le r_0$, $\frac N{q_0}< p_-^4\le p_-^{2r}\le p_+^{2r}$, $0<r\le 2$,.

%
%


 Gathering all these estimates we get,
\[
\fint_{B_r} |\nabla \log u|^{p_-^{2r}}\le C(p_+^4,p_-^{4},\|H\|_{L^{q_0}(B_4)},\omega_{4})\ r^{-p_+^{2r}}M^{p_+^4-p_-^4}.
\]

Now the proof follows in a standard way. By the Poincar\'e inequality applied to $f=\log u$,  using that $r^{p_-^{2r}}\le C r^{p_+^{2r}}$,
\[\begin{aligned}
\fint_{B_r}|f-f_{B_r}|^{p_-^{2r}}&\le C r^{p_-^{2r}}\fint_{B_r}|\nabla f|^{p_-^{2r}}
\le C(p_+^4,p_-^{4},\|H\|_{L^{q_0}(B_4)},\omega_4)M^{p_+^4-p_-^4}
\end{aligned}\]

 Since this holds for every ball $B_r$ with $r\le 2$, by the John-Nirenberg Lemma there exist constants $C_1$ and $C_2$ depending only on $p_-^4,p_+^4,\|H\|_{L^{q_0}(B_4)},\omega_4$ and $M^{p_+^4-p_-^4}$ such that
\[
\fint_{B_{2}}e^{C_1|f-f_{B_2}|}\le C_2
\]
where $f_{B_2}=\fint_{B_2}f$.

We conclude that,
\[\begin{aligned}
\Big(\fint_{B_ {2}}e^{C_1f}\Big)\Big(\fint_{B_ {2}}e^{-C_1f}\Big)&=\Big(\fint_{B_ {2}}e^{C_1(f-f_{B_ {2}})}\Big)
\Big(\fint_{B_ {2}}e^{-C_1(f-f_{B_ {2}})}\Big)\\
&\le \Big(\fint_{B_ {2}}e^{C_1|f-f_{B_ {2}}|}\Big)^2\le C_2^2
\end{aligned}
\]
and we have \eqref{reverse} with $t_0=C_1$.

Now the proof of the lemma ends by an iterative process similar to the one in Lemma \ref{subhar}. In fact,  we call $\bar\kappa=\frac\kappa {q}$, $\bar \beta= q\beta$ and, for the iteration we let $\bar\beta_j=-\bar\kappa^j t_0$, $r_j=1+2^{-j}$. Hence, $\gamma_j=\beta_j-(p_-^{r_j}-1)=-\bar\kappa^j\frac{t_0}{q}-(p_-^{r_j}-1)\le -\gamma_0:=-(p_-^4-1)$. Then, with $\bar C$ the constant in \eqref{reverse}, using that $p_-^{r_j}, p_+^{r_j}\le p_+^4$,
\[
\begin{aligned}
\bar C^{-1}&\phi(u,t_0,B_ {2})\le \phi(u,-t_0,B_ {2})\\
&\le
C_0^{\sum_{i=0}^j\bar\kappa^{-i}}\Big(\prod_{i=0}^j(1+t_0\bar\kappa^i)^{t^{-1}
\kappa^{-i}}\Big)^{q_0p_+^4}
\big(2^{qp_+^4t_0^{-1}}\big)^{\sum_{i=0}^j(i+2)\bar\kappa^{-i}}
\phi(u,-\bar\kappa^{j+1}t_0,B_{r_{j+1}}).
\end{aligned}
\]
Thus,
\[
\Big(\fint_{B_{2}}u^{t_0}\Big)^{1/t_0}\le C\lim_{j\to\infty}\phi(u,-\kappa^{j}t_0,B_{r_j}).
=C\inf_{B_{1}}u.
\]
And the lemma is proved.
\end{proof}

\medskip

We can improve on Lemma \ref{superhar} in the following way (see \cite{MZ} where this improvement was done in the case of $p$ constant)
\begin{lemm}[Improved weak Harnack inequality]\label{superhar-improved}
Under the assumptions of Lemma \ref{superhar}, let $0<t< \frac{N}{N-p_-^4}(p_-^4-1)$ if $p_-^4<N$, $t>0$ arbitrary if $p_-^4\ge N$. Then, there exists a constant $C$ with the same dependence as the constant in Lemma \ref{superhar} and also depending on $t$ such that
\[
\Big(\fint_{B_2} u^t\Big)^{1/t}\le C\inf_{B_1} u
\]
\end{lemm}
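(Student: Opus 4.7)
The plan is to boost the exponent in Lemma \ref{superhar} from the specific small value $t_0$ up to any $t$ below the critical value $\kappa(p_-^4-1)=\frac{N(p_-^4-1)}{N-p_-^4}$, by inserting a Moser iteration that covers the intermediate range of exponents $\beta\in(0,p_--1)$, which is precisely the range not used in either Lemma \ref{subhar} or Lemma \ref{superhar}. When $p_-^4\ge N$ the Sobolev exponent may be chosen arbitrarily large and the upper bound disappears.

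First I would repeat the Caccioppoli--Sobolev computation of Lemma \ref{subhar}, but now for $\gamma\in(-(p_--1)+\delta_1,-\delta_2)$, so that $\beta:=\gamma+p_--1$ lies in $[\delta_1,p_--1-\delta_2]$, bounded away from both endpoints. Since $\gamma<0$ and $u$ is a supersolution, the second case of Lemma \ref{cacciopoli-lemma} applies, and the extra right-hand side term $|\gamma|^{-1}\int Hu^{\gamma+p(x)-1}\eta^{p_+}$ is controlled exactly as in Lemma \ref{subhar} via a factor $M_2^{p_+-p_-}$. One obtains the one-step estimate
\[
\phi(u,\kappa\beta,B_\sigma)\le C(\delta_2)^{1/\beta}M^{(p_+-p_-)/\beta}(1+\beta)^{p_-/\beta}\Big(\frac{\rho}{\sigma}\Big)^{N/(\kappa\beta)}\Big(\frac{\rho}{\rho-\sigma}\Big)^{p_+/\beta}\phi(u,q\beta,B_\rho)
\]
for $0<\sigma<\rho\le 4$ and $\beta\in[\delta_1,p_--1-\delta_2]$, formally the same as \eqref{beta} but valid in the new range.

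Because $\bar\kappa:=\kappa/q>1$, finitely many iterations of this estimate on a chain of nested balls shrinking toward $B_2$ raise the exponent by any prescribed factor. Concretely, starting from $\phi(u,t_0,B_{\rho_0})$ with $\rho_0$ slightly larger than $2$ and $t_0$ the small exponent of Lemma \ref{superhar}, one takes $\beta_j:=\bar\kappa^{j-1}t_0/q$ at step $j$, so that $q\beta_j$ matches the exponent reached at step $j-1$. The iteration remains admissible so long as $q\beta_j=\bar\kappa^{j-1}t_0<q(p_--1)$, equivalently $\kappa\beta_j=\bar\kappa^jt_0<\kappa(p_-^4-1)$, which after finitely many steps allows one to reach any prescribed $t<\kappa(p_-^4-1)$. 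The initial bound $\phi(u,t_0,B_{\rho_0})\le C\inf_{B_1}u$ is produced by applying Lemma \ref{superhar} to a suitably rescaled/covered version of the configuration $B_1\subset B_2\subset B_4$, and combining yields $\phi(u,t,B_2)\le C\inf_{B_1}u$ with the stated dependencies.

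The main technical obstacle is the blowup of the factor $|\gamma|^{-p_+}$ in \eqref{cacciopoli} as $\gamma\to-(p_--1)^+$ (equivalently $\beta\to(p_--1)^-$): this is what forces the constant $C$ in Lemma \ref{superhar-improved} to depend on $t$ and, more importantly, what produces the explicit critical threshold $\frac{N(p_-^4-1)}{N-p_-^4}$ --- one factor of $(p_--1)/p_-$ short of the Sobolev conjugate $p_-^{*}=\frac{Np_-}{N-p_-}$.
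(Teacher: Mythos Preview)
Your approach is essentially the paper's: run a finite Moser iteration in the intermediate range $0<\beta<p_--1$ (i.e.\ $-(p_--1)<\gamma<0$, the supersolution case of Lemma~\ref{cacciopoli-lemma}) to bridge from the exponent $t_0$ of Lemma~\ref{superhar} up to any prescribed $t<\kappa(p_-^4-1)$, and then invoke Lemma~\ref{superhar} on a slightly larger intermediate ball. The only cosmetic difference is that the paper iterates \emph{backward}, setting $\bar\beta_{ij}=\bar\kappa^{\,i-(j+1)}t$ for $i=0,\dots,j$, so the final output is exactly $t$ and every $\gamma_{ij}\le -(p_-^4-1-\kappa^{-1}t)<0$ uniformly; your forward iteration from $t_0$ works as well, but the geometric steps $\bar\kappa^{j}t_0$ may overshoot $t$, so one should adjust the last $\beta$ to equal $t/\kappa$ (or simply interpolate) to keep $\gamma$ bounded away from $0$.

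One slip in your final paragraph: the blowup of $|\gamma|^{-p_+}$ occurs as $\gamma\to 0^-$, not as $\gamma\to-(p_--1)^+$; your parenthetical ``$\beta\to(p_--1)^-$'' is the correct description of that limit and is indeed what produces the critical threshold $\kappa(p_-^4-1)$.
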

\begin{proof} We prove that, for every $t$ in this range, $t_0$ the one in Lemma \ref{superhar}, $0<\rho_1<\rho_2\le 4$ there holds that
\begin{equation}\label{t-bound}
\Big(\fint_{B_{\rho_1}} u^t\Big)^{1/t}\le \bar C \Big(\fint_{B_{\rho_2}} u^{ t_0}\Big)^{1/ t_0}
\end{equation}
for a constant $\bar C$ depending on $t,t_0,\rho_1,\rho_2, M^{p_+^4-p_-^4},p_+^4,p_-^4, q_0$.

 This will prove the lemma if we replace in the proof of Lemma \ref{superhar} the ball $B_2$ by $B_{\rho_2}$ with $2<\rho_2<4$ and we take $\rho_1=2$ in \eqref{t-bound}.

 In order to prove \eqref{t-bound}, we proceed as in Lemma \ref{superhar} but we are more careful with the choice of $\kappa$. In fact, as in Lemma~\ref{superhar} we choose $\kappa=\frac{\hat N}{\hat N-p_-^4}$ with $\hat N=N$ if $p_-^4<N$ and $p_-^4<\hat N<q_0 p_-^4$ if $p_-^4\ge N$. In this latter case, we choose $\hat N$ close enough to $p_-^4$ so that $\kappa^{-1} t=  t\big(1-\frac{p_-^4}{\hat N}\big)<p_-^4-1 $.

 Observe that  $\kappa^{-1}  t<p_-^4-1$ also  if $p_-^4<N$. In fact, in this case we have $\kappa=\frac N{N-p_-^4}$ and  the inequality holds  by our hypothesis on $t$.

 Then, we choose $q$ as in Lemma \ref{superhar}. This is, $1\le q_0'<q<\kappa$.

  In order to prove \eqref{t-bound} we go back to \eqref{beta}. Recall that we get this inequality if $\gamma\le-\gamma_0<0$ and $\beta=\gamma+p_--1$.

 Then,  as in Lemma \ref{superhar}, we take $\bar \beta=q\beta$, $\bar\kappa=\frac\kappa q>1$.

Now, for $j\in\N$ and $i=0,1,\cdots,j$ we let $\bar \beta_{ij}=\bar\kappa^{i-(j+1)}t$. Then, $\beta_{ij}=\bar\kappa^{i-(j+1)}\frac tq$ and $\gamma_{ij}=\beta_{ij}-(p_--1)\le\bar\kappa^{-1}\frac tq-(p_--1)\le \kappa^{-1} t-(p_-^4-1)=-\gamma_0<0$.

Now, we iterate inequality \eqref{ineq-iterative} for $i=0,\cdots,j$ with $\rho=r_i$, $\sigma=r_{i+1}$, $r_i=\rho_1+2^{-i}(\rho_2-\rho_1)$. We get,
\[
\|u\|_{L^{\bar\kappa\bar\beta_{jj}}(B_{r_{j+1}})}\le \bar C \|u\|_{L^{\bar\beta_{0j}}(B_{r_0})}
\]
for a constant $\bar C$ depending on $j,q,\rho_1,\rho_2,M^{p_+^4-p_-^4},p_+^4,p_-^4$.
Thus, we get \eqref{t-bound} once we observe that $\rho_1\le r_{j+1}$, $r_0=\rho_2$, $\bar\kappa\bar\beta_{jj}=t$, $\bar\beta_{0j}=\bar\kappa^{-(j+1)}t$ and we choose $j$ large so that $\bar\kappa^{-(j+1)}t\le  t_0$.
\end{proof}

\bigskip

Now, by modifying the proof of Lemmas \ref{cacciopoli-lemma} and \ref{subhar} we will prove that  weak subsolutions  are locally bounded from above and weak supersolutions are locally bounded from bellow. This is already known when $p_1>N$ since weak super- and sub-solutions belong to $W^{1,p_1}(\Omega)\subset L^\infty(\Omega)$ if $p_1> N$.

We start with a variation of Cacciopoli inequality,

\begin{lemm}\label{cacciopoli-sin-acotacion} Let $u\in W^{1,p(x)}(B)$  such that $\Delta_{p(x)}
u\ge -H(x)(1+|u|)^{p(x)-1}$ in a ball $B$ and $\gamma\ge1$. Here $ H\ge0$ is a measurable function. Then, for $\eta
\in C_0^\infty(B)$ there holds that,
\begin{equation}\label{cacciopoli-F}\begin{aligned}
\int_{B}F_n(u_++1)|\nabla u_+|^{p_-}\eta^{p_+}&\le\int_{B} F_n(u_++1)\eta^{p_+}
+C\int_{B}u_+^{p(x)}F_n(u_++1)\eta^{p_+-p(x)}|\nabla\eta|^{p(x)}
\\&+C\int_{B} H(x)(u_++1)^{p(x)-1}G_n(u_++1)\eta^{p_+}\end{aligned}
\end{equation}
with $u_+=\max\{u,0\}$, $C=C(p_+,p_-)$. Here $p_+=\max_{\overline B}p$, $p_-=\min_{\overline B} p$.

In \eqref{cacciopoli-F}, the functions $F_n$ and $G_n$ are defined, for $s\ge1$, by
\[
G_n(s)=\int_1^s F_n(\tau)\,d\tau,
\]
\[
F_n(s)=\begin{cases}
s^{\gamma-1}\quad&\mbox{if}\quad 1\le s\le n,\\
n^{\gamma-1}\quad&\mbox{if}\quad  s\ge n.
\end{cases}
\]

\end{lemm}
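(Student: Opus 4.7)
The strategy is the same as for Lemma \ref{cacciopoli-lemma}, but replacing the now unavailable test function $u^\gamma\eta^{p_+}$ by a truncated version built from $F_n$ and $G_n$. Concretely, I take
\[
\phi = G_n(u_+ + 1)\,\eta^{p_+}
\]
as test function in the weak form of $\Delta_{p(x)}u\ge -H(x)(1+|u|)^{p(x)-1}$. Since $F_n\le n^{\gamma-1}$, $G_n$ is globally Lipschitz and grows at most linearly, so $\phi\in W^{1,p(x)}_0(B)$; admissibility as an $L^\infty$ test function is obtained by first replacing $u_+$ by $\min(u_+,k)$, deriving the inequality, and passing $k\to\infty$ by monotone convergence (note that the integrands are nonnegative except for the convection term, which is controlled via Young as below). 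On $\{u\le0\}$ one has $u_++1=1$, so $G_n(u_++1)=0$ and $\nabla u_+=0$, and the test function together with its gradient vanishes there. Thus the weak inequality reduces to
\[
\int F_n(u_++1)\,|\nabla u_+|^{p(x)}\eta^{p_+} \;\le\; -p_+\!\int G_n(u_++1)\,\eta^{p_+-1}|\nabla u|^{p(x)-2}\nabla u\cdot\nabla\eta \;+\; \int H(x)(1+u_+)^{p(x)-1}G_n(u_++1)\,\eta^{p_+}.
\]

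The key algebraic fact that makes the right structure appear is the pointwise estimate
\[
G_n(s)\le (s-1)\,F_n(s)\qquad\text{for all } s\ge 1,
\]
valid because $\gamma\ge 1$. For $1\le s\le n$ this is exactly $(s^\gamma-1)/\gamma\le (s-1)s^{\gamma-1}$, which follows from the mean value theorem applied to $\tau\mapsto \tau^\gamma$ on $[1,s]$ together with monotonicity of $\tau\mapsto\tau^{\gamma-1}$; for $s\ge n$ one writes $G_n(s)=G_n(n)+(s-n)n^{\gamma-1}$ and uses $G_n(n)\le (n-1)n^{\gamma-1}$ from the previous case. Raising to the power $p(x)$ gives
\[
\frac{G_n(u_++1)^{p(x)}}{F_n(u_++1)^{p(x)-1}}\;\le\; u_+^{p(x)}\,F_n(u_++1),
\]
which is precisely the factor that has to appear in the gradient-of-$\eta$ term of \eqref{cacciopoli-F}.

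Now I apply Young's inequality to the convection term in the form
\[
p_+\,G_n(u_++1)\,\eta^{p_+-1}|\nabla u|^{p(x)-1}|\nabla\eta|\;\le\;\frac{\varepsilon p_+}{p'(x)}F_n(u_++1)|\nabla u|^{p(x)}\eta^{p_+}+\frac{p_+}{\varepsilon^{p(x)-1}p(x)}\cdot\frac{G_n(u_++1)^{p(x)}}{F_n(u_++1)^{p(x)-1}}\,\eta^{p_+-p(x)}|\nabla\eta|^{p(x)},
\]
with $\varepsilon>0$ chosen so that $\varepsilon p_+/p'(x)\le 1/2$. Using the algebraic bound above on the last factor and absorbing the first term into the left-hand side yields
\[
\int F_n(u_++1)\,|\nabla u_+|^{p(x)}\eta^{p_+}\le C\!\int u_+^{p(x)}F_n(u_++1)\,\eta^{p_+-p(x)}|\nabla\eta|^{p(x)}+C\!\int H(x)(u_++1)^{p(x)-1}G_n(u_++1)\,\eta^{p_+},
\]
with $C=C(p_+,p_-)$.

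Finally, to pass from $|\nabla u_+|^{p(x)}$ to $|\nabla u_+|^{p_-}$ on the left, I use the elementary inequality $a^{p_-}\le 1+a^{p(x)}$ for $a\ge 0$ since $p_-\le p(x)$, which gives
\[
F_n(u_++1)|\nabla u_+|^{p_-}\eta^{p_+}\le F_n(u_++1)\eta^{p_+}+F_n(u_++1)|\nabla u_+|^{p(x)}\eta^{p_+}.
\]
Integrating and combining with the previous display produces \eqref{cacciopoli-F}. The main technical obstacle is verifying that $\phi$ is an admissible test function without a priori $L^\infty$ bounds on $u$ (handled by truncation) and pinning down the precise algebraic inequality $G_n(s)\le (s-1)F_n(s)$ that makes the Young step produce exactly the structure $u_+^{p(x)}F_n(u_++1)$ stated in the lemma; all remaining steps mirror the proof of Lemma \ref{cacciopoli-lemma}.
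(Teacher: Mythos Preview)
Your proof is correct and follows essentially the same route as the paper: same test function $\phi=G_n(u_++1)\eta^{p_+}$, same key algebraic inequality $G_n(s)\le (s-1)F_n(s)$ (the paper justifies it in one line via monotonicity of $F_n$), same Young-and-absorb step, and the same passage from $|\nabla u_+|^{p(x)}$ to $|\nabla u_+|^{p_-}$. The only cosmetic difference is that the paper first replaces $G_n$ by $u_+F_n$ and then applies the standard Young inequality, whereas you apply a weighted Young inequality first and then invoke the algebraic bound in the form $G_n^{p}/F_n^{p-1}\le u_+^{p}F_n$; the two orderings are equivalent. Your extra care about admissibility of $\phi$ (truncating $u_+$ to force $\phi\in L^\infty$) is a point the paper glosses over, so your version is if anything slightly more complete.
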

\begin{proof} We proceed as in the proof of Lemma \ref{cacciopoli-lemma}. This time we take as test function \linebreak
 $\phi=G_n(u_++1)\eta^{p_+}\in W_0^{1,p(x)}(B)$ for every $\gamma\ge1$. We get,
\[
\begin{aligned}
\int F_n(u_++1)|\nabla u_+|^{p(x)}\eta^{p_+}&\le -p_+\int G_n(u_++1)\eta^{p_+-1}|\nabla u_+|^{p(x)-1}|\nabla\eta|\\
&\hskip2.5cm+\int H(x)(u_++1)^{p(x)-1}G_n(u_++1)\eta^{p_+}\\
&\le C\int u_+F_n(u_++1)\eta^{p_+-1}|\nabla u_+|^{p(x)-1}|\nabla\eta|\\
&\hskip2.5cm+\int H(x)(u_++1)^{p(x)-1}G_n(u_++1)\eta^{p_+}
\end{aligned}
\]
since $G_n(u_++1)=0$ if $u_+=0$ and $G_n(s)\le F_n(s)(s-1)$, as $F_n$ is a nondecreasing function in $[1,\infty)$.

Now, by applying Young inequality we get,
\[\begin{aligned}
\int F_n(u_++1)&|\nabla u_+|^{p(x)}\eta^{p_+}\\
&\le C\int u_+^{p(x)}F_n(u_++1)\eta^{p_+-p(x)}|\nabla\eta|^{p(x)}+\int H(x)(u_++1)^{p(x)-1}G_n(u_++1)\eta^{p_+}
\end{aligned}
\]
and the lemma is proved.\end{proof}

We can now prove the weak maximum principle. There holds,
\begin{lemm} \label{weak maximum principle}  Let $p$ log-H\"older continuous in $B_4$. Let $u\in W^{1,p(x)}(B_4)$  such that $\Delta_{p(x)}u\ge -H(x)(|u|+1)^{p(x)-1}$ in $B_4$ where $0\le H\in L^{q_0}(B_4)$ with $\max\{1,\frac N{p_-^{4}}\}<q_0\le \infty$.  Then, there exists $0<\bar \rho\le4$ such that, for every $0<\rho_1<\rho_2< \bar\rho$ and for every $0<t<\infty$ there holds that,
\begin{equation}\label{eq-subhar-elliptic-+}
\sup_{B_{\rho_1}}u_+\le C\Big(\frac{\rho_2}{\rho_2-\rho_1}\Big)^C
\Big(\fint_{B_{\rho_2}}(u_++1)^{t}\Big)^{1/t}.
\end{equation}

 The constant $C$  depends only on ${p_+^4.p_-^4,M^{p_+^4-p_-^4}}$, $ \| H(x)\|_{L^{q_0}(B_4)}$, $t$ and $q_0$. $\bar\rho$ depends on $q_0,p_-^4$ and the log-H\"older modulus of continuity of $p$ in $B_4$. Here $M=\big(\fint_{B_4} |u|^{p_-^4}\big)^{1/p_-^4}$.

\end{lemm}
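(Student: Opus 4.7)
The plan is to adapt the Moser iteration of Lemma \ref{subhar} to the function $v := u_+ + 1 \ge 1$, using the Caccioppoli inequality of Lemma \ref{cacciopoli-sin-acotacion} in place of Lemma \ref{cacciopoli-lemma}. The new difficulty, compared to Lemma \ref{subhar}, is that we only know $u \in L^{p_-^4}(B_4)$; variable-exponent terms must therefore be absorbed using the smallness of $p_+^{2\rho}-p_-^{2\rho}$ for small $\rho$ afforded by log-H\"older continuity. This is precisely what forces the restriction $\rho_2 < \bar\rho$.

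First I would fix $0<\rho_1<\rho_2<\bar\rho\le 4$ (with $\bar\rho$ chosen at the end), set $v=u_++1$, and pick a cutoff $\eta\in C_0^\infty(B_\rho)$ with $\eta\equiv1$ on $B_\sigma$ and $|\nabla\eta|\le C/(\rho-\sigma)$. For $\gamma\ge 1$, Lemma \ref{cacciopoli-sin-acotacion} combined with monotone convergence as $n\to\infty$ (since $F_n(v)\nearrow v^{\gamma-1}$ and $G_n(v)\nearrow(v^\gamma-1)/\gamma\le v^\gamma/\gamma$ because $v\ge 1$) yields
$$
\int v^{\gamma-1}|\nabla v|^{p_-^\rho}\eta^{p_+^\rho}\le \int v^{\gamma-1}\eta^{p_+^\rho}+C\int v^{\gamma+p(x)-1}\eta^{p_+^\rho-p(x)}|\nabla\eta|^{p(x)}+\frac{C}{\gamma}\int H\,v^{\gamma+p(x)-1}\eta^{p_+^\rho},
$$
where we used $u_+^{p(x)}v^{\gamma-1}\le v^{\gamma+p(x)-1}$ and $v^\gamma\le v^{\gamma+p(x)-1}$.

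Next I would copy the Sobolev step of Lemma \ref{subhar}: apply Sobolev to the function $v^{(\gamma-1+p_-^\rho)/p_-^\rho}\eta^{p_+^\rho/p_-^\rho}$, with $\kappa=\hat N/(\hat N-p_-^4)$ and $q\in(1,\kappa)$ chosen so that $1/q+1/q_0<1$ (and $r_0$ determined by $1/q_0+1/q+1/r_0=1$), to obtain a reverse-H\"older inequality
$$
\phi(v,\kappa\bar\beta,B_\sigma)\le C\,A_\rho\,\Big(\tfrac{\rho}{\rho-\sigma}\Big)^{p_+^4/\bar\beta}\phi(v,\bar\beta,B_\rho),\qquad \bar\beta=q\beta,\ \beta=\gamma+p_-^\rho-1.
$$
The factor $A_\rho$ absorbs the variable-exponent estimates. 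Here is the crucial step: the Cacciopoli right-hand side produces, after H\"older, integrals of $v^{(p(x)-p_-^\rho)r_0}$; since $0\le p(x)-p_-^\rho\le p_+^{2\rho}-p_-^{2\rho}$ and log-H\"older continuity gives $p_+^{2\rho}-p_-^{2\rho}\le C/|\log \rho|$, we may choose $\bar\rho$ so small that $(p_+^{2\bar\rho}-p_-^{2\bar\rho})r_0\le p_-^4$, and then by H\"older and $v\ge 1$,
$$
\fint_{B_\rho}v^{(p(x)-p_-^\rho)r_0}\le \Big(\fint_{B_\rho}v^{p_-^4}\Big)^{(p_+^{2\rho}-p_-^{2\rho})r_0/p_-^4}\le C\rho^{-N}\,(1+M^{p_-^4})^{(p_+^{2\rho}-p_-^{2\rho})r_0/p_-^4}.
$$
Log-H\"older yields $\rho^{-(p_+^{2\rho}-p_-^{2\rho})}\le C$, so these factors collapse to a constant times $M^{p_+^4-p_-^4}$. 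The term with $H$ is handled identically, using $H\in L^{q_0}$ and the same choice of $r_0$.

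Finally, the reverse-H\"older inequality with constants controlled by the quantities allowed in the statement is iterated exactly as in the last part of Lemma \ref{subhar}: for some fixed $t_*>q(p_+^4-1)$, set $\bar\beta_j=\bar\kappa^j t_*$ and $r_j=\rho_1+2^{-j}(\rho_2-\rho_1)$, and iterate to get
$$
\sup_{B_{\rho_1}}v\le C\Big(\tfrac{\rho_2}{\rho_2-\rho_1}\Big)^C\Big(\fint_{B_{\rho_2}}v^{t_*}\Big)^{1/t_*}.
$$
To reach arbitrary $t>0$, invoke the extrapolation Lemma \ref{HKM} exactly as in Remark \ref{rem-t0}. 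The main obstacle is indeed Step~3 above: controlling the $v^{p(x)-p_-^\rho}$ factors without any higher integrability of $u$ is what determines $\bar\rho$ and makes it depend on $q_0, p_-^4$, and the log-H\"older modulus of $p$ in $B_4$.
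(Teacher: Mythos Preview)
Your overall route is the paper's: Caccioppoli from Lemma~\ref{cacciopoli-sin-acotacion}, Sobolev, Moser iteration, and extrapolation via Lemma~\ref{HKM}; and your treatment of the variable-exponent factors (choosing $\bar\rho$ so that $(p_+^{\bar\rho}-p_-^{\bar\rho})\max\{q',r_0\}\le p_-^4$ and using log-H\"older to kill the powers of $\rho$) matches the paper exactly.

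There is, however, a real gap in how you launch the iteration. In Lemma~\ref{subhar} the function is assumed bounded, so starting at any $t_*>q(p_+^4-1)$ is harmless. Here $u$ is \emph{not} assumed bounded---that is the whole point of the lemma---and a priori you only know $v=u_++1\in L^{(p_-^4)^*}(B_4)$ by Sobolev embedding. Your step ``Lemma~\ref{cacciopoli-sin-acotacion} combined with monotone convergence as $n\to\infty$'' produces an inequality that may read $\infty\le\infty$ for large $\gamma$, and then applying Sobolev to $v^{\beta/p_-}\eta^{p_+/p_-}$ requires this function to lie in $W^{1,p_-}$, hence $v^\beta\in L^1_{loc}$---which is precisely what you are trying to establish. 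Starting the iteration at $t_*>q(p_+^4-1)$ therefore presupposes integrability you do not have.

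The paper avoids this circularity by \emph{retaining the truncation through the Sobolev step}: it applies Sobolev to $L_n(v)\eta^{p_+/p_-}$ with $L_n(s)=\int_1^s F_n(\tau)^{1/p_-}\,d\tau$, which is globally Lipschitz in $s$ and hence always in $W^{1,p_-}$. Only after Sobolev does it let $n\to\infty$, noting that the right-hand side is finite as soon as $v\in L^{q\beta}(B_\rho)$. The iteration is then started at $\bar\beta_0=(p_-^4)^*$ (equivalently $\gamma$ at its minimum), where this integrability is known, and each step bootstraps the next: $v\in L^{\bar\beta_j}(B_{r_j})$ feeds into $v\in L^{\bar\kappa\bar\beta_j}(B_{r_{j+1}})$. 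Once this yields $u_+\in L^\infty_{loc}$, the statement for arbitrary $t>0$ follows (either by rerunning the argument with $u_+$ now bounded, or via Lemma~\ref{HKM} as you say).
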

\begin{proof} We start from \eqref{cacciopoli-F} with $\gamma\ge1$. Let now,
\[
L_n(s)=\int_1^s \big(F_n(\tau)\big)^{1/p_-}\,d\tau.
\]
Then,
\[
|\nabla L_n(u_++1)|^{p_-}= F_n(u_++1)|\nabla u_+|^{p_-}
\]
and we have,
\[
\begin{aligned}
\int|\nabla\big(\eta^{p_+/p_-}L_n(u_++1)\big)|^{p_-}&=\int F_n(u_++1)|\nabla u_+|^{p_-}\eta^{p_+}
+C\int L_n(u_++1)^{p_-}\eta^{p_+-p_-}|\nabla\eta|^{p_-}\\
&\le C\Big[\int F_n(u_++1)\eta^{p_+}
+\int u_+^{p}F_n(u_++1)\eta^{p_+-p}|\nabla\eta|^p\\
&+\int H(x)(u_++1)^{p-1}G_n(u_++1)\eta^{p_+}+\int L_n(u_++1)^{p_-}\eta^{p_+-p_-}|\nabla \eta|^{p_-}\Big]
\end{aligned}
\]

We bound, for $s\ge1$,
\[
\begin{aligned}
&F_n(s)\le s^{\gamma-1}\\
&L_n(s)\le F_n(s)^{1/p_-}(s-1)\quad\Rightarrow\quad L_n(u_++1)^{p_-}\le (u_++1)^{\gamma-1+p_-}\\
&s^{p-1}G_n(s)\le s^{p}F_n(s)\le s^{\gamma-1+p}\quad\Rightarrow\quad (u_++1)^{p-1}G_n(u_++1)\le (u_++1)^{\gamma-1+p}.
\end{aligned}
\]
Thus, by Sobolev inequality with $\kappa=\frac{\hat N}{\hat N-p_-}$ and $\hat N$ as in Lemma \ref{subhar},
\[
\begin{aligned}
\Big(\fint L_n(u_++1)^{\kappa p_-}\eta^{\kappa p_+}\Big)^{1/\kappa}&\le C\rho^{p_-}\fint|\nabla\big(\eta^{p_+/p_-}L_n(u_++1)\big)|^{p_-}\\
&\le C\rho^{p_-}\Big[\fint (u_++1)^{\gamma-1}\eta^{p_+}+\fint (u_++1)^{\gamma-1+p_-}\eta^{p_+-p_-}|\nabla \eta|^{p_-}\\
&+\fint (u_++1)^{\gamma-1+p}\eta^{p_+-p}|\nabla\eta|^{p}+\fint H(x)(u_++1)^{\gamma-1+p}\eta^{p_+}
\Big].
\end{aligned}
\]

We take $\bar\rho\le4$ such that $p_+^{\bar\rho}-p_-^{\bar\rho}<\min\{p_-^4/q', p_-^4/r_0\}$ with $q'$ and $r_0$ as in the proof of Lemma \ref{subhar}. Let $0< \sigma<\rho\le\bar\rho$,  $\eta\in C_0^\infty(B_{\rho})$, $0\le \eta\le 1$, $\eta\equiv1$ in $B_{\sigma}$, $|\nabla\eta|\le \frac C{\rho-\sigma}$ and let us proceed as in the proof of Lemma \ref{subhar}.

Observe that, by the choice of $\bar\rho$, there exists $s\ge p_+-p_-$ such that $sq'\le p_-^4$ and $sr_0\le p_-^4$ and we fix such an $s$ for the next steps.

We can proceed with the proof as long as  $u_+\in L^{q(\gamma-1+p_-)}(B_\rho)$ with $q$ as in the proof of Lemma \ref{subhar}. This is the case for any value of $\gamma\ge 1$ if $p_-\ge N$. If instead, $p_-<N$ there holds that $\hat N=N$ and $1<q<\frac N{N-p_-}$. Therefore, if we take $\gamma=1$ we will have $u_+\in L^{q(\gamma-1+p_-)}(B_\rho)$ as needed in order to continue with the estimates. Thus, we get, with $\beta=\gamma-1+p_-$,
\[
\begin{aligned}
\Big(\fint_{B_\sigma} L_n(u_++1)^{\kappa p_-}\Big)^{1/\kappa\beta}\le C\Big(\frac\rho\sigma\Big)^{N/\kappa\beta}\Big(\frac\rho{\rho-\sigma}\Big)^{p_+/\beta}
\Big(\fint_{B_\rho}(u_++1)^{q\beta}\Big)^{1/q\beta}.
\end{aligned}
\]

Since the right hand side is independent of $n$ and finite as long as $u_+\in L^{q\beta}(B_\rho)$ (for instance, if $\beta=p_-$ so that $q\beta\le p_-^*$), we can pass to the limit and get
\[
\Big(\fint_{B_\sigma}(u_++1)^{\kappa\beta}\Big)^{1/\kappa\beta}\le C\Big[1+(1+\beta)^{p_-/\beta}\Big(\frac\rho\sigma\Big)^{N/\kappa\beta}\Big(\frac\rho{\rho-\sigma}\Big)^{p_+/\beta}
\Big(\fint_{B_\rho}(u_++1)^{q\beta}\Big)^{1/q\beta}\Big].
\]

In fact, there holds that $$L_n(s)\to \frac{p_-}{\gamma-1+p_-}\big(s^{\frac{\gamma-1+p_-}{p_-}}-1\big)=\frac{p_-}\beta\big(s^{\frac\beta{p_-}}-1\big).$$

As in Lemma \ref{subhar} we call $\bar\kappa=\frac\kappa{q}$, $\bar\beta=q\beta$ and get
\begin{equation}\label{iterative-general}
\begin{aligned}
\Big(\fint_{B_\sigma}(u_++1)^{\bar\kappa\bar\beta}\Big)^{1/\bar\kappa\bar\beta}&\le C\Big[1+(1+\beta)^{qp_-/\bar\beta}\Big(\frac\rho\sigma\Big)^{N/\bar\kappa\bar\beta}
\Big(\frac\rho{\rho-\sigma}\Big)^{qp_+/\bar\beta}
\Big(\fint_{B_\rho}(u_++1)^{\bar\beta}\Big)^{1/\bar\beta}\Big]\\
&\le 2C(1+\beta)^{qp_-/\bar\beta}\Big(\frac\rho\sigma\Big)^{N/\bar\kappa\bar\beta}
\Big(\frac\rho{\rho-\sigma}\Big)^{qp_+/\bar\beta}
\Big(\fint_{B_\rho}(u_++1)^{\bar\beta}\Big)^{1/\bar\beta}.
\end{aligned}
\end{equation}

Now we can proceed as in Lemma \ref{subhar} with the iterative process. In each step we use that $u_+\in L^{\bar\beta_j}(B_{r_j})$ in order to deduce that $u_+\in L^{\bar\beta_{j+1}}(B_{r_{j+1}})$ and continue with the iteration, starting with $\bar\beta_0={p_-^4}^*$.

In this way we prove \eqref{eq-subhar-elliptic-+} for $t={p_-^4}^*$ if $p_-^4<N$, any positive number if $p_-^4\ge N$. Now, if $p_-^4<N$ and $0<t<{p_-^4}^*$ we use Lemma \ref{HKM} to get the result. In particular, for $\rho_2=\bar\rho$ we get \eqref{eq-subhar-elliptic-+} with $t=p_-^{4}$. So that, $u\in L^\infty(B_{\widetilde\rho})$ for any $\widetilde\rho<\bar\rho$. Therefore,
$u_+\in L^t(B_{\rho_2})$ for every $t>0$ if $\rho_2<\bar\rho$ and we can proceed with the proof without any restriction on $t$.  So that, \eqref{eq-subhar-elliptic-+} holds for every $t>0$ if $0<\rho_1<\rho_2<\bar\rho$.
\end{proof}

In a similar way, we can prove
\begin{lemm}\label{cacciopoli-sin-acotacion-u-} Let $u\in W^{1,p(x)}(B)$  such that $\Delta_{p(x)}
u\le H(x)(|u|+1)^{p(x)-1}$ in a ball $B$ and $\gamma\ge1$. Here $ H\ge0$ is a measurable function. Then, for $\eta
\in C_0^\infty(B)$ there holds that,
\begin{equation}\label{cacciopoli-F-u-}\begin{aligned}
\int_{B}F_n(u_-+1)|\nabla u_-|^{p_-}\eta^{p_+}&\le\int_{B} F_n(u_-+1)\eta^{p_+}
+C\int_{B}u_-^{p(x)}F_n(u_-+1)\eta^{p_+-p(x)}|\nabla\eta|^{p(x)}
\\&+C\int_{B} H(x)u_-^{p(x)-1}G_n(u_-+1)\eta^{p_+}\end{aligned}
\end{equation}
with $u_-=\max\{-u,0\}$, $C=C(p_+,p_-)$. Here $p_+=\max_{\overline B}p$, $p_-=\min_{\overline B} p$.

In \eqref{cacciopoli-F-u-}, the functions $F_n$ and $G_n$ are defined as in Lemma \ref{cacciopoli-sin-acotacion}.
\end{lemm}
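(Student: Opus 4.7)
The argument mirrors the proof of Lemma~\ref{cacciopoli-sin-acotacion}, substituting $u_-$ for $u_+$ and using the opposite-sign hypothesis $\Delta_{p(x)}u\le H(x)(|u|+1)^{p(x)-1}$. For $\gamma\ge 1$, the natural test function is
\[
\phi=G_n(u_-+1)\eta^{p_+}\in L^\infty(B)\cap W_0^{1,p(x)}(B),
\]
which is nonnegative and, because $G_n(1)=0$, is supported in $\{u<0\}$. Testing $\phi$ against the weak form of the hypothesis yields
\[
-\int|\nabla u|^{p(x)-2}\nabla u\cdot\nabla\phi\,dx\le\int H(x)(|u|+1)^{p(x)-1}\phi\,dx.
\]

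On the support of $\phi$ we have $\nabla u=-\nabla u_-$, $|u|=u_-$ and $|\nabla u|^{p(x)-2}\nabla u=-|\nabla u_-|^{p(x)-2}\nabla u_-$. Substituting $\nabla\phi=F_n(u_-+1)\eta^{p_+}\nabla u_-+p_+G_n(u_-+1)\eta^{p_+-1}\nabla\eta$ and collecting signs gives
\[
\int F_n(u_-+1)|\nabla u_-|^{p(x)}\eta^{p_+}\,dx\le p_+\!\int G_n(u_-+1)\eta^{p_+-1}|\nabla u_-|^{p(x)-1}|\nabla\eta|\,dx+\int H(x)(u_-+1)^{p(x)-1}G_n(u_-+1)\eta^{p_+}\,dx.
\]
From this point the argument is verbatim the one in Lemma~\ref{cacciopoli-sin-acotacion}: apply $G_n(u_-+1)\le u_-F_n(u_-+1)$ (valid because $F_n$ is nondecreasing on $[1,\infty)$), use Young's inequality with exponents $p(x)$ and $p'(x)$ to split the cross term into $\varepsilon F_n(u_-+1)|\nabla u_-|^{p(x)}\eta^{p_+}+C_\varepsilon u_-^{p(x)}F_n(u_-+1)\eta^{p_+-p(x)}|\nabla\eta|^{p(x)}$, and absorb the $\varepsilon$--term on the left. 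The pointwise bound $|\nabla u_-|^{p_-}\le 1+|\nabla u_-|^{p(x)}$ then upgrades the left-hand side to $|\nabla u_-|^{p_-}$ at the cost of producing the first term $\int F_n(u_-+1)\eta^{p_+}$ on the right of \eqref{cacciopoli-F-u-}.

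The only delicate point is the sign bookkeeping: since $\phi\ge 0$ and $\Delta_{p(x)}u$ is now bounded from \emph{above}, the weak inequality flips direction, but this reversal is exactly compensated by the identity $\nabla u=-\nabla u_-$ on $\{u<0\}$, leaving the positive quadratic term on the correct side. The form $u_-^{p(x)-1}$ stated in \eqref{cacciopoli-F-u-} (rather than $(u_-+1)^{p(x)-1}$) is recovered by the elementary splitting $(u_-+1)^{p(x)-1}\le 2^{p_+-1}(u_-^{p(x)-1}+1)$, and the resulting constant term is absorbed into the other right-hand side contributions.
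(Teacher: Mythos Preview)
Your argument is the same as the paper's intended one (the paper writes only ``In a similar way, we can prove'' and means: redo Lemma~\ref{cacciopoli-sin-acotacion} with the test function $\phi=G_n(u_-+1)\eta^{p_+}$, using $\nabla u=-\nabla u_-$ on $\{u<0\}$ to flip the sign). Up to and including the Young-inequality absorption and the upgrade $|\nabla u_-|^{p_-}\le 1+|\nabla u_-|^{p(x)}$, your proof is correct and matches Lemma~\ref{cacciopoli-sin-acotacion} step for step. One small technical point: $\phi$ is not in $L^\infty(B)$ in general (since $G_n$ grows linearly at infinity and $u$ is not assumed bounded); as in the companion lemma one only has $\phi\in W_0^{1,p(x)}(B)$, and the $L^\infty$ requirement in the definition of weak solution is handled by a routine truncation-and-limit argument.

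Your final paragraph, however, does not work. After the splitting $(u_-+1)^{p(x)-1}\le 2^{p_+-1}(u_-^{p(x)-1}+1)$ the leftover term is $\int H(x)\,G_n(u_-+1)\,\eta^{p_+}$, and this \emph{cannot} be absorbed into any of the three terms on the right of \eqref{cacciopoli-F-u-}: it carries the factor $H$ (so it does not fit into $\int F_n(u_-+1)\eta^{p_+}$ or the gradient-of-$\eta$ term), and near $\{u_-=0\}$ one has $G_n(u_-+1)\sim u_-$ while $u_-^{p(x)-1}G_n(u_-+1)\sim u_-^{p(x)}$, so no constant $C$ can dominate it by $C\int H\,u_-^{p(x)-1}G_n(u_-+1)\eta^{p_+}$ either. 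What your computation actually yields is the estimate with $(u_-+1)^{p(x)-1}$ in the last integrand, exactly parallel to Lemma~\ref{cacciopoli-sin-acotacion}; the appearance of $u_-^{p(x)-1}$ in the displayed statement is almost certainly a typographical slip in the paper (and is immaterial for the sequel, since in Lemma~\ref{weak maximum principle u-} one only uses the cruder bound $(u_-+1)^{p(x)-1}G_n(u_-+1)\le (u_-+1)^{\gamma-1+p(x)}$). So: keep your proof, drop the last paragraph, and record the conclusion with $(u_-+1)^{p(x)-1}$.
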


We also have,
\begin{lemm} \label{weak maximum principle u-}  Let $p$ log-H\"older continuous in $B_4$. Let $u\in W^{1,p(x)}(B_4)$  such that $\Delta_{p(x)}u\le H(x)(|u|+1)^{p(x)-1}$ in $B_4$ where $0\le H\in L^{q_0}(B_4)$ with  $ \max\{1,\frac N{p_-^{4}}\}<q_0\le \infty$.  Then, there exists $\bar\rho$ such that for every $0<\rho_1<\rho_2< \bar\rho<4$ and any $0<t<\infty$ there holds that,
\begin{equation}\label{eq-subhar-elliptic--}
\sup_{B_{\rho_1}}u_-\le C\Big(\frac{\rho_2}{\rho_2-\rho_1}\Big)^C
\Big(\fint_{B_{\rho_2}}(u_-+1)^{t}\Big)^{1/t}.
\end{equation}

 The constant $C$  depends  on $t,{p_+^4.p_-^4,M^{p_+^4-p_-^4}}$, $ \| H(x)\|_{L^{q_0}(B_4)}$ and $q_0$. $\bar\rho$ depends on $q, r_0,p_-^4$ for certain $q,r_0\in(1,\infty)$ such that $\frac1{q_0}+\frac1q+\frac1{r_0}=1$; and the log-H\"older modulus of continuity of $p$ in $B_4$. Here $M=\big(\fint_{B_4} |u|^{p_-^4}\big)^{1/p_-^4}$.

\end{lemm}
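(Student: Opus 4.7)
The plan is to reduce the statement to Lemma \ref{weak maximum principle} by exploiting the odd symmetry of the $p(x)$-Laplacian. If $v:=-u$, then $\nabla v=-\nabla u$, $|\nabla v|=|\nabla u|$, and $|\nabla v|^{p(x)-2}\nabla v=-|\nabla u|^{p(x)-2}\nabla u$, so $\Delta_{p(x)}v=-\Delta_{p(x)}u$. The hypothesis $\Delta_{p(x)}u\le H(x)(|u|+1)^{p(x)-1}$ in $B_4$ therefore becomes $\Delta_{p(x)}v\ge -H(x)(|v|+1)^{p(x)-1}$ in $B_4$, which is precisely the hypothesis of Lemma \ref{weak maximum principle}. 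Since $v_+=u_-$ and $|v|=|u|$, the quantity $M=\big(\fint_{B_4}|u|^{p_-^4}\big)^{1/p_-^4}$ and $\|H\|_{L^{q_0}(B_4)}$ are unchanged, so applying Lemma \ref{weak maximum principle} to $v$ delivers \eqref{eq-subhar-elliptic--} with constants of the same form.

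Alternatively, one can repeat the proof of Lemma \ref{weak maximum principle} verbatim, replacing the Cacciopoli inequality \eqref{cacciopoli-F} by its analogue \eqref{cacciopoli-F-u-} from Lemma \ref{cacciopoli-sin-acotacion-u-}. The steps would be: introduce $L_n(s)=\int_1^s F_n(\tau)^{1/p_-}\,d\tau$ and expand $|\nabla(\eta^{p_+/p_-}L_n(u_-+1))|^{p_-}$; combine with \eqref{cacciopoli-F-u-} and the pointwise estimates $L_n(u_-+1)^{p_-}\le (u_-+1)^{\gamma-1+p_-}$ and $(u_-+1)^{p-1}G_n(u_-+1)\le(u_-+1)^{\gamma-1+p}$; apply Sobolev's inequality and H\"older's inequality with the exponents $q,q_0,r_0$ satisfying $\frac1q+\frac1{q_0}+\frac1{r_0}=1$ (chosen as in Lemma \ref{subhar}); select $\bar\rho$ so small that $p_+^{\bar\rho}-p_-^{\bar\rho}<\min\{p_-^4/q',p_-^4/r_0\}$, which guarantees $sq'\le p_-^4$ and $sr_0\le p_-^4$ for some admissible $s\ge p_+^{\bar\rho}-p_-^{\bar\rho}$; pass to the limit $n\to\infty$ to obtain the analogue of \eqref{iterative-general} with $u_-+1$ in place of $u_++1$; and run the Moser iteration, initialized at $\bar\beta_0=(p_-^4)^*$, along geometrically shrinking balls. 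The final bootstrap that upgrades the $L^{p_-^4}$-bound on slightly smaller balls to an $L^\infty$-bound, and then removes the restriction on $t$ via Lemma \ref{HKM}, carries over unchanged.

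The only mild obstacle is purely bookkeeping: one must verify that the proof of Lemma \ref{cacciopoli-sin-acotacion-u-} (which is the counterpart of Lemma \ref{cacciopoli-sin-acotacion} with the roles of sub/supersolution reversed) produces the same right-hand-side structure, so that every subsequent Sobolev/H\"older estimate in Lemma \ref{weak maximum principle} remains applicable. Both approaches yield the stated dependence: the constant $C$ depends on $t,p_+^4,p_-^4,M^{p_+^4-p_-^4},\|H\|_{L^{q_0}(B_4)}$ and $q_0$, while $\bar\rho$ depends on $q,r_0,p_-^4$ and the log-H\"older modulus of $p$ in $B_4$.
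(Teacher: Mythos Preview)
Your proposal is correct. The paper itself gives no explicit proof of this lemma; it merely writes ``We also have,'' immediately after stating Lemma~\ref{cacciopoli-sin-acotacion-u-}, indicating that one should repeat the argument of Lemma~\ref{weak maximum principle} with \eqref{cacciopoli-F-u-} in place of \eqref{cacciopoli-F}. Your second approach is therefore exactly the paper's intended route. Your first approach---applying the odd symmetry $\Delta_{p(x)}(-u)=-\Delta_{p(x)}u$ to reduce directly to Lemma~\ref{weak maximum principle}---is a clean shortcut that the paper does not spell out but which is entirely valid and in fact renders Lemma~\ref{cacciopoli-sin-acotacion-u-} unnecessary for this particular statement.
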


We conclude
\begin{prop}[Weak maximum principle]\label{local bounds} Let $\Omega\subset\R^N$ bounded and $p$ log-H\"older continuous in $\Omega$.  Let $u\in W^{1,p(x)}(\Omega)$  such that $\Delta_{p(x)}u\ge- H(x)(|u|+1)^{p(x)-1}$ in $\Omega$ with $0\le H\in L^{q_0(x)}(\Omega)$ with $q_0\in C(\Omega)$, $\max\{1,\frac N{p(x)}\}<q_0(x)$ for every $x\in\Omega$. Let $\Omega'\subset\subset\Omega$. Then, $u$ is bounded from above in $\Omega'$.
More precisely, for every $0<t<\infty$,
\begin{equation}\label{uniform bound}
\sup_{\Omega'}u\le \widetilde C \Big[1+\| u\|_{L^{t}({\Omega'')}}\Big]
\end{equation}
where $\Omega''=\big\{x\in\Omega\,,\,\mbox{dist}(x,\Omega')<\frac12\mbox{dist}
(\Omega',\partial\Omega)\big\}$.
 Here $\widetilde C$ depends on $t$, $\Omega'$, $p(x)$, $q_0(x)$, $\||H|^{q_0(x)}\|_{L^1(\Omega)}$ and $\||u|^{p(x)}\|_{L^1(\Omega)}$.

\smallskip

If $\Delta_{p(x)}u\le H(x)(|u|+1)^{p(x)-1}$ in $\Omega$, there holds that $u$ is bounded from below by $-\widetilde C \Big[1+\| u\|_{L^{t}(B_{\Omega'')}}\Big]$.

\end{prop}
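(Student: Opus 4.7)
The plan is to localize the interior estimate of Lemma \ref{weak maximum principle} via a finite covering of $\overline{\Omega'}$ by small balls contained in $\Omega''$. Since $p$ and $q_0$ are continuous on the compact set $\overline{\Omega''}$ and $\max\{1,N/p(x)\}<q_0(x)$ pointwise, uniformity gives an $r_0>0$ such that for every $x_0\in\overline{\Omega'}$ one has $B_{4r_0}(x_0)\subset\Omega''$ and, writing $p^{x_0}_\pm:=p_\pm^{4r_0}(x_0)$ and $q^{x_0}_0:=\inf_{B_{4r_0}(x_0)}q_0$, the condition $\max\{1,N/p^{x_0}_-\}<q^{x_0}_0$ holds. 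Moreover the log-H\"older modulus of $p$ on $B_{4r_0}(x_0)$ is controlled by that on $\Omega$, and this control is preserved after rescaling to $B_4$, since for $r_0,t\in(0,1)$ one has $|\log(r_0 t)|\ge|\log t|$.

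On each ball $B_{4r_0}(x_0)$ the hypothesis of Lemma \ref{weak maximum principle} is satisfied with the constant exponent $q^{x_0}_0$ in place of the variable $q_0(\cdot)$: the local constant-exponent $L^{q^{x_0}_0}$-norm of $H$ is dominated by $\int_\Omega(1+|H|^{q_0(x)})\,dx$ using the pointwise inequality $|H|^{q^{x_0}_0}\le 1+|H|^{q_0(x)}$ (valid since $q^{x_0}_0\le q_0(x)$ and $H\ge 0$), and the quantity $M=\bigl(\fint_{B_{4r_0}(x_0)}|u|^{p^{x_0}_-}\bigr)^{1/p^{x_0}_-}$ is controlled analogously by $\int_\Omega|u|^{p(x)}\,dx$ and $|\Omega''|$. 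A (scaled) application of Lemma \ref{weak maximum principle} then yields, for some uniform radii $0<\rho_1<\rho_2<4r_0$ and any $t>0$,
\[
\sup_{B_{\rho_1}(x_0)}u_+\le C\Bigl(\fint_{B_{\rho_2}(x_0)}(u_++1)^{t}\Bigr)^{1/t},
\]
with $C$ depending only on the quantities listed in the statement. A finite subcovering of $\overline{\Omega'}$ by balls $B_{\rho_1}(x_i)$ and the sum of the resulting estimates produce \eqref{uniform bound}.

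The main obstacle is the scaling step: Lemma \ref{weak maximum principle} is stated on the reference ball $B_4$, while we need it on $B_{4r_0}(x_0)$ with $r_0$ possibly very small. I would handle this either by rescaling $v(y):=u(x_0+r_0 y)$ and absorbing the factors $r_0^{N-\tilde p(y)}$ that appear in the transformed weak formulation into the constants---they are controlled precisely by the log-H\"older bound $r_0^{-(p^{x_0}_+-p^{x_0}_-)}\le K_{r_0}$ recalled in the Notation section---or, more cleanly, by observing that the Caccioppoli--Sobolev--Moser chain built up in Lemmas \ref{cacciopoli-sin-acotacion} and \ref{weak maximum principle} uses only local estimates on generic balls and therefore reproduces verbatim on a ball of radius $4r_0\le 4$. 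The case $\Delta_{p(x)}u\le H(x)(|u|+1)^{p(x)-1}$ is identical, with Lemma \ref{weak maximum principle u-} in place of Lemma \ref{weak maximum principle}, giving the corresponding bound on $u_-$.
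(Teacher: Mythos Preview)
Your proposal is correct and follows essentially the same route as the paper: localize around each $x_0\in\Omega'$, verify that on a small enough ball the constant exponent $\bar q_0:=\inf q_0$ satisfies $\bar q_0>\max\{1,N/p_-\}$, apply Lemma~\ref{weak maximum principle} there (controlling $\|H\|_{L^{\bar q_0}}$ and $M$ by the variable-exponent quantities exactly as you indicate), and finish by compactness. The only cosmetic difference is that the paper uses the natural $p(x)$-Laplacian rescaling $\bar u(x)=u(x_0+Rx)/R$, $\bar H(x)=R\,H(x_0+Rx)$ (so that $\Delta_{\bar p}\bar u\ge -\bar H(|\bar u|+1)^{\bar p-1}$ holds on $B_4$ without extraneous powers of $R$), and allows the localization radius $\bar r$ to depend on $x_0$ before invoking compactness, whereas you secure a uniform $r_0$ up front via uniform continuity; both variants work.
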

\begin{proof} Let $0<R= \min\{1,\frac14\mbox{dist}(\Omega',\partial\Omega)\}$.  For $x_0\in\Omega'$, let $\bar u(x)=\frac{u(x_0+Rx)}R$, $\bar p(x)=p(x_0+Rx)$ and $\bar H(x)=RH(x_0+Rx)$. Then, $\Delta_{\bar p(x)}\bar u\ge- \bar H(x)(|\bar u|+1)^{\bar p(x)-1}$ in $B_4$.

We claim that there exists $0<\bar r<1$,  $\bar q_0>0$,  possibly depending on $x_0$, such that $q_0(x_0+Rx)\ge \bar q_0>\max\{1,\frac N{\bar p_-^{4\bar r}}\}$ for every $x\in B_{4\bar r}$.
In fact, if $\bar p(0)<N$ we let $\rho_1$ such that $\bar p(x)<N$ in $B_{ 4\rho_1}$. Then, let $\ep>0$ such that $q_0(x_0)\ge \frac N{\bar p(0)}+3\ep$ and $\rho_2\le \rho_1$  such that $q_0(x_0+Rx)\ge \bar q_0:=\frac N{\bar p(0)}+2\ep$ in $B_{4\rho_2}$. Finally, $\bar r\le\rho_2$ such that $\frac N {\bar p(x)}-\frac N{\bar p(0)}<\ep$ in $B_{4\bar r}$. So, in $B_{4\bar r}$ we have $q_0(x_0+Rx)\ge \bar q_0>\max\{1,\frac N{p_-^{4\bar r}}\}$.

 Now, if $\bar p(0)\ge N$, we let first $\rho_1$ and $\ep>0$ such that $q_0(x_0+Rx)\ge \bar q_0:=1+2\ep$ in $B_{4\rho_1}$ and then, $\bar r\le\rho_1$ such that $\frac N{\bar p(x)}\le 1+\ep$ in $B_{4\bar r}$. So we have $q_0(x_0+Rx)\ge \bar q_0>\max\{1,\frac N{\bar p_-^{4\bar r}}\}$ in $B_{4\bar r}$.

We can assume that $\bar r$ is small so that,
$\bar p_+^{4\bar r}- \bar p_-^{4\bar r}<\min\{p_1/q',p_1/r_0\}$ with $q$ and $r_0$ as in Lemma~\ref{subhar}, ($\frac 1{\bar q_0}+\frac1q+\frac 1{r_0}=1$). Then, by Lemma \ref{weak maximum principle} (observe that we may take $\bar\rho=4\bar r$ in that lemma by the conditions imposed to $\bar r$), for every $0<t<\infty$,
\[
\sup_{B_{\bar r}}\bar u\le C\Big[1+\|\bar u\|_{L^{t}(B_{2\bar r)}}\Big]
\]
with $C$ depending on $t$, $\bar r$, $p_1$, $p_2$, the log-H\"older modulus of continuity of $p$ in $\Omega''$, $\bar q_0$, $r_0$, $\|\bar H\|_{L^{\bar q_0}(B_{4\bar r})}$ and $M^{p_2-p_1}$ where $M=\|u\|_{L^{p_1}(\Omega'')}$.

Observe that $\|\bar H\|_{L^{\bar q_0}(B_{4\bar r})}\le C\big[1+\|||H|^{q_0(x)}\|_{L^1(\Omega)}\big]^{1/\inf_{\Omega}q_0}$ with $C$ depending on $R$, $\bar r$ and $q_0$.

Thus, any point $x_0\in\Omega'$ has a neighborhood $B_{\bar r R}(x_0)$ where
\[
\sup_{B_{R \bar r}(x_0)} u\le \widetilde C\Big[1+\| u\|_{L^{t}(B_{2R\bar r)(x_0)}}\Big]
\]
with $\widetilde C$ depending on  the neighborhood, on $t$, $p(x)$, $q(x)$, $ \|  |H(x)|^{q_0(x)}\|_{L^1(\Omega)}^{1/\inf_{\Omega}q_0}$ and $\||u|^{p(x)}\|^{1/\inf_{\Omega}p}_{L^1(\Omega)}$.

Since $\Omega'$ is compact, we get the result on the upper bound.

Analogously, if $\Delta_{p(x)}u\le H(x)|u|^{p(x)-1}$ in $\Omega$ we find a similar uniform bound from above for $u_-$ in $\Omega'$. So, we get the lower bound.
\end{proof}

\medskip

As a corollary we get local bounds for  weak solutions to \eqref{p(x)-f}. There holds,
\begin{coro} Let $\Omega\subset\R^N$ be bounded and $p$ log-H\"older continuous in $\Omega$. Let $ u\in W^{1,p(x)}(\Omega)$ a weak solution to
\[
\Delta_{p(x)}u=f\quad\mbox{in}\quad\Omega
\]
with $f\in L^{q_0(x)}(\Omega)$ with $q_0\in C(\Omega)$ such that $\max\{1,\frac N{p(x)}\}<q_0(x)$ in $\Omega$. Then, $u$ is locally bounded.
\end{coro}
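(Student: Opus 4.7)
The plan is to observe that this corollary reduces immediately to Proposition \ref{local bounds}. Since $u$ is a weak solution to $\Delta_{p(x)}u = f$, we have both one-sided inequalities simultaneously: $\Delta_{p(x)}u \ge f$ and $\Delta_{p(x)}u \le f$. To convert these into the form required by Proposition \ref{local bounds}, I would simply set $H(x) := |f(x)|$. Because $(|u(x)|+1)^{p(x)-1} \ge 1$ pointwise, we get
\[
\Delta_{p(x)}u = f \ge -|f| \ge -H(x)(|u|+1)^{p(x)-1}
\quad \text{and} \quad
\Delta_{p(x)}u = f \le |f| \le H(x)(|u|+1)^{p(x)-1}
\]
in $\Omega$. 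By the hypothesis on $f$, we have $H \in L^{q_0(x)}(\Omega)$ with the same variable exponent $q_0$, and $q_0 \in C(\Omega)$ satisfies $\max\{1, N/p(x)\} < q_0(x)$ everywhere in $\Omega$.

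Therefore both hypotheses of Proposition \ref{local bounds} are satisfied with this choice of $H$. Applying that proposition to the first inequality yields an upper bound for $u$ on any $\Omega' \subset\subset \Omega$, while applying it to the second yields a lower bound for $u$ on the same set (equivalently, an upper bound for $u_-$). Combining these two one-sided bounds shows that $u \in L^\infty_{\mathrm{loc}}(\Omega)$, with the explicit local bound
\[
\|u\|_{L^\infty(\Omega')} \le \widetilde C \bigl[1 + \|u\|_{L^t(\Omega'')}\bigr]
\]
for any $0<t<\infty$ and $\Omega''$ a slightly larger compact subset, where $\widetilde C$ has the dependence specified in Proposition \ref{local bounds}. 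No genuinely new obstacle arises; the only thing to check is that the function $H := |f|$ lies in $L^{q_0(x)}(\Omega)$ with the same $q_0$, which is automatic from the hypothesis $f \in L^{q_0(x)}(\Omega)$.
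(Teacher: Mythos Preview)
Your proof is correct and follows essentially the same approach as the paper: set $H(x)=|f(x)|$, observe that $|\Delta_{p(x)}u|=|f(x)|\le H(x)(|u|+1)^{p(x)-1}$, and apply Proposition~\ref{local bounds} to obtain both the upper and lower local bounds.
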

\begin{proof} Let $ H(x)=|f(x)|$. Then,
\[
|\Delta_{p(x)} u|=|f(x)|
\le H(x) (|u|+1)^{p(x)-1}.
\]

The result follows by applying Propositon \ref{local bounds}.
\end{proof}

\bigskip

Now, we prove Harnack inequality for solutions to \eqref{p(x)-f}.

\begin{proof}[Proof of Theorem \ref{har}] Without loss of generality we may assume that $x_0=0$.

Let $u$ and $f$ as in the statement. Let $\bar p(x)=p(Rx)$.

If $f\not\equiv0$ in $B_{4R}$, let $\widetilde H(x)=R|f(Rx)|$,
$$\bar u(x)=1+\|\widetilde H\|_{L^{q_0}(B_4)}^{\frac1{p_-^{4R}-1}}+\frac{u(Rx)}R$$ and $$H(x)=\frac{\widetilde H(x)}{\|\widetilde H\|_{L^{q_0}(B_4)}}.$$

If $f\equiv0$ in $B_{4R}$, let
$$\bar u(x)=1+\frac{u(Rx)}R$$ and $$H(x)\equiv0.$$
Then,
\[
\max_{B_4}\bar p=\max_{B_{4R}} p,\quad \min_{B_4}\bar p=\min_{B_{4R}} p,\]
and for $x,y\in B_4$,
\[|\bar p(x)-\bar p(y)|\le \omega_{4R}(R|x-y|)\le \omega_{4R}(|x-y|)
\]
if $0<R\le 1$
and,
\[\big|\Delta_{\bar p(x)}\bar u(x)\big|=\big|Rf( Rx)\big|\le  H(x)\Big(1+\|\widetilde H\|_{L^{q_0}(B_4)}^{\frac1{p_-^{4R}-1}}+\big(\frac{u(Rx)}R\big)\Big)^{ p_-^{4R}-1}\le H(x) \bar u^{\bar p(x)-1}.
\]

Therefore, we can apply Lemmas \ref{subhar} and \ref{superhar} (recall that we already know that $u$ is locally bounded and therefore, $\bar u$ is bounded in $B_4$) with $\rho_1=1$, $\rho_2=2$ and $t=t_0$
to obtain
\[
\sup_{B_1}\bar u\le C\Big(\fint_{B_{2}}\bar u^{t_0}\Big)^{1/t_0}\le
C\inf_{B_{1}}\bar u.
\]

Recall that $\| H\|_{L^{q_0}(B_4)}=1$ or $\| H\|_{L^{q_0}(B_4)}=0$. Thus, $C$ is independent of $H$ and so it depends on $f$  only through its dependence on $\bar u$.

\smallskip

Since $\bar u(x)=\frac{u(Rx)+R+R\|\widetilde H\|_{L^{q_0}(B_4)}^{\frac1{p_-^{4R}-1}}}R$ there holds that,
\[
\sup_{B_{R}}u\le C\big[\inf_{B_{R}} u+R+R\|\widetilde H\|_{L^{q_0}(B_4)}^{\frac1{p_-^{4R}-1}}\big].
\]

Now, $\|\widetilde H\|_{L^{q_0}(B_4)}=R^{1-\frac N{q_0}}\|f\|_{L^{q_0}(B_{4R})}$. And,
\begin{equation}\label{final}
\begin{aligned}
\bar M_1^{\bar p_+^{4}-\bar p_-^4}: =\Big(\fint_{B_4}\bar u^{sq'}\Big)^{\frac{\bar p_+^{4}-\bar p_-^4}{sq'}}
&\le C\Big[R^{-1}\Big(\fint_{B_{4R}}u^{sq'}\Big)^{1/sq'}+1+\|\widetilde H\|_{L^{q_0}(B_4)}^{\frac1{p_-^{4R}-1}}\Big]^{p_+^{4R}-p_-^{4R}}\\
&\le C\Big[\Big(\|u\|_{L^{sq'}(B_{4R})}+1+\big(R^{1-\frac N{q_0}}\|f\|_{L^{q_0}(B_{4R})}\big)^{\frac1{p_-^{4R}-1}}\Big]^{p_+^{4R}-p_-^{4R}}
\end{aligned}
\end{equation}
since $R^{-(p_+^{4R}-p_-^{4R})}\le C$  with $C$ independent of $R$. In particular, $\bar M_1^{\bar p_+^{4}-\bar p_-^4}$ is bounded independently of $R$.

The same kind of bound holds for $\bar M_2^{\bar p_+^4-\bar p_-^4}$.

So, the theorem is proved.
\end{proof}

%
%
%
%
%
\begin{rema} Observe that, since $q_0>\frac N{p_-^{4R}}$ there holds that
\[
1+\frac{1-\frac N{q_0}}{p_-^{4R}-1}>1-\frac{p_-^{4R}-1}{p_-^{4R}-1}=0.
\]

Thus, \eqref{har-ineq} can be stated as:
\begin{equation}\label{har-ineq-alt}
\sup_{B_{R}(x_0)}u\le C\big[\inf_{B_{R}(x_0)} u+R+R^\delta L\big]
\end{equation}
for a certain $\delta>0$.

The power $\delta$ can be made independent of $R$. In fact, we may take $\delta=
1+\frac{1-\frac N{q_0}}{p_1-1}>0$ if $N\ge q_0>\frac N{p_1}$ with $p_1=\inf_\Omega p$ and $\delta=
1+\frac{1-\frac N{q_0}}{p_2-1}>1$ if $ q_0> N$ with $p_2=\sup_\Omega p$.

Here $L:= \big(1+\|f\|_{L^{q_0}(\Omega)}\big)^{\frac1{p_1-1}}\ge \|f\|_{L^{q_0}(B_{4R})}^{\frac1{p_-^{4R}-1}}$.
\end{rema}

\medskip

\begin{rema} Observe that, since $p$ is continuous in $\Omega$, if $R$ is small enough, we may choose $s\ge p_+^{4R}-p_-^{4R}$ such that $sq'\le p_-^{4R}$ and $sr_0\le p_-^{4R}$. So, the constant $C$ in \eqref{eq-subhar} depends on $u$ only through $\||u|^{p(x)}\|_{L^{1}(B_{4R}(x_0))}^{p_+^{4R}-p_-^{4R}}$.

A similar comment applies to \eqref{har-ineq} and \eqref{har-ineq-alt}.

\end{rema}

From Harnack inequality we get H\"older continuity of  weak solutions. There holds,
\begin{coro}\label{coro-holder}  Let $\Omega\subset\R^N$ bounded and $p$ log-H\"older continuous in $\Omega$ with $1<p_1\le p(x)\le p_2<\infty$ in $\Omega$. Let $f\in L^{q_0}(\Omega)$ with $\max\{1,\frac N{p_1}\}<q_0\le \infty$. Let $u$ be a  weak solution to
\begin{equation}\label{eqn}
\Delta_{p(x)} u=f\quad\mbox{in}\quad\Omega.
\end{equation}
Then, $u$ is locally H\"older continuous in $\Omega$ with constant and exponent depending only on the compact subdomain and on $p(x),\ q_0$, $\|f\|_{L^{q_0}(\Omega)}$ and $M^{{p_2}-{p_1}}$ where $M=\||u|^{p(x)}\|_{L^1(\Omega)}$.
\end{coro}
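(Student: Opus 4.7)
The plan is to derive Hölder continuity from Harnack inequality by showing geometric decay of the oscillation on nested balls, in the classical De Giorgi--Nash--Moser style.

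First I would fix $\Omega' \subset\subset \Omega$ and choose $R_0 \leq \min\{1, \tfrac14\mathrm{dist}(\Omega', \partial\Omega)\}$. By the corollary of Proposition \ref{local bounds}, $u$ is bounded on a neighborhood $\Omega''$ containing every $B_{4R_0}(x_0)$ with $x_0 \in \Omega'$. For $x_0 \in \Omega'$ and $0 < R \leq R_0/4$ I set $M_R = \sup_{B_R(x_0)} u$, $m_R = \inf_{B_R(x_0)} u$, and $\omega(R) = M_R - m_R$. The key observation is that, since $\Delta_{p(x)}$ is the divergence of an odd function of $\nabla u$, the functions $v_1 := M_{4R} - u$ and $v_2 := u - m_{4R}$ are nonnegative weak solutions in $B_{4R}(x_0)$ of $\Delta_{p(x)} v_1 = -f$ and $\Delta_{p(x)} v_2 = f$ respectively, both with right-hand sides of the same $L^{q_0}$-norm as $f$.

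Applying Theorem \ref{har} to each of $v_1, v_2$ on $B_R(x_0)$ yields
\[
M_{4R} - m_R \leq C[M_{4R} - M_R + R(1+\mu)], \qquad M_R - m_{4R} \leq C[m_R - m_{4R} + R(1+\mu)].
\]
Adding the two and rearranging produces the oscillation decay
\[
\omega(R) \leq \gamma\, \omega(4R) + K R (1 + \mu), \qquad \gamma := \frac{C-1}{C+1} \in (0,1),
\]
with $K$ depending only on $C$. Iterating this relation along the geometric sequence $R_k = R_0/4^k$, via the standard lemma for such recurrences (Lemma 8.23 of Gilbarg--Trudinger, say), gives $\omega(R_k) \leq C' R_k^\alpha$ for some $\alpha \in (0,1)$ determined only by $\gamma$, with $C'$ linear in $\omega(R_0) + 1 + \mu$. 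Interpolating between consecutive radii and covering $\Omega'$ by finitely many balls then yields $|u(x) - u(y)| \leq C|x-y|^\alpha$ for all $x, y \in \Omega'$.

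The hardest step, and what must be verified with care, is that the Harnack constant $C$ and the quantity $\mu$ are uniform in $R$ and $x_0$ as $R \to 0$, so that the contraction factor $\gamma$ is genuinely fixed across all scales. By the Remark following Theorem \ref{har}, for $R$ small enough one may select $s$ so that the dependence of the Harnack constant on its solution reduces to the single quantity $\||\cdot|^{p(x)}\|_{L^1(B_{4R})}^{p_+^{4R}-p_-^{4R}}$; applied to $v_1, v_2$ this is controlled by $(\|u\|_{L^\infty(\Omega'')}+1)^{p_2-p_1}$, and $\|u\|_{L^\infty(\Omega'')}$ is in turn bounded in terms of $\||u|^{p(x)}\|_{L^1(\Omega)}$ by Proposition \ref{local bounds}. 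The log-Hölder bound $R^{-(p_+^{4R}-p_-^{4R})} \leq K_{R_0}$ together with $q_0 > \max\{1, N/p_1\}$ similarly forces $\mu^{p_+^{4R}-p_-^{4R}}$ to be uniformly bounded in terms of $\|f\|_{L^{q_0}(\Omega)}$, $p_1$, $p_2$, and $K_{R_0}$. With these uniformities in hand the iteration closes and delivers the stated Hölder estimate with the advertised dependencies.
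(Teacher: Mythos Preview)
Your proposal is correct and follows essentially the same route as the paper's proof: apply Theorem~\ref{har} to the translates $M-u$ and $u-m$ on nested balls, add the two resulting inequalities to obtain a geometric oscillation decay $\mathrm{osc}_{j+1}u\le\nu\,\mathrm{osc}_j u + CR^\delta$ with $0<\nu<1$, and then invoke the standard iteration lemma from \cite{GT}. The paper halves the radius where you quarter it, but that is immaterial; if anything, your version is more careful in spelling out why the Harnack constant (and hence $\nu$) is uniform across scales, which the paper leaves implicit.
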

\begin{proof} Once we have Harnack inequality, the proof is standard. Let $\Omega'\subset\subset\Omega$. There exist $L,R_0,\delta>0$ such that for any nonnegative weak solution $v$ of \eqref{eqn}, any $x_0\in\Omega'$ and $0<R\le R_0$,
\begin{equation}\label{harnack}
\sup_{B_{R}(x_0)} v\le C\big[\inf_{B_{R}(x_0)} v+ R+R^\delta L\big].
\end{equation}

Now, apply \eqref{harnack} with $R=2^{-(j+1)}R_0$ to the functions $v_1=M_j-u(x)$ and $v_2=u(x)-m_j$ where $M_j=\sup_{B_{2^{-j}R_0}(x_0)}u$, $m_j=\inf_{B_{2^{-j}R_0}(x_0)}u$ to obtain that
\[
\mbox{osc}_{j+1}\ u\le\nu\ \mbox{osc}_j \ u+C(L) R^\delta
\]
with $0<\nu<1$ to obtain the result (see \cite{GT} for the details). The constant and exponent of the H\"older continuity in $\Omega'$ depend only on $\nu$, $C(L)$ and $\delta$.
\end{proof}

\medskip

By applying Corollary \ref{coro-holder} on small enough neighborhoods of points  $x_0\in\Omega'\subset\subset\Omega$ --as in Proposition \ref{local bounds}-- we get local H\"older continuity with variable $q_0$. There holds,
\begin{coro}\label{holder-q(x)}Let $\Omega\subset\R^N$ bounded and $p$ log-H\"older continuous in $\Omega$ with $1<p_1\le p(x)\le p_2<\infty$ in $\Omega$. Let $f\in L^{q_0(x)}(\Omega)$ with $q_0\in C(\Omega)$ and $\max\{1,\frac N{p(x)}\}<q_0(x)$ in $\Omega$. Let $u$ be a  weak solution to
\begin{equation}\label{eqn}
\Delta_{p(x)} u=f\quad\mbox{in}\quad\Omega.
\end{equation}
Then, $u$ is locally H\"older continuous in $\Omega$ with constant and exponent depending only on  the compact subdomain and on $p(x),\ q_0(x)$, $\||f|^{q_0(x)}\|_{L^{1}(\Omega)}$ and $\||u|^{p(x)}\|_{L^1(\Omega)}^{p_2-p_1}$.
\end{coro}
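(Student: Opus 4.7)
The plan is to reduce the variable-$q_0(x)$ statement to Corollary~\ref{coro-holder} by a localization argument, completely analogous to the one used in the proof of Proposition~\ref{local bounds}. Fix a compact subdomain $\Omega'\subset\subset\Omega$ and pick an arbitrary $x_0\in\Omega'$.

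First I will produce a radius $r(x_0)>0$ and a constant $\bar q_0(x_0)$ such that the ball $B_{r(x_0)}(x_0)\subset\Omega$ satisfies $q_0(x)\ge \bar q_0(x_0)>\max\{1,N/p_-^{r(x_0)}\}$ throughout, where $p_-^{r(x_0)}=\inf_{B_{r(x_0)}(x_0)}p$. This follows from the continuity of $p$ and $q_0$ and the strict pointwise inequality $q_0(x_0)>\max\{1,N/p(x_0)\}$: pick $\varepsilon>0$ with $q_0(x_0)-\max\{1,N/p(x_0)\}>3\varepsilon$, then shrink $r(x_0)$ so that $|q_0(x)-q_0(x_0)|<\varepsilon$ and $|N/p(x)-N/p(x_0)|<\varepsilon$ on $B_{r(x_0)}(x_0)$, and set $\bar q_0(x_0):=q_0(x_0)-\varepsilon$.

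On such a ball the restriction of $f$ lies in $L^{\bar q_0(x_0)}$, with norm controlled by $\||f|^{q_0(x)}\|_{L^1(\Omega)}$: splitting according to $\{|f|\le 1\}$ and $\{|f|>1\}$ and using $\bar q_0(x_0)\le q_0(x)$ on the latter gives
\[
\int_{B_{r(x_0)}(x_0)}|f|^{\bar q_0(x_0)}\,dx \le |B_{r(x_0)}(x_0)|+\int_{\Omega}|f|^{q_0(x)}\,dx.
\]
Then Corollary~\ref{coro-holder} applied on $B_{r(x_0)}(x_0)$ with the constant exponent $\bar q_0(x_0)$, using the log-H\"older modulus of $p$ inherited from $\Omega$, yields that $u$ is H\"older continuous on $B_{r(x_0)/2}(x_0)$ with constant and exponent depending only on $r(x_0)$, $\bar q_0(x_0)$, that log-H\"older modulus, the integral of $|f|^{q_0(x)}$ above, and $\||u|^{p(x)}\|_{L^1(\Omega)}^{p_2-p_1}$.

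Finally, I cover the compact set $\Omega'$ by finitely many balls $B_{r(x_j)/2}(x_j)$ of the above type. On each ball one has a H\"older bound with some exponent $\alpha_j$ and constant $C_j$; taking the minimum exponent and maximum constant, together with the standard chaining argument to pass from local oscillation bounds to a uniform H\"older estimate on $\Omega'$, yields the claim. The main subtlety is that the radii $r(x_j)$ and thresholds $\bar q_0(x_j)$ on the finite cover must depend only on the data listed in the statement and not on the specific points; this is guaranteed by continuity of $p$ and $q_0$ on $\overline{\Omega'}$ together with the boundedness of $R^{-(p_+^R-p_-^R)}$ under log-H\"older continuity, a feature used repeatedly throughout the paper.
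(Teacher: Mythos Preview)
Your proposal is correct and follows essentially the same approach as the paper: localize around each point of the compact subdomain to a small ball where a constant exponent $\bar q_0$ dominates $\max\{1,N/p_-\}$, apply Corollary~\ref{coro-holder} there, and conclude by compactness---exactly the reduction the paper indicates by writing ``by applying Corollary~\ref{coro-holder} on small enough neighborhoods of points $x_0\in\Omega'$ as in Proposition~\ref{local bounds}.'' Your final remark that the radii ``must not depend on the specific points'' is slightly misstated (they do, but the finite cover absorbs this into the allowed dependence on the compact subdomain), though this does not affect the argument.
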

%
%
%

\end{section}

\bigskip

\begin{section}{Harnack inequality for solutions to general elliptic equations}
\label{sect-general-eqs}
In this section we will generalize the results of Section \ref{sect-p-laplace} to elliptic equations with $p(x)-$type growth. More precisely,
\begin{equation}\label{elliptic}
\mbox{div\,}A(x ,u, \nabla u)=B(x,u,\nabla u)\quad\mbox{in}\quad\Omega.
\end{equation}

We assume that for every  $M_0>0$ there exist a constant $\alpha$ and nonnegative functions $g_0, C_0\in L^{q_0}(\Omega)$,  $g_1,C_1\in L^{q_1}(\Omega)$, $f,C_2\in L^{q_2}(\Omega)$, $K_2^{p(x)}\in L^{t_2}(\Omega)$, $K_1\in L^\infty(\Omega)$ for some $\max\{1,\frac N{p_1-1}\}<q_0,q_1\le\infty$ ($p_1=\inf_\Omega p$),
$\max\{1,\frac N{p_1}\}<q_2,t_2\le\infty$,     such that, for every $x\in\Omega$, $|s|\le M_0$, $\xi\in\R^N$,
\begin{enumerate}
\item $A(x,s,\xi)\cdot\xi\ge \alpha|\xi|^{p(x)}-C_0|s|^{p(x)}-g_0(x)$.

\item $\big|A(x,s,\xi)\big|\le g_1(x)+C_1|s|^{p(x)-1}+K_1|\xi|^{p(x)-1}$.

\item $\big|B(x,s,\xi)\big|\le f(x)+C_2|s|^{p(x)-1}+K_2|\xi|^{p(x)-1}$

\end{enumerate}

\medskip

We start with a Cacciopoli type estimate.
\begin{lemm}\label{lemma-cacciopoli-elliptic} Let $u\ge1$ and bounded be such that $\mathrm{div}A(x ,u, \nabla u)\ge - \big(H_2(x)u^{p(x)-1}+G_2(x)|\nabla u|^{p(x)-1}\big)$ in a ball $B$ and  $\gamma>0$  or $\mathrm{div}A(x ,u, \nabla u)\le  H_2(x)u^{p(x)-1}+G_2(x)|\nabla u|^{p(x)-1}$ in a ball $B$ and $\gamma<0$. Assume that there exists a positive constant $ \alpha$  such that,
\begin{enumerate}
\item $A(x,u(x),\nabla u(x))\cdot\nabla u(x)\ge \alpha|\nabla u(x)|^{p(x)}-H_0(x) u(x)^{p(x)}$ in $B$.

	\item $\big|A(x,u(x),\nabla u(x))\big|\le H_1(x)u^{p(x)-1}+G_1(x)|\nabla u|^{p(x)-1}$ in $B$
	\end{enumerate}
for certain nonnegative measurable functions $H_i$, $G_j$, $i=0,1,2$, $j=1,2$.

Let $\eta\in C_0^\infty(B)$, $\eta\ge0$. Then, there exists a constant $C$ that depends only on $p_+=\sup_B p$, $p_-=\inf_B p$ and $\alpha$ such that,
\begin{equation}\label{cacciopoli-elliptic}
\begin{aligned}\int u^{\gamma-1}\eta^{p_+}&|\nabla u|^{p_-}\le \int u^{\gamma-1}\eta^{p_+}+C\left[|\gamma|^{-1}\int (H_0+H_2)u^{\gamma+p(x)-1}\eta^{p_+}\right.\\
&\left.+|\gamma|^{-1}\int H_1u^{\gamma+p(x)-1}\eta^{p_+-1}|\nabla \eta|+|\gamma|^{-p_+} \int G_1^{p(x)}u^{\gamma+p(x)-1}\eta^{p_+-p(x)}|\nabla \eta|^{p(x)}\right.\\
&\left.+|\gamma|^{-p_+} \int G_2^{p(x)}u^{\gamma+p(x)-1}\eta^{p_+-p(x)}\right].
 \end{aligned}
\end{equation}	
Here $p_+=p_+^B, p_-=p_-^B$.
\end{lemm}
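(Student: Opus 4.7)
The plan is to adapt the proof of Lemma~\ref{cacciopoli-lemma} to the general structure. The test function is the same: $\varphi = u^\gamma \eta^{p_+}$, which lies in $W_0^{1,p(x)}(B) \cap L^\infty(B)$ because $u\in W^{1,p(x)}(B)$ is bounded and bounded away from $0$ (since $u\ge 1$). Its gradient is
\[
\nabla \varphi = \gamma\, u^{\gamma-1} \eta^{p_+}\, \nabla u + p_+\, u^\gamma \eta^{p_+-1}\, \nabla \eta,
\]
so the weak formulation splits $\int A\cdot\nabla\varphi$ into a ``leading'' term $\gamma\int (A\cdot\nabla u) u^{\gamma-1}\eta^{p_+}$, on which the coercivity hypothesis (1) acts to isolate the good quantity $|\nabla u|^{p(x)}u^{\gamma-1}\eta^{p_+}$, and a ``boundary'' term $p_+\int (A\cdot\nabla\eta) u^\gamma\eta^{p_+-1}$, on which the growth hypothesis (2) acts.

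Consider first the case $\gamma>0$. Testing $\mathrm{div}\,A\ge -H_2 u^{p(x)-1}-G_2|\nabla u|^{p(x)-1}$ against $\varphi\ge 0$, applying (1) on the left and (2) on the boundary term, and rearranging, one obtains
\[
\begin{aligned}
\gamma\alpha\int|\nabla u|^{p(x)}u^{\gamma-1}\eta^{p_+}
&\le \gamma\int H_0\, u^{\gamma+p(x)-1}\eta^{p_+} + p_+\int H_1\, u^{\gamma+p(x)-1}\eta^{p_+-1}|\nabla\eta|\\
&\quad + p_+\int G_1\, u^\gamma |\nabla u|^{p(x)-1}\eta^{p_+-1}|\nabla\eta| + \int H_2\, u^{\gamma+p(x)-1}\eta^{p_+}\\
&\quad + \int G_2\, u^\gamma |\nabla u|^{p(x)-1}\eta^{p_+}.
\end{aligned}
\]
The two terms that contain $|\nabla u|^{p(x)-1}$ are handled by Young's inequality with conjugate exponents $p(x)$ and $p(x)/(p(x)-1)$ and a parameter $\varepsilon=c\min\{1,|\gamma|\}$ chosen as in Lemma~\ref{cacciopoli-lemma}, so that (i) the $\varepsilon$-part of each splitting is at most $(\gamma\alpha/4)\int|\nabla u|^{p(x)}u^{\gamma-1}\eta^{p_+}$ and can be absorbed into the left-hand side, and (ii) the dual constant satisfies $C(\varepsilon)\le C|\gamma|^{-(p_+-1)}$ uniformly in $x$. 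After absorption and division by $\gamma\alpha/2$, one obtains the $|\gamma|^{-1}$ prefactors in front of the $H_i$ terms and the $|\gamma|^{-p_+}$ prefactors in front of the $G_i^{p(x)}$ terms. Passing from $|\nabla u|^{p(x)}$ to $|\nabla u|^{p_-}$ on the left-hand side via $a^{p_-}\le 1 + a^{p(x)}$ produces the initial $\int u^{\gamma-1}\eta^{p_+}$ contribution on the right-hand side of \eqref{cacciopoli-elliptic}. The exponent $\eta^{p_+-p(x)}$ that appears in the statement, rather than $\eta^{p_+}$ that Young's inequality produces directly on the $G_2$ term, is obtained at no cost by using $0\le\eta\le 1$ and $p_+-p(x)\le p_+$.

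The case $\gamma<0$ follows the same outline with the reverse differential inequality $\mathrm{div}\,A\le H_2 u^{p(x)-1}+G_2|\nabla u|^{p(x)-1}$. The test function $\varphi=u^\gamma\eta^{p_+}\ge 0$ is still admissible since $u\ge 1$, and the hypothesis $u\ge 1$ is precisely what keeps under control the negative powers of $u$ arising after differentiation. Multiplication of the coercivity bound by $\gamma<0$ flips its sign, so the leading term becomes $|\gamma|\alpha\int|\nabla u|^{p(x)}u^{\gamma-1}\eta^{p_+}$, and the same Young-absorption argument yields the same estimate \eqref{cacciopoli-elliptic} with $|\gamma|$ in place of $\gamma$ throughout. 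The only substantive bookkeeping obstacle, shared by both cases, is the precise tracking of the $|\gamma|$-dependence through Young's inequality; everything else is routine once the right test function is in place and the Young parameter $\varepsilon$ is tuned to $|\gamma|$.
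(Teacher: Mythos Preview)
Your argument is the paper's argument: test with $u^\gamma\eta^{p_+}$, use coercivity (1) on the $\nabla u$ piece and the growth bound (2) on the $\nabla\eta$ piece, then absorb the $G_1,G_2$ contributions by Young's inequality with parameter $\varepsilon\sim\min\{1,|\gamma|\}$, and finish with $|\nabla u|^{p_-}\le 1+|\nabla u|^{p(x)}$. Your remark that the exponent $\eta^{p_+-p(x)}$ on the $G_2$ term (rather than the $\eta^{p_+}$ that Young actually delivers) tacitly uses $0\le\eta\le1$ is correct and worth keeping.

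One small bookkeeping slip, which the paper's statement shares: the $H_0$ term enters the inequality with a factor $\gamma$ (it comes from multiplying the coercivity bound by $\gamma u^{\gamma-1}\eta^{p_+}$), so after division by $\alpha\gamma/2$ its coefficient is $2/\alpha$, i.e.\ $O(1)$, not $|\gamma|^{-1}$ as you assert and as \eqref{cacciopoli-elliptic} records. This is harmless for the subsequent iteration lemmas, where only uniform boundedness of the constants for $|\gamma|\ge\gamma_0>0$ is used, but your sentence ``one obtains the $|\gamma|^{-1}$ prefactors in front of the $H_i$ terms'' is not literally true for $i=0$.
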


\begin{proof} Let us consider the case of $\gamma>0$. As in the proof of Lemma \ref{cacciopoli} we take $u^\gamma\eta^{p_+}$ as test function. Then,
\[\begin{aligned}
\alpha\gamma\int u^{\gamma-1}\eta^{p_+}|\nabla u|^{p(x)}&\le
-p_+\int H_1u^{\gamma+p(x)-1}\eta^{p_+-1}|\nabla \eta|-p_+\int G_1u^{\gamma}\eta^{p_+-1}|\nabla u|^{p(x)-1}|\nabla \eta|\\
&+\int H_2u^{\gamma+p(x)-1}\eta^{p_+}+\int G_2u^{\gamma}\eta^{p_+}|\nabla u|^{p(x)-1}
+\int H_0 u^{\gamma+p-1}\eta^{p_+}
\end{aligned}\]

As in the proof of Lemma \ref{cacciopoli},
\[\begin{aligned}
\int G_1u^{\gamma}\eta^{p_+-1}|\nabla u|^{p(x)-1}|\nabla \eta|&\le
\frac{\alpha\gamma}{4p_+}\int u^{\gamma-1}\eta^{p_+}|\nabla u|^{p(x)}\\&+C\gamma^{-p_++1}\int G_1^{p(x)}u^{\gamma+p(x)-1}\eta^{p_+-p(x)}|\nabla \eta|^{p(x)}
\end{aligned}\]

Similarly,
\[\begin{aligned}
\int G_2u^{\gamma}\eta^{p_+}|\nabla u|^{p(x)-1}&\le
\frac{\alpha\gamma}4\int u^{\gamma-1}\eta^{p_+}|\nabla u|^{p(x)}+C\gamma^{-p_++1}\int G_2^{p(x)}u^{\gamma+p(x)-1}\eta^{p_+-p(x)}.
\end{aligned}\]

Hence, since
\[\int u^{\gamma-1}\eta^{p_+}|\nabla u|^{p_-}\le \int u^{\gamma-1}\eta^{p_+}+\int u^{\gamma-1}\eta^{p_+}|\nabla u|^{p(x)}
\]
we have \eqref{cacciopoli-elliptic}.

The case of $\gamma<0$ is done in a similar way.
\end{proof}

	\medskip

Once we have a Cacciopoli type estimate we can get results  similar to Lemmas \ref{subhar} and \ref{superhar}.
%


So, we have
\begin{lemm}\label{subhar-elliptic} Let $p$ log-H\"older continuous in $B_4$. Let $u\ge 1$ bounded  be such that $\mathrm{div}A(x ,u, \nabla u)\ge - \big(H_2(x)u^{p(x)-1}+G_2(x)|\nabla u|^{p(x)-1}\big)$ in $B_4$. Assume that there exists a positive constant $ \alpha$  such that,
\begin{enumerate}
\item $A(x,u(x),\nabla u(x))\cdot\nabla u(x)\ge \alpha|\nabla u(x)|^{p(x)}-H_0(x) u(x)^{p(x)}$ in $B_4$.

	\item $\big|A(x,u(x),\nabla u(x))\big|\le H_1(x)u^{p(x)-1}+G_1(x)|\nabla u|^{p(x)-1}$ in $B_4$.
	\end{enumerate}
	Here  $H_i\in L^{q_i}(B_4)$ $i=0,1,2$,  $G_2^{p(x)}\in L^{t_2}(B_4)$ with $\max\{1,\frac N{p_-^{4R}}\}<q_i,t_2\le\infty$ for $i=0,2$,  $\max\{1,\frac N{p_-^{4R}-1}\}<q_1\le\infty$, $G_1\in L^\infty(B_4)$ and they are nonnegative.
Then, for every $0<\sigma<\rho\le4$  and $t>0$ there holds that,
\begin{equation}\label{eq-subhar-elliptic-alt}
\sup_{B_{\rho_1}}u\le C\Big(\frac{\rho_2}{\rho_2-\rho_1}\Big)^C
\Big(\fint_{B_{\rho_2}}u^t\Big)^{1/t}
\end{equation}

The constant $C$ depends only on $s,p_+^4,p_-^4,q_i,t_2,t,\alpha,\|H_i\|_{L^{q_i}(B_4)}$, $\|G_1^{p(x)}\|_{L^{\infty}(B_4)}$,
 $\|G_2^{p(x)}\|_{L^{t_2}(B_4)}$, $\|u\|_{L^{sq'}(B_4)}^{p_+^4-p_-^4}$, $\|u\|_{L^{ss_2}(B_4)}^{p_+^4-p_-^4}$ and $\|u\|_{L^{sr_i}(B_4)}^{p_+^4-p_-^4}$ for certain $q'=\frac q{q-1}$, $r_0\in(1,\infty)$ with
 $\frac1{q_i}+\frac1q+\frac1{r_i}=1$ $i=0,1,2$, $\frac1{t_2}+\frac1q+\frac1{s_2}=1$. Here $s\ge p_+^4-p_-^4$ is arbitrary.
\end{lemm}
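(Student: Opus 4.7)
The strategy is to mimic the Moser iteration proof of Lemma \ref{subhar}, starting from the Caccioppoli estimate \eqref{cacciopoli-elliptic} instead of \eqref{cacciopoli}. First I would fix a ball $B_\rho \subset B_4$, a cut-off $\eta \in C_0^\infty(B_\rho)$ with $\eta \equiv 1$ on $B_\sigma$ and $|\nabla\eta| \leq C/(\rho-\sigma)$, and $\gamma \geq \gamma_0 > 0$. Then I would apply \eqref{cacciopoli-elliptic} and insert it into Sobolev's inequality applied to $u^{(\gamma-1+p_-)/p_-}\eta^{p_+/p_-}$, exactly as in the first display of the proof of Lemma \ref{subhar}, with $\kappa = \hat N/(\hat N - p_-^4)$ chosen as there (the case $N \leq p_-^4$ handled by Sobolev embedding $W_0^{1,p_-} \hookrightarrow L^t$ for any $t < \infty$).

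The main work is estimating the five terms on the right side of \eqref{cacciopoli-elliptic}. The terms involving $u^{\gamma-1}\eta^{p_+}$ and $G_1^{p(x)} u^{\gamma+p(x)-1} \eta^{p_+ - p(x)}|\nabla\eta|^{p(x)}$ and $G_2^{p(x)} u^{\gamma+p(x)-1}\eta^{p_+-p(x)}$ (noting we absorb $(\rho-\sigma)^{-p_+}$ factors) are handled by Hölder's inequality with exponents $(q, q_0, r_0)$ or $(q, t_2, s_2)$ summing to $1$, exactly as the $H(x)$ term was handled in Lemma \ref{subhar}. This brings in the factors $\|H_0\|_{L^{q_0}}$, $\|G_1^{p(x)}\|_{L^\infty}$, $\|G_2^{p(x)}\|_{L^{t_2}}$ and the quantities $M_1^{p_+-p_-}, M_2^{p_+-p_-}, M_3^{p_+-p_-}$ involving $\|u\|_{L^{sq'}}, \|u\|_{L^{sr_0}}, \|u\|_{L^{ss_2}}$. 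For the $H_1 u^{\gamma+p(x)-1}\eta^{p_+-1}|\nabla\eta|$ term, I would use the bound $|\nabla\eta| \leq C(\rho-\sigma)^{-1}$ together with $\rho^{-1} \leq C \rho^{-p_+/p_-}$ (by log-Hölder) to absorb it into the same type of estimate, using Hölder with exponent $q_1$ (whose lower bound $\frac{N}{p_-^4 - 1}$ guarantees the existence of an appropriate $r_1$). The $H_2$ term is analogous to $H_0$.

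After collecting these bounds, I would arrive at the same iterative inequality \eqref{beta} (with $M = M_1 + M_2 + M_3$ now accounting for the three Hölder partners), hence at \eqref{ineq-iterative} with the constant $C_0$ enlarged to depend on the new structural data. The Moser iteration $\bar\beta_j = \bar\kappa^j t$ and the telescoping of $r_j = \rho_1 + 2^{-j}(\rho_2 - \rho_1)$ then proceed verbatim as in Lemma \ref{subhar}, yielding \eqref{eq-subhar-elliptic-alt} for $t > q(p_+^4 - 1)$. The extension to arbitrary $t > 0$ is obtained by invoking Lemma \ref{HKM}, as in Remark \ref{rem-t0}.

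The main obstacle I anticipate is the bookkeeping for the term $H_1 u^{\gamma+p(x)-1}\eta^{p_+-1}|\nabla\eta|$, which carries only one factor of $|\nabla\eta|$ rather than $|\nabla\eta|^{p(x)}$. One must check that $\rho^{-1}$ is compatible with the scaling $\rho^{-p_+}$ that appears elsewhere in the iteration (which is where the log-Hölder hypothesis $r^{-(p_+^r - p_-^r)} \leq K$ enters) and that the required exponent $r_1 \in (1,\infty)$ with $1/q + 1/q_1 + 1/r_1 = 1$ exists, which is exactly why the hypothesis $q_1 > N/(p_-^4 - 1)$ is imposed (rather than the weaker $q_1 > N/p_-^4$ imposed on $q_0,q_2,t_2$). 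Once this is verified, every other estimate is a direct transcription of the argument in Lemma \ref{subhar}.
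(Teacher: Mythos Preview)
Your approach is essentially the same as the paper's: start from the Caccioppoli estimate \eqref{cacciopoli-elliptic}, feed it into Sobolev with the same choice of $\kappa$, estimate each term by H\"older with triples $(q,q_i,r_i)$ or $(q,t_2,s_2)$, arrive at the iterative inequality \eqref{beta}/\eqref{ineq-iterative}, and run Moser iteration followed by Lemma~\ref{HKM}. The paper introduces $M_1,\dots,M_5$ (one for $q'$, three for the $r_i$, one for $s_2$) rather than three quantities, but this is only bookkeeping.

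One point to correct in your treatment of the $H_1$ term: the inequality ``$\rho^{-1}\le C\rho^{-p_+/p_-}$ by log-H\"older'' is not what is used, and the hypothesis $q_1>\frac N{p_-^4-1}$ is \emph{not} there to guarantee the existence of $r_1$ (that follows from $\frac1q+\frac1{q_1}<1$, which is built into the choice of $q$). The actual argument is: the single factor $|\nabla\eta|$ contributes $(\rho-\sigma)^{-1}$, while passing from $\big(\fint_{B_\rho}H_1^{q_1}\big)^{1/q_1}$ to $\|H_1\|_{L^{q_1}(B_4)}$ contributes $\rho^{-N/q_1}\le(\rho-\sigma)^{-N/q_1}$; altogether $(\rho-\sigma)^{-(1+N/q_1)}$. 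The hypothesis $q_1>\frac N{p_-^4-1}$ says precisely that $1+\frac N{q_1}<p_-^4\le p_+$, so this is dominated by $(\rho-\sigma)^{-p_+}$, matching the other terms. With this fix the rest of your outline goes through verbatim.
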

\begin{proof}
We proceed as in the proof of Lemma \ref{subhar}. If $p_-^4\ge N$ we choose $\hat N=N$. If $p_-^4<N$ we choose $\hat N$ such that $p_-^4<\hat N<q_ip_-^4$ for $i=0,1,2$ and also $p_-^4<\hat N<t_2p_-^4$. Then, we choose $1<q<\frac{\hat N}{\hat N-p_-^4}$ such that $\frac1{q_i}+\frac1 q<1$ for $i=0,1,2$ and $\frac1{t_2}+\frac1 q<1$. Finally, we take $r_i\in (1,\infty)$ such that $\frac1{q_i}+\frac 1q+\frac1{r_i}=1$ and $s_2\in(1,\infty)$ such that $\frac1{t_2}+\frac 1q+\frac1{s_2}=1$.

We will be calling $M_{i+2}=\Big(\fint_{B_4} u^{sr_i}\Big)^{1/sr_i}$, $i=0,1,2$,
$M_1=\Big(\fint_{B_4} u^{sq'}\Big)^{1/sq'}$, $M_5=\Big(\fint_{B_4} u^{ss_2}\Big)^{1/ss_2}$, $M=\sum_{j=1}^5 M_j$.

The terms involving $H_0, H_2$ are treated exactly as the term with $H$ in Lemma \ref{subhar}. The term involving $H_1$ is treated similarly. We have,
\[\begin{aligned}
\fint H_1(x) u^{\gamma+p(x)-1}\eta^{p_+-1}|\nabla\eta|&\le \frac C{\rho-\sigma}\Big(\fint_{B\rho}H_1^{q_1}\Big)^{1/q_1}\Big(\fint_{B_\rho}u^{q(\gamma+p_--1)}\Big)^{1/q}\Big(\fint u^{r_1(p^+-p_-)}\Big)^{1/r_1}\\
&\le \frac C{(\rho-\sigma)^{1+\frac N{q_1}}}\|H_1\|_{B_4}M_3^{p_+-p_-}\Big(\fint_{B_\rho}u^{q(\gamma+p_--1)}\Big)^{1/q}\\
&\le \frac C{(\rho-\sigma)^{p_+}}\|H_1\|_{B_4}M_3^{p_+-p_-}\Big(\fint_{B_\rho}u^{q(\gamma+p_--1)}\Big)^{1/q}
\end{aligned}\]
since $1+\frac N{q_1}<p_-^4\le p_-\le p_+$.

And,
\[
\begin{aligned}
\fint G_2^{p(x)}u^{\gamma+p(x)-1}\eta^{p_+}&\le \rho^{-\frac N{t_2}}\|G_2^{p(x)}\|_{L^{t_2}(B_4)}\Big(\fint_{B_\rho}u^{q(\gamma+p_--1)}\Big)^{1/q}
\Big(\fint_{B_\rho}u^{s_2(p_+-p_-)}\Big)^{1/s_2}\\
&\le \frac C{(\rho-\sigma)^{-p_+}}M_5^{p_+-p_-}\|G_2^{p(x)}\|_{L^{t_2}(B_4)}
\end{aligned}
\]
since $\frac N{t_2}<p_-^4\le p_-\le p_+$,, $0<\rho-\sigma<\rho<4$.

Let us now look at the term involving $G_1$ which is bounded by
\[
\frac C{(\rho-\sigma)^{p_+}}\| G_1^{p(x)}\|_{L^\infty(B_4)}M_1^{p_+-p_-}\Big(
\fint u^{q(\gamma+p_--1)}\Big)^{1/q}.
\]

Now, the proof follows with no change.
\end{proof}

Also,
\begin{lemm}[Weak Harnack] \label{superhar-elliptic} Let $p$ log-H\"older continuous in $B_4$. There exist $t_0>0$  such that, for $s\ge p_+^4-p_-^4$ there exists $C$ such that, if $u\ge 1$  and bounded is such that $\mathrm{div}A(x ,u, \nabla u)\le  H_2(x)u^{p(x)-1}+G_2(x)|\nabla u|^{p(x)-1}$ in $B_4$ and there exists a positive constant $ \alpha$  such that,
\begin{enumerate}
\item $A(x,u(x),\nabla u(x))\cdot\nabla u(x)\ge \alpha|\nabla u(x)|^{p(x)}-H_0(x) u(x)^{p(x)}$ in $B_4$

	\item $\big|A(x,u(x),\nabla u(x))\big|\le H_1(x)u^{p(x)-1}+G_1(x)|\nabla u|^{p(x)-1}$ in $B_4$
	\end{enumerate}
	with  $H_i\in L^{q_i}(B_4)$, $G_2^{p(x)}\in L^{t_2}(B)$ for some $\max\{1,\frac N{p_-^{4R}}\}<q_i,t_2\le\infty$ $i=0,2$, $\max\{1,\frac N{p^4_--1}\}<q_1\le\infty$ and $G_1\in L^{\infty}(B)$  and they are nonnegative,
	there holds that
\begin{equation}\label{weakhar-elliptic-t0}\inf_{B_{1}}u\ge C\Big(\fint_{B_{2}}u^{t_0}\Big)^{1/t_0}.
\end{equation}

The constant $C$ depends only on $s,p_+^4,p_-^4,q_i,t_2,t,\alpha,\|H_i\|_{L^{q_i}(B_4)}, $i=0,1,2$,
\|G_2^{p(x)}\|_{L^{t_2}(B_4)}, \linebreak
 \|G_1^{p(x)}\|_{L^{\infty}(B_4)}$, $\big(\fint_{B_4}u^{sq'}\big)^{\frac{p_+^4-p_-^4}{sq'}}$, $\big(\fint_{B_4}u^{sr_i}\big)^{\frac{p_+^4-p_-^4}{sr_i}}$ $i=0,1,2$ and $\big(\fint_{B_4}u^{ss_2}\big)^{\frac{p_+^4-p_-^4}{ss_2}}$ for certain $q'=\frac q{q-1}$, $r_i\in(1,\infty)$ such that $\frac1{q_i}+\frac1q+\frac1{r_i}=1$, $s_2\in(1,\infty)$ such that $\frac1{t_2}+\frac1q+\frac1{s_2}=1$. Here $s\ge p_+^4-p_-^4$ is arbitrary.

\end{lemm}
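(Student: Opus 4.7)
The strategy is to mimic the proof of Lemma \ref{superhar}, replacing every use of the plain Caccioppoli inequality by Lemma \ref{lemma-cacciopoli-elliptic} and controlling the extra terms involving $H_0,H_1,G_1,G_2$ exactly as in the proof of Lemma \ref{subhar-elliptic}. Choose $\hat N$, $q$, $r_i$, $s_2$ exactly as in Lemma \ref{subhar-elliptic} so that $\frac1{q_i}+\frac1q+\frac1{r_i}=1$ and $\frac1{t_2}+\frac1q+\frac1{s_2}=1$, and set $\kappa=\frac{\hat N}{\hat N-p_-^4}$, $\bar\kappa=\kappa/q>1$. Apply \eqref{cacciopoli-elliptic} with $\gamma=\beta-(p_--1)<0$ (so $|\gamma|\ge p_-^4-1=:\gamma_0$) and a cut-off $\eta\in C_0^\infty(B_\rho)$ that equals $1$ on $B_\sigma$ with $|\nabla\eta|\le C/(\rho-\sigma)$. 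Bounding each of the five terms by H\"older against $L^{q_i}$, $L^{t_2}$, $L^\infty$ as in Lemma \ref{subhar-elliptic} and using Sobolev exactly as in Lemma \ref{subhar}, the sign of $\beta$ reverses the direction of the Sobolev step and yields the reverse iterative inequality
\[
\phi(u,q\beta,B_\rho)\le C^{1/|\beta|}(1+|\beta|)^{p_+/|\beta|}\Big(\frac{\rho}{\rho-\sigma}\Big)^{p_+/|\beta|}\phi(u,\kappa\beta,B_\sigma)
\]
for every $\beta<0$, where $C$ depends on the listed data.

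Next, the reverse bound $\phi(u,t_0,B_2)\le \bar C\,\phi(u,-t_0,B_2)$ needs to be reproved in the present setting. The plan is to test with $\gamma=1-p_-^{2r}$ and $\eta\in C_0^\infty(B_{2r})$ with $\eta\equiv1$ on $B_r$, obtaining from \eqref{cacciopoli-elliptic} a bound for $\fint_{B_r}|\nabla\log u|^{p_-^{2r}}$. The new ingredients compared to Lemma \ref{superhar} are the terms containing $H_0$, $H_1$, $G_1$, $G_2$; each is handled by the same H\"older splitting as in Lemma \ref{subhar-elliptic}, with a factor $r^{-p_+^{2r}}$ absorbed using log-H\"older continuity of $p$ together with $q_i,t_2>\max\{1,N/p_-^4\}$ (for $H_0,H_2,G_2$) and $q_1>\max\{1,N/(p_-^4-1)\}$ (so that $1+N/q_1<p_-^4$, which is precisely what is needed for the $H_1|\nabla\eta|$ term). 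Summing these contributions gives
\[
\fint_{B_r}|\nabla\log u|^{p_-^{2r}}\le C\,r^{-p_+^{2r}} M^{p_+^4-p_-^4},
\]
with $C$ depending only on the data in the statement. Poincar\'e and John–Nirenberg then produce constants $C_1,C_2$ such that $\fint_{B_2}e^{C_1|f-f_{B_2}|}\le C_2$ for $f=\log u$, whence the reverse inequality with $t_0=C_1$.

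Finally, set $\bar\beta_j=-\bar\kappa^j t_0$, $r_j=1+2^{-j}$ and iterate the displayed estimate exactly as at the end of the proof of Lemma \ref{superhar}; passing to the limit $j\to\infty$ yields $\phi(u,-\bar\kappa^j t_0,B_{r_j})\to \inf_{B_1}u$, giving \eqref{weakhar-elliptic-t0}.

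The only real obstacle is the logarithmic estimate used to cross from positive to negative exponents: one must check that the four structural terms involving $H_0,H_1,G_1,G_2$ can each be absorbed into a bound depending only on $r^{-p_+^{2r}}M^{p_+^4-p_-^4}$ times the listed norms. The critical quantitative points are (i) the hypothesis $q_1>N/(p_-^4-1)$, which ensures $1+N/q_1<p_-^4\le p_-^{2r}\le p_+^{2r}$ so that $r^{-(1+N/q_1)}\le C r^{-p_+^{2r}}$, and (ii) the hypotheses $q_0,q_2,t_2>N/p_-^4$, which yield the analogous bound for the remaining terms; everything else is routine bookkeeping along the lines of Lemma \ref{superhar}.
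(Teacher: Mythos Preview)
Your proposal is correct and follows essentially the same approach as the paper: both reduce to the template of Lemma~\ref{superhar}, replace the Caccioppoli step by Lemma~\ref{lemma-cacciopoli-elliptic}, and control the five structural terms ($H_0,H_1,H_2,G_1,G_2$) via the H\"older splittings of Lemma~\ref{subhar-elliptic}, using precisely the exponent conditions $q_0,q_2,t_2>N/p_-^4$ and $q_1>N/(p_-^4-1)$ to absorb the powers of $r$ into $r^{-p_+^{2r}}$ in the logarithmic estimate. The paper carries out the five term-by-term bounds explicitly, but your sketch identifies the same critical inequalities and the same iteration endgame.
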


\begin{proof} We proceed as in the proof of Lemma \ref{superhar} by using \eqref{cacciopoli-elliptic} and the ideas in Lemma \ref{subhar-elliptic}. Recall that in this process we have $\gamma\le -(p_-^4-1)$.

In this way we get \eqref{beta2}. As in Lemma \ref{superhar}, in order to finish the proof we need to find $t_0>0$ such that \eqref{reverse} holds for $u$.
So we bound, by using \eqref{cacciopoli-elliptic}, for an arbitrary $0<r\le 2$, 
$\eta\in C_0^\infty(B_{2r})$ with $0\le \eta\le 1$, $\eta\equiv1$ in $B_r$, $|\nabla\eta|\le \frac C r$  and $\gamma=1-p_-^{2r}$,
\[\begin{aligned}
\fint_{B_r} |\nabla \log u|^{p_-^{2r}}&=\fint_{B_r}u^{-p_-^{2r}}|\nabla u|^{p_-^{2r}}\le
C\fint_{B_{2r}}u^{-p_-^{2r}}\eta^{p_+^{2r}}|\nabla u|^{p_-^{2r}}
\le C\fint_{B_{2r}}u^{-p_-^{2r}}\eta^{p_+^{2r}}\\&+
\frac C{(p_-^{2r}-1)}\fint (H_0+H_2)u^{p(x)-p_-^{2r}}\eta^{p_+^{2r}}
+\frac C{(p_-^{2r}-1)}\fint H_1 u^{p(x)-p_-^{2r}}\eta^{p_+^{2r}-1}|\nabla\eta|\\
&+\frac C{(p_-^{2r}-1)^{p_+^{2r}}}\fint G_1^{p(x)}u^{p(x)-p_-^{2r}}\eta^{p_+^{2r}-p(x)}|\nabla\eta|^{p(x)}
+\frac C{(p_-^{2r}-1)^{p_+^{2r}}}\fint G_2^{p(x)}u^{p(x)-p_-^{2r}}\eta^{p_+^{2r}}.
\end{aligned}
\]

So that,
\[\begin{aligned}
\fint_{B_r} &|\nabla \log u|^{p_-^{2r}}\le C\Big[1+\|H_0\|_{L^{q_0}(B_4)}r^{-N/q_0}\Big(\fint_{B_r}u^{q_0'(p_+-p_-)}\Big)^{1/q_0'}\\
&+\|H_2\|_{L^{q_2}(B_4)}r^{-N/q_2}\Big(\fint_{B_r}u^{q_2'(p_+-p_-)}\Big)^{1/q_2'}
+\|H_1\|_{L^{q_1}(B_4)}r^{-(1+\frac N{q_1})}\Big(\fint_{B_r}u^{q_1'(p_+-p_-)}\Big)^{1/q_1'}\\
&+\|G_1^{p(x)}\|_{L^\infty(B_4)}r^{-{p_+^{2r}}}\Big(\fint_{B_r}u^{q'(p_+-p_-)}\Big)^{1/q'}
+\|G_2^{p(x)}\|_{L^{t_2}(B_4)}r^{-N/t_2}\Big(\fint_{B_r}u^{t_2'(p_+-p_-)}\Big)^{1/t_2'}\Big].
\end{aligned}\]

Now, since $q_i'<r_i$, $t_2'< s_2$,
\[\begin{aligned}
\fint_{B_r} |\nabla \log u|^{p_-^{2r}}&\le C\Big[1+\|H_0\|_{L^{q_0}(B_4)}r^{-N/q_0}M_2^{p_+-p_-}
+\|H_2\|_{L^{q_2}(B_4)}r^{-N/q_2}M_4^{p_+-p_-}\\
&+\|H_1\|_{L^{q_1}(B_4)}r^{-(1+\frac N{q_1})}M_3^{p_+-p_-}
+\|G_1^{p(x)}\|_{L^\infty(B_4)}r^{-{p_+^{2r}}}M_1^{p_+-p_-}\\
&+\|G_2^{p(x)}\|_{L^{t_2}(B_4)}r^{-N/t_2}M_5^{p_+-p_-}\Big].
\end{aligned}\]

Finally, since $0<r\le 2$, $\frac N{q_i}<p_-^4$, $i=0,2$,  $\frac N{t_2}<p_-^4$, $1+\frac N{q_1}\le p_-^4$ and $p_-^4\le p_-^{2r}\le p_+^{2r}$,
\[\begin{aligned}
\fint_{B_r} |\nabla \log u|^{p_-^{2r}}&\le C\Big[1+\sum_{i=0}^3\|H_i\|_{L^{q_i}(B_4)}+\|G_1^{p(x)}\|_{L^\infty(B_4)} +\|G_2^{p(x)}\|_{L^{t_2}(B_4)} \Big]r^{-p_+^{2r}}M^{p_+^{4}-p_-^{4}}.
\end{aligned}\]

Now the proof follows in a standard way as in Lemma \ref{superhar}
\end{proof}

\begin{rema}[Improved weak Harnack] With the same proof as that of Lemma \ref{superhar-improved} we can improve on Lemma \ref{superhar-elliptic}. In fact, \eqref{weakhar-elliptic-t0} holds for any $t_0>0$ if $p_-^4\ge N$ and for any $0<t_0<\frac N{N-p_-^4}(p_-^4-1)$ if $N>p_-^4$.
\end{rema}

\begin{rema}[Local bounds] \label{local bounds-elliptic} As in the previous section,  by modifying the proof of Lemmas \ref{lemma-cacciopoli-elliptic} and \ref{subhar-elliptic}, we get that if $u$ satisfies weakly
 \[
 \big|\mbox{div}A(x,u,\nabla u)\big|\le H_2(x)(|u|+1)^{p(x)-1}+G_2(x)|\nabla u|^{p(x)-1}\quad\mbox{in}\quad\Omega
 \]
 and
 \begin{enumerate}
\item $A(x,u(x),\nabla u(x))\cdot\nabla u(x)\ge \alpha|\nabla u(x)|^{p(x)}-H_0(x) (|u(x)|+1)^{p(x)}$ in $\Omega$

	\item $\big|A(x,u(x),\nabla u(x))\big|\le H_1(x)(|u|+1)^{p(x)-1}+G_1(x)|\nabla u|^{p(x)-1}$ in $\Omega$
	\end{enumerate}
with $0\le H_i\in L^{q_i(x)}(\Omega)$, $0\le G_1\in L^\infty(\Omega)$, $0\le G_2^{p(x)}\in L^{t_2(x)}(\Omega)$ with $q_i,t_2\in C(\Omega)$ and $\max\{1,\frac N{p(x)}\}<q_2(x),t_2(x)$ in $\Omega$, $\max\{1,\frac N{p(x)-1}\}<q_0(x),q_1(x)$ in $\Omega$, there holds that $u$ is locally bounded.

\smallskip

Then, as in the proof of Corollary \ref{coro-holder}  we get that, if the structure conditions (1), (2), (3) do not depend on $M_0$,  weak solutions to \eqref{elliptic} are locally bounded. In fact, we let $u$ be a weak solution to \eqref{elliptic} and
%
\[\begin{aligned}
&H_i(x)=g_i(x)+C_i(x),\quad i=0,1\\
&H_2(x)=f(x)+C_2(x)\\
&G_j(x)=K_j(x), \quad j=1,2
\end{aligned}
\]

Then,
\[
\big|\mbox{div} A(x, u,\nabla u)\big|=\big| B(x, u,\nabla  u)\big|\le H_2(x)(|u(x)|+1)^{p(x)-1}+G_2(x)|\nabla u(x)|^{p(x)-1}\]
and
 \begin{enumerate}
\item $ A(x, u(x),\nabla  u(x))\cdot\nabla  u(x)\ge \alpha|\nabla  u(x)|^{p(x)}-H_0(x)  (|u(x)|+1)^{p(x)}$ in $\Omega$

	\item $\big| A(x, u(x),\nabla  u(x))\big|\le H_1(x)(|u|+1)^{p(x)-1}+G_1(x)|\nabla  u|^{p(x)-1}$ in $\Omega$.
	\end{enumerate}
So, we get that $ u$ is locally bounded.
\end{rema}

\bigskip

We can now prove Harnack inequality for solutions of general elliptic equations with non-standard growth.

\begin{theo}\label{harnack-elliptic}
Let $\Omega\subset\R^N$ be bounded and let $p$ log-H\"older continuous in $\Omega$.
Let $A(x,s,\xi)$, $ B(x,s,\xi)$ satisfy the structure conditions (1), (2) and (3) for certain nonnegative functions $g_0,C_0\in L^{q_0}(\Omega)$, $g_1,C_1\in L^{q_1}(\Omega)$, $f,C_2\in L^{q_2}(\Omega)$, $K_1\in L^\infty(\Omega)$, $K_2^{p(x)}\in L^{t_2}(\Omega)$ with $\max\{1,\frac N{p_1-1}\}<q_0,q_1\le\infty$, $\max\{1,\frac N{p_1}\}<q_2,t_2\le\infty$.

 Let $\Omega'\subset\subset\Omega$. There exists $R\le \min\{1,\frac14\mathrm{dist}(\Omega',\partial\Omega)\}$  such that, if $u\ge0$ is a  bounded weak solution to \eqref{elliptic} in $\Omega$,  there exists and $C>0$ such  that, for every $x_0\in\Omega'$,
\begin{equation}\label{eq-harnack-elliptic-alt}
\sup_{B_{R}(x_0)}u\le C\big[ \inf_{B_{R}(x_0)}u+ {R+\mu R}\big].
\end{equation}
Here
\[
\begin{aligned}
\mu&=\Big[R^{1-\frac N{q_2}}\|f\|_{L^{q_2}(B_{4R})}\Big]^{\frac1{p_-^{4R}-1}}+\Big[R^{-\frac N{q_0}}\|g_0\|_{L^{q_0}(B_{4R})}\Big]^{\frac1{p_-^{4R}-1}}
+\Big[R^{-\frac N{q_1}}\|g_1\|_{L^{q_1}(B_{4R})}\Big]^{\frac1{p_-^{4R}-1}}
\end{aligned}
\]

The constant $C$ depends only on $s,p_+^{4R},p_-^{4R},q_i,t_2,\alpha$, $\mu^{p_+^{4R}-p_-^{4R}}$, $\|C_i\|_{L^{q_i}(B_{4R}(x_0))}$, $i=0,1,2$,
$\|K_2^{p(x)}\|_{L^{t_2}(B_{4R}(x_0))}$,
$ \|K_1^{p(x)}\|_{L^{\infty}(B_{4R}(x_0))}$, $\|u\|_{L^{sq'}(B_{4R}(x_0))}^{p_+^{4R}-p_-^{4R}}$, $\|u\|_{L^{sr_i}(B_{4R}(x_0))}^{p_+^{4R}-p_-^{4R}}$ $i=0,1,2$, $\|u\|_{L^{ss_2}(B_{4R}(x_0))}^{p_+^{4R}-p_-^{4R}}$ for certain $q'=\frac q{q-1}$, $r_i\in(1,\infty)$ such that $\frac1{q_i}+\frac1q+\frac1{r_i}=1$, $s_2\in(1,\infty)$ such that $\frac1{t_2}+\frac1q+\frac1{s_2}=1$. Here $s\ge p_+^4-p_-^4$ is arbitrary.

Observe that $\mu^{p_+^{4R}-p_-^{4R}}$ is bounded independently of $R$.
\end{theo}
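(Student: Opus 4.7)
The plan is to reduce this statement, via an affine rescaling, to the setting of Lemmas \ref{subhar-elliptic} and \ref{superhar-elliptic}, exactly as in the proof of Theorem \ref{har}. Without loss of generality take $x_0=0$. The rescaling has two jobs: first, to map $B_{4R}$ onto $B_4$ so the lemmas apply as stated; second, to produce a new function that is bounded below by $1$ and for which the inhomogeneous terms $f$, $g_0$, $g_1$ are absorbable into coefficients multiplying powers of the new unknown.

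Concretely, I would set $\bar u(x)=R^{-1}u(Rx)+1+\mu$, $\bar p(x)=p(Rx)$, and define $\bar A(x,s,\xi)=A(Rx,R(s-1-\mu),\xi)$ and $\bar B(x,s,\xi)=R\,B(Rx,R(s-1-\mu),\xi)$. The chain rule then gives $\mathrm{div}\bar A(x,\bar u,\nabla\bar u)=\bar B(x,\bar u,\nabla\bar u)$ weakly in $B_4$. Using $u(Rx)\le R\bar u(x)$ and $\bar u\ge 1+\mu\ge 1$, the structure conditions (1)--(3) transfer to the hypotheses of Lemmas \ref{subhar-elliptic} and \ref{superhar-elliptic} with
\[
\bar H_0(x)= C_0(Rx)R^{\bar p(x)} + \frac{g_0(Rx)}{(1+\mu)^{\bar p(x)}},\quad \bar G_1(x)=K_1(Rx),\quad \bar G_2(x)=RK_2(Rx),
\]
and analogous expressions for $\bar H_1$ (built from $g_1, C_1$) and $\bar H_2$ (built from $f, C_2$), each $f, g_0, g_1$ contribution being divided by a power of $(1+\mu)$ to absorb it into the corresponding power of $\bar u$.

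The crux is then to verify that all these coefficients have $L^{q_i}$ (resp.\ $L^\infty$, resp.\ $L^{t_2}$) norms on $B_4$ bounded by constants appearing in the theorem and independent of $R$. Three ingredients will do this: the scaling identity $\|h(R\cdot)\|_{L^q(B_4)}=R^{-N/q}\|h\|_{L^q(B_{4R})}$; the bounds $R\le 1$, $R^{\bar p(x)}\le R^{p_-^{4R}}$, $R^{\bar p(x)-1}\le R^{p_-^{4R}-1}$ combined with the hypotheses $q_0,q_1>N/(p_1-1)\ge N/(p_-^{4R}-1)$ and $q_2,t_2>N/p_1\ge N/p_-^{4R}$, which render the surviving powers of $R$ non-negative in every $C_i$ and $K_i$ contribution; and the definition of $\mu$, whose three summands are engineered so that $(1+\mu)^{p_-^{4R}-1}$ dominates each of $R^{1-N/q_2}\|f\|_{L^{q_2}}$, $R^{-N/q_0}\|g_0\|_{L^{q_0}}$, $R^{-N/q_1}\|g_1\|_{L^{q_1}}$, thereby taming the $f,g_0,g_1$ contributions after division.

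Once this is in place I would apply Lemma \ref{subhar-elliptic} to $\bar u$ with $\rho_1=1$, $\rho_2=2$, $t=t_0$ (the exponent from Lemma \ref{superhar-elliptic}) and then Lemma \ref{superhar-elliptic} itself, concluding $\sup_{B_1}\bar u\le C\inf_{B_1}\bar u$. Undoing $\bar u(x)=R^{-1}u(Rx)+1+\mu$ and multiplying through by $R$ yields $\sup_{B_R}u+R+R\mu\le C[\inf_{B_R}u+R+R\mu]$, which is \eqref{eq-harnack-elliptic-alt} after absorbing the $R+R\mu$ on the left. The $\|u\|_{L^{s\cdot}(B_{4R})}^{p_+^{4R}-p_-^{4R}}$ dependence of $C$ comes from the same scaling computation, using that $R^{-(p_+^{4R}-p_-^{4R})}$ is bounded by the log-H\"older hypothesis. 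The main obstacle will be the bookkeeping in the third paragraph: verifying, coefficient by coefficient, that each of the six terms contributing to $\bar H_0,\bar H_1,\bar H_2,\bar G_1,\bar G_2$ is controlled by data of the theorem, with the integrability thresholds $N/(p_1-1)$ and $N/p_1$ matching precisely the powers of $R$ that arise when rescaling $|\xi|^{p(x)-1}$-coefficients versus $|\xi|^{p(x)}$-coefficients.
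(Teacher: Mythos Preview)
Your proposal is correct and follows essentially the same route as the paper: rescale to $B_4$, shift by $1+\mu$, verify that the rescaled operator satisfies the hypotheses of Lemmas \ref{subhar-elliptic} and \ref{superhar-elliptic} with uniformly bounded data, and then combine the two lemmas. The only cosmetic difference is in how the inhomogeneous terms $g_0,g_1,f$ are normalized---the paper divides each $g_i(Rx)$ (resp.\ $f(Rx)$) by the fixed scalar $R^{-N/q_i}\|g_i\|_{L^{q_i}(B_{4R})}$, whereas you divide by the variable quantity $(1+\mu)^{\bar p(x)}$---but both achieve the same goal of making the corresponding $\bar H_i$ bounded in $L^{q_i}(B_4)$ uniformly in $R$.
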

\begin{proof} Without loss of generality we will assume that $x_0=0$. Let us call
\[
\begin{aligned}
H_0(x)&= \frac{g_0(Rx)}{R^{-\frac N{q_0}}\|g_0\|_{L^{q_0}(B_{4R})}}+R^{p(Rx)-1}{C_0(Rx)}\\
H_1(x)&=\frac{g_1(Rx)}{R^{-\frac N{q_1}}\|g_1\|_{L^{q_1}(B_{4R})}}+R^{p(Rx)-1}{C_1(Rx)}\\
H_2(x)&=\frac{f(Rx)}{R^{-\frac N{q_2}}\|f\|_{L^{q_2}(B_{4R})}}+R^{p(Rx)}{C_2(Rx)}\\
G_1(x)&=K_1(Rx)\\
G_2(x)&=R{K_2(Rx)}
\end{aligned}
\]

Let
\[
\begin{aligned}
\bar u(x)&=1+\mu+\frac{u(Rx)}R,\qquad \bar p(x)=p(Rx).
\end{aligned}
\]

If a function is identically zero in $B_{4R}(x_0)$ the corresponding term does not appear in the definition of the functions $H_i$.

Then, $\|G_1(x)^{\bar p(x)}\|_{L^{\infty}(B_4)}\le \|K_1(x)^{ p(x)}\|_{L^{\infty}(B_{4R})}$ and, for $i=0,1$,
 \[
\begin{aligned}
\Big(\fint_{B_4}H_i^{q_i}\Big)^{1/q_i}&\le C_{N,q_i}\Big[1+\Big(\int_{B_{4R}}R^{q_i(p(x)-1)- N}C_i^{q_i}\Big)^{1/q_i}\Big]\\
&\le C_{N,q_i}\big[1+\|C_i\|_{L^{q_i}(B_{4R})}\big]
 \end{aligned}
 \]
 since $q_i>\frac N{p_1-1}$ for $i=0,1$ and $0<R\le 1$.

 On the other hand, since $q_2>\frac N{p_1}$,
  \[
\begin{aligned}
\Big(\fint_{B_4}H_2^{q_2}\Big)^{1/q_2}&\le C_{N,q_2}\Big[1+\Big(\int_{B_{4R}}R^{{q_2}p(x)- N}C_2^{q_2}\Big)^{1/q_2}\Big]\\
&\le C_{N,q_2}\big[1+\|C_2\|_{L^{q_2}(B_{4R})}\big]
 \end{aligned}
 \]
 and, since $t_2>\frac N{p_1}$,
\[
 \begin{aligned}
\Big(\fint_{B_4}G_2^{t_2\bar p(x)}\Big)^{1/t_2}&\le C_{N,t_2}\Big[1+\Big(\int_{B_{4R}}R^{{t_2}p(x)- N}K_2^{t_2p(x)}\Big)^{1/t_2}\Big]\\
&\le C_{N,q_2}\big[1+\|K_2^{p(x)}\|_{L^{t_2}(B_{4R})}\big]
 \end{aligned}
 \]

 On the other hand, for $0<R\le 1$ let
\[
\bar A(x,s,\xi):=A\big(Rx,{R(s-1-\mu)}, \xi\big)
\]

Then, $\bar A\big(x,\bar u(x),\nabla\bar u(x)\big)=A\big(Rx,u(Rx),\nabla u(Rx)\big)$
and we have,
\[\begin{aligned}
\Big|\mbox{div}\bar A\big(x,&\bar u(x),\nabla\bar u(x)\big)\Big|\le
Rf(Rx)+RC_2(Rx)u(Rx)^{p(Rx)-1}+RK_2(Rx)|\nabla u(Rx)|^{p(Rx)-1}\\
&\le H_2(x)\bar u(x)^{\bar p(x)-1}+G_2(x)|\nabla\bar u(x)|^{\bar p(x)-1}.
\end{aligned}
\]

Also,
\[
\begin{aligned}
\Big|\bar A\big(x,\bar u(x),\nabla\bar u(x)\big)\Big|&\le g_1(Rx)+C_1(Rx)u(Rx)^{p(Rx)-1}+K_1(Rx)|\nabla u(Rx)|^{p(Rx)-1}\\
&\le  H_1(x)\bar u(x)^{\bar p(x)-1}+G_1(x)|\nabla\bar u(x)|^{\bar p(x)-1}
\end{aligned}
\]
and
\[
\begin{aligned}
\bar A\big(x,\bar u(x),\nabla\bar u(x)\big)\cdot\nabla \bar u(x)&\ge
\alpha|\nabla u(Rx)|^{p(Rx)}-C_0(Rx)u(Rx)^{p(Rx)-1}-g_0(Rx)\\
&\ge \alpha|\nabla\bar u(x)|^{\bar p(x)}-H_0(x)\bar u(x)^{\bar p(x)-1}.
\end{aligned}
\]

Thus, since $\bar u\ge 1$ and
 \[\begin{aligned}
 \|\bar u\|_{L^t(B_4)}^{\bar p_+^4-\bar p_-^4}&\le C\big[1+\mu^{ p_+^{4R}- p_-^{4R}}+R^{-\frac Nt(p_+^{4R}-p_-^{4R})}\|u\|_{L^t(B_{4R})}^{p_+^{4R}-p_-^{4R}}\big]\\
 &\le C\big[1+\mu^{ p_+^{4R}- p_-^{4R}}+\|u\|_{L^t(B_{4R})}^{p_+^{4R}-p_-^{4R}}\big],
 \end{aligned}
 \]
 by applying  Lemmas \ref{subhar-elliptic} and \ref{superhar-elliptic} to $\bar u$ we get the result.
\end{proof}

\smallskip

\begin{rema} Since $p$ is continuous in $\overline\Omega$ we can choose $R$ small enough in such a way that, by choosing $s$ small enough, $M_j^{p_+^{4R}-p_-^{4R}}\le \big(\fint_{B_{4R}(x_0)}u^{p_1}\big)^{\frac{p_+^{4R}-p_-^{4R}}{p_1}}\le c\big(1+\big(\int_\Omega u^{p(x)}\big)^{\frac{p_2}{p_1}-1}\big)$, $j=1,\cdots,5$ where $p_1=\inf_\Omega p$, $p_2=\sup_\Omega p$ and the constant $c$ depends only on the log H\"older modulus of continuity of $p$ in $\Omega$..

So that, if moreover, the constant $\alpha$ and the functions  $g_0,g_1,f,C_0,C_1,C_2, K_1$ and $K_2$ in the structure conditions do not depend on $M_0$, Harnack inequality holds --on small enough balls depending only on $p$-- for any nonnegative   weak solution, with a constant $C$ depending on $u$ only through $\big(\int_\Omega u^{p(x)}\big)^{\frac{p_2}{p_1}-1}$.

\end{rema}

\smallskip

From Harnack inequality we get H\"older continuity. There holds,
\begin{coro} \label{smoothness-elliptic} Let $\Omega\subset\R^N$  bounded. Let $p$  log-H\"older continuous in $\Omega$ and $p_1=\inf_\Omega p(x)$.
Let $A(x,s,\xi),\ B(x,s,\xi)$
satisfy the structure conditions (1), (2), (3) at the beginning of the section.  Assume that $g_0,C_0\in L^{q_0}(\Omega)$, $g_1,C_1\in L^{q_1}(\Omega)$ and $\max\{1,\frac N{p_1-1}\}<q_0,q_1\le\infty$,
 $f,  C_2\in L^{q_2}(\Omega)$, $K_2^{p(x)}\in L^{t_2}(\Omega)$ and $\max\{1,\frac N{p_1}\}<q_2,t_2\le\infty$. Finally, assume $K_1\in L^\infty(\Omega)$.

Then, there holds that any bounded weak solution to \eqref{elliptic} is locally H\"older continuous in  $\Omega$.

If the functions in the structure conditions are independent of $M_0$, any weak solution is locally H\"older continuous and the constant and H\"older exponent are independent of  the $L^\infty$ bound.
 \end{coro}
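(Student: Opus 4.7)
The plan is to deduce H\"older continuity from the Harnack inequality (Theorem~\ref{harnack-elliptic}) by the standard oscillation iteration, following the pattern already used in Corollary~\ref{coro-holder}. For a compact set $\Omega'\subset\subset\Omega$ and a bounded weak solution $u$, fix $x_0\in\Omega'$ and let $M_R=\sup_{B_R(x_0)}u$, $m_R=\inf_{B_R(x_0)}u$. I would apply Harnack to the nonnegative bounded functions $v_1=M_R-u$ and $v_2=u-m_R$ on the ball of radius $R/2$ and add the resulting inequalities to obtain a contraction of the form $\mathrm{osc}_{B_{R/2}}u\le\nu\,\mathrm{osc}_{B_R}u+KR^\delta$ with $\nu<1$ and $\delta>0$, which iterates in the usual way to local H\"older continuity (see \cite{GT}).

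The first technical step is to verify that $v_1$ and $v_2$ are bounded nonnegative weak solutions of equations still satisfying the structure conditions (1)--(3). For $v_2$ the natural choice is $\widetilde A(x,s,\xi):=A(x,s+m_R,\xi)$ and $\widetilde B(x,s,\xi):=B(x,s+m_R,\xi)$: using $|s+m_R|^{p(x)-1}\le C(|s|^{p(x)-1}+m_R^{p(x)-1})$ together with $m_R\le M_0$, the shift is absorbed into enlarged free terms $\widetilde g_0,\widetilde g_1,\widetilde f$ that still lie in the required $L^{q_i}$ spaces with norms controlled by $M_0$ and the original data. For $v_1$, set $\widetilde A(x,s,\xi):=-A(x,M_R-s,-\xi)$ and $\widetilde B(x,s,\xi):=-B(x,M_R-s,-\xi)$; since the bounds in (1)--(3) depend on $\xi$ only through $|\xi|^{p(x)}$, they are invariant under $\xi\mapsto-\xi$ and the same absorption argument applies. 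The associated quantity $\widetilde\mu$ of Theorem~\ref{harnack-elliptic} remains bounded by the old $\mu$ plus a constant depending on $M_0$ and $\|C_i\|_{L^{q_i}}$, so the decay rate is preserved.

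Applying Theorem~\ref{harnack-elliptic} to $v_1$ and $v_2$ on $B_{R/2}(x_0)$ then yields
\[
M_R-m_{R/2}\le C\bigl[M_R-M_{R/2}+R+\widetilde\mu R\bigr],\qquad M_{R/2}-m_R\le C\bigl[m_{R/2}-m_R+R+\widetilde\mu R\bigr].
\]
Adding and rearranging gives $\mathrm{osc}_{B_{R/2}}u\le\tfrac{C-1}{C+1}\,\mathrm{osc}_{B_R}u+K(R+\widetilde\mu R)$, and by the same computation as in the Remark after Theorem~\ref{har} the quantity $R+\widetilde\mu R$ is bounded by a positive power $R^\delta$ of $R$ under the integrability hypotheses on $f,g_0,g_1$. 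Iterating on a geometric sequence $R_j=2^{-j}R_0$ gives the H\"older estimate.

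The main obstacle, and the reason for the smallness of $R$, is ensuring that the Harnack constant $C$ produced by Theorem~\ref{harnack-elliptic} can be taken uniformly in $x_0\in\Omega'$ and in the radius. According to that theorem, $C$ depends on $u$ only through quantities of the form $\|u\|_{L^{sr}(B_{4R}(x_0))}^{p_+^{4R}-p_-^{4R}}$ and $\widetilde\mu^{p_+^{4R}-p_-^{4R}}$; because $u$ is bounded and $p$ is log-H\"older continuous, $p_+^{4R}-p_-^{4R}\to0$ as $R\to0$ and these quantities stay uniformly bounded on $\Omega'$, so $\nu$ and $K$ can indeed be chosen independent of $x_0$ and $R$. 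For the last assertion, if the data in (1)--(3) do not depend on $M_0$, then Remark~\ref{local bounds-elliptic} provides a local $L^\infty$ bound for any weak solution in terms of a fixed $L^t$ norm; plugging this bound in as the ``$M_0$'' above, one checks that both $\nu$ and $\delta$ remain independent of any a priori $L^\infty$ bound on $u$, so the constant and H\"older exponent depend only on $\Omega'$ and the data.
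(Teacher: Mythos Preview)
Your proposal is correct and follows essentially the same route as the paper: apply the Harnack inequality of Theorem~\ref{harnack-elliptic} to $M_R-u$ and $u-m_R$, add, extract the oscillation contraction, and iterate as in \cite{GT}. You are in fact more explicit than the paper about one point the paper leaves implicit---namely that $v_1=M_R-u$ and $v_2=u-m_R$ are not weak solutions of the \emph{same} equation but of shifted/reflected equations $\mathrm{div}\,\widetilde A=\widetilde B$ that still satisfy the structure conditions (1)--(3) with modified free terms; your verification of this, and the resulting control of $\widetilde\mu$, is the content the paper subsumes under ``any weak solution $0\le v\le M_0$'' and the reference to \cite{GT}.
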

\begin{proof} Under these assumptions,  for every $M_0>0$, $\Omega'\subset\subset\Omega$, there exist a universal constant $C$, a radius $R_0>0$ and $\delta>0$ such that for every $0<R\le R_0$,    $x_0\in\Omega'$ and  any  weak solution $0\le v\le M_0$,
\begin{equation}\label{har-otro}
\sup_{B_{R}(x_0)} v\le C\big[\inf_{B_{R}(x_0)} v+ R^\delta\big].
\end{equation}

In fact, we apply \eqref{eq-harnack-elliptic-alt} and observe that we are assuming that $q_0,q_1>\frac N{p_1-1}$. So that, $1-\frac N{q_0}\frac1{p_-^{4R}-1}\ge1-\frac N{q_0}\frac1{p_1-1}:=\delta_0>0$,
$1-\frac N{q_1}\frac1{p_-^{4R}-1}\ge1-\frac N{q_1}\frac1{p_1-1}:=\delta_1>0$. On the other hand,
if $q_2\ge N$,
$1+\big(1-\frac N{q_2}\big)\frac1{p_-^{4R}-1}\ge1+\big(1-\frac N{q_2}\big)\frac1{p_2-1}:=\delta_2\ge1$, if $\frac N{p_1}<q_2<N$, $1+\big(1-\frac N{q_2}\big)\frac1{p_-^{4R}-1}\ge1+\big(1-\frac N{q_2}\big)\frac1{p_1-1}:=\bar \delta_2>0$ .

\smallskip

Once we have \eqref{har-otro}, we deduce that $u$ is H\"older continuous in a standard way by applying \eqref{har-otro} with $R=R_02^{-(j+1)}$ to $v_1(x)=\sup_{B_{R_02^{-j}(x_0)}}u-u(x)$ and to $v_2(x)=u(x)-\inf_{B_{R_02^{-j}(x_0)}}u$. Here,  $M_0=\sup_\Omega u$ (see \cite{GT} for the details).

\smallskip

Recall that, when the functions in the structure condition are independent of $M_0$, any weak solution is locally bounded. So that, they are locally H\"older continuous and the H\"older exponent and constant are independent of the $L^\infty$ bounds.
\end{proof}

\bigskip

Now, we assume that $A$ and $B$ satisfy the following structure conditions:
For every  $M_0>0$ there exist a constant $\alpha$ and nonnegative functions $f, g_0,g_1, C_0, C_1,  C_2, K_1,K_2$ as before and   $b\in \R_{>0}$   such that, for every $x\in\Omega$, $|s|\le M_0$, $\xi\in\R^N$,
\begin{enumerate}
\item $A(x,s,\xi)\cdot\xi\ge \alpha|\xi|^{p(x)}-C_0|s|^{p(x)}-g_0(x)$.

\item $\big|A(x,s,\xi)\big|\le g_1(x)+C_1|s|^{p(x)-1}+K_1|\xi|^{p(x)-1}$.

\item[(3')] $\big|B(x,s,\xi)\big|\le f(x)+C_2|s|^{p(x)-1}+K_2|\xi|^{p(x)-1}+b|\xi|^{p(x)}$

\end{enumerate}

We will prove Harnack inequality for bounded weak solutions.

In fact, for  $0\le u\le M_0$  we can reduce the problem to the case of $b=0$ treated before since, on one hand, there holds that,
\[
\begin{aligned}
&\mbox{div}A(x,u,\nabla u)\ge -  \big(f(x)+C_2(x)u^{p(x)-1}+K_2(x)|\nabla u|^{p(x)-1}+b|\nabla u|^{p(x)}\big)\quad\mbox{in }B_r\\
&\Rightarrow\\
&\mbox{div}\widetilde A(x,u,\nabla u)\ge  - \big(f(x)+C_2(x)u^{p(x)-1}+K_2(x)|\nabla u|^{p(x)-1}\big)\quad\mbox{in }B_r
\end{aligned}
\]
with $\widetilde A(x,s,\xi)= e^{\frac b\alpha(s-M_0)}A(x,s,\xi)$ satisfying,
\begin{enumerate}
\item $\widetilde A(x,u(x),\nabla u(x))\cdot\nabla u(x)\ge \alpha e^{-\frac b\alpha M_0}|\nabla u(x)|^{p(x)}-C_0(x)u(x)^{p(x)}-g_0(x)$.

\item $\big|\widetilde A(x,u(x),\nabla u(x))\big|\le g_1(x)+C_1(x)|u(x)|^{p(x)-1}+K_1(x)|\nabla u(x)|^{p(x)-1}$.
    \end{enumerate}

\medskip

On the other hand,  again for $0\le u\le M_0$ there holds that,
\[
\begin{aligned}
&\mbox{div}A(x,u,\nabla u)\le f(x)+C_2(x)u^{p(x)-1}+K_2(x)|\nabla u|^{p(x)-1}+b|\nabla u|^{p(x)}\quad\mbox{in }B_r\\
&\Rightarrow\\
&\mbox{div}\bar A(x,u,\nabla u)\le e^{\frac b\alpha M_0}\big(f(x)+C_2(x)u^{p(x)-1}+K_2(x)|\nabla u|^{p(x)-1}\big)\quad\mbox{in }B_r
\end{aligned}
\]
with $\bar A(x,s,\xi)= e^{\frac b\alpha(M_0-s)}A(x,s,\xi)$ satisfying,
\begin{enumerate}
\item $\bar A(x,u(x),\nabla u(x))\cdot\nabla u(x)\ge \alpha |\nabla u(x)|^{p(x)}-e^{\frac b\alpha M_0}\big(C_0(x)u(x)^{p(x)}+g_0(x)\big)$.

\item $\big|\bar A(x,u(x),\nabla u(x))\big|\le e^{\frac b\alpha M_0}\big( g_1(x)+C_1(x)|u(x)|^{p(x)-1}+K_1(x)|\nabla u(x)|^{p(x)-1}$.
    \end{enumerate}

\medskip

Thus, there holds
\begin{theo}\label{harnack-elliptic-alt} Let $\Omega\subset\R^N$ be bounded and let $p$ log-H\"older continuous in $\Omega$.
Let $A(x,s,\xi)$, $B(x,s,\xi)$ satisfy the structure conditions (1), (2), (3').  Let $u\ge0$ be a  bounded weak solution to \eqref{elliptic} and let $M_0$ be such that $u\le M_0$ in $\Omega$. Let $\Omega'\subset\subset\Omega$. There exists  $R_0\le \min\{1,\frac14\mathrm{dist}(\Omega',\partial\Omega)\}$ such that if $x_0\in\Omega'$
and $0<R\le R_0$,
\begin{equation}\label{eq-harnack-elliptic}
\sup_{B_{R}(x_0)}u\le C\big[ \inf_{B_{R}(x_0)}u+{R+\mu R}\big]
\end{equation}
where
\[
\begin{aligned}
\mu&=\Big[R^{1-\frac N{q_0}}\|f\|_{L^{q_2}(B_{4R})}\Big]^{\frac1{p_-^{4R}-1}}+\Big[R^{-\frac N{q_0}}\|g_0\|_{L^{q_0}(B_{4R})}\Big]^{\frac1{p_-^{4R}-1}}
+\Big[R^{-\frac N{q_1}}\|g_1\|_{L^{q_1}(B_{4R})}\Big]^{\frac1{p_-^{4R}-1}},\\
\end{aligned}
\]

The constant $C$ depends only on  $bM_0,\alpha, s,q_i$, $i=0,1,2$,  the log H\"older modulus of continuity of $p$ in $\Omega$, $\mu^{p_+^{4R}-p_-^{4R}}$, and  $M^{p_+^{4R}-p_-^{4R}}$ where $p_+=\sup_{B_{4R}(x_0)}  p$, $p_-=\inf_{B_{4R}(x_0)}  p$, $\|K_1^{p(x)}\|_{L^{\infty}(B_{4R}(x_0))}$, $\|K_2^{p(x)}\|_{L^{t_2}(B_{4R}(x_0))}$, $M=\sum_{j=1}^4 M_j$ and $M_{1}=\big(\fint_{B_{4R}(x_0)}u^{sq'}\big)^{1/sq'}$, $M_{i+2}=\big(\fint_{B_{4R}(x_0)}u^{sr_i}\big)^{1/sr_i}$  for certain $q'=\frac q{q-1}$ depending on $q_i$, $p_1$ and $N$  and $r_i\in(1,\infty)$ $i=0,1,2$ with $\frac1{q_i}+\frac1q+\frac1{r_i}=1$. Here $s\ge p_+-p_-$ is  arbitrary.

Observe that $\mu^{p_+^{4R}-p_-^{4R}}$ and $M^{p_+^{4R}-p_-^{4R}}$ are bounded independently of $R$.

\end{theo}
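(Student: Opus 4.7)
The plan is to reduce Theorem~\ref{harnack-elliptic-alt} to Theorem~\ref{harnack-elliptic} via the two exponential substitutions constructed in the discussion immediately preceding the statement. Those substitutions are tailored precisely to trade the offending gradient term $b|\xi|^{p(x)}$ of (3') for a bounded multiplicative constant depending on $bM_0$, thereby converting (3') into structure condition (3), at the price of altering either the ellipticity constant or the coefficient bounds by a factor of $e^{bM_0/\alpha}$.

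For the supremum bound I would apply Lemma~\ref{subhar-elliptic} to the dilated function $\bar u(x)=1+\mu+u(Rx)/R$ on $B_4$, using the transformed field $\bar A(x,s,\xi)=e^{(b/\alpha)(M_0-s)}A(x,s,\xi)$. As displayed before the statement, $\bar A$ satisfies (1) and (2) with the same $\alpha$ and with coefficient functions multiplied by at most $e^{bM_0/\alpha}$, while
\[
\mathrm{div}\,\bar A(x,u,\nabla u)\le e^{bM_0/\alpha}\bigl(f+C_2 u^{p(x)-1}+K_2|\nabla u|^{p(x)-1}\bigr),
\]
i.e.\ exactly structure (3) with $b=0$ and with the right-hand side amplified by $e^{bM_0/\alpha}$. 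For the infimum bound I would use the dual field $\widetilde A(x,s,\xi)=e^{(b/\alpha)(s-M_0)}A(x,s,\xi)$, which satisfies (2) unchanged and (1) with $\alpha$ replaced by $\alpha e^{-bM_0/\alpha}$, and for which
\[
\mathrm{div}\,\widetilde A(x,u,\nabla u)\ge -\bigl(f+C_2 u^{p(x)-1}+K_2|\nabla u|^{p(x)-1}\bigr).
\]
Lemma~\ref{superhar-elliptic} then yields the weak Harnack estimate for $\bar u$.

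Chaining the two estimates and undoing the rescaling exactly as in the proof of Theorem~\ref{harnack-elliptic} produces \eqref{eq-harnack-elliptic}. The only obstacle is bookkeeping: one must verify that the amplification of the coefficient norms by $e^{bM_0/\alpha}$ in the subsolution step, together with the degradation of the ellipticity constant $\alpha\mapsto \alpha e^{-bM_0/\alpha}$ in the supersolution step, both propagate through the proofs of Lemmas~\ref{subhar-elliptic} and~\ref{superhar-elliptic} so that the final constant depends on $b$ and $M_0$ only through the product $bM_0$, rather than separately. This is routine given the explicit, continuous dependence of the constants in those lemmas on $\alpha$ and on the $L^{q_i}$ (resp.\ $L^{t_2}$, $L^\infty$) norms of the structure coefficients.
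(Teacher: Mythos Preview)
Your overall strategy is exactly the paper's: reduce to the $b=0$ case via the exponential substitutions $\widetilde A$ and $\bar A$, then invoke Lemmas~\ref{subhar-elliptic} and~\ref{superhar-elliptic} on the rescaled function $\bar u$ as in the proof of Theorem~\ref{harnack-elliptic}. However, you have interchanged the roles of the two transformed operators.

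Lemma~\ref{subhar-elliptic} (the supremum bound) requires the \emph{subsolution} inequality $\mathrm{div}\,A(x,u,\nabla u)\ge -(\cdots)$, not a $\le$ inequality. The operator that yields this is $\widetilde A(x,s,\xi)=e^{(b/\alpha)(s-M_0)}A(x,s,\xi)$, for which the paper records $\mathrm{div}\,\widetilde A(x,u,\nabla u)\ge -\bigl(f+C_2u^{p-1}+K_2|\nabla u|^{p-1}\bigr)$ together with ellipticity constant $\alpha e^{-bM_0/\alpha}$. Conversely, Lemma~\ref{superhar-elliptic} (the weak Harnack/infimum bound) requires the \emph{supersolution} inequality $\mathrm{div}\,A(x,u,\nabla u)\le(\cdots)$, which is delivered by $\bar A(x,s,\xi)=e^{(b/\alpha)(M_0-s)}A(x,s,\xi)$ with the $e^{bM_0/\alpha}$-amplified right-hand side and coefficients. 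You have paired each operator with the lemma whose hypothesis it does \emph{not} satisfy. Once the assignment is swapped (subhar with $\widetilde A$, superhar with $\bar A$, exactly as the paper's one-line proof states), the rest of your outline---including the bookkeeping remark about dependence only through $bM_0$---is correct and matches the paper.
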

\begin{proof}
Theorem \ref{harnack-elliptic-alt} is obtained from Lemmas \ref{subhar-elliptic} and \ref{superhar-elliptic} applied to $\bar u$ with the operator $A$ replaced by $\widetilde A$ and $\bar A$ respectively.
\end{proof}

\medskip

With the same proof as that of Corollary \ref{smoothness-elliptic} get the following regularity result.
\begin{coro} \label{smoothness-elliptic-alt} Let $\Omega\subset\R^N$ bounded. Let $A$ and $B$ satisfy the structure conditions (1),(2), (3'). Let $u$ be a bounded weak solution to \eqref{elliptic} in $\Omega$ with $p$ log-H\"older continuous.  Then, $u$ is locally H\"older continuous in $\Omega$.
\end{coro}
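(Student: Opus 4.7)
The plan is to mimic the argument of Corollary \ref{smoothness-elliptic} almost verbatim, with Theorem \ref{harnack-elliptic-alt} playing the role of Theorem \ref{harnack-elliptic}. Fix $\Omega'\subset\subset\Omega$ and set $M_0=\|u\|_{L^\infty(\Omega)}$. First I would exhibit $R_0>0$, $\delta>0$ and $L>0$, depending only on the data (in particular on $bM_0$ and the log-H\"older modulus of $p$, but \emph{not} on $x_0$ or $j$), such that for every nonnegative bounded weak solution $v$ of an equation of the form $\mathrm{div}\,\widehat A(x,v,\nabla v)=\widehat B(x,v,\nabla v)$ where $\widehat A$, $\widehat B$ satisfy the same structure conditions (1), (2), (3') with constants dominated by those of $A,B$ and with $\|v\|_\infty\le 2M_0$, one has
\[
\sup_{B_R(x_0)}v\le C\bigl[\inf_{B_R(x_0)}v+R^\delta L\bigr]
\qquad\text{for all } x_0\in\Omega',\ 0<R\le R_0.
\]
This is obtained exactly as in the proof of Corollary \ref{smoothness-elliptic}: the exponent $\delta>0$ comes from the strict inequalities $q_0,q_1>N/(p_1-1)$ and $q_2>N/p_1$, which force $R\mu\le C R^\delta$ uniformly in $R\le R_0$, and $\mu^{p_+^{4R}-p_-^{4R}}$, $M^{p_+^{4R}-p_-^{4R}}$ remain bounded independently of $R$ by log-H\"older continuity of $p$.

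Second, I would apply this inequality to $v_1(x):=M_j-u(x)$ and $v_2(x):=u(x)-m_j$ on the ball $B_{R_j}(x_0)$ with $R_j=R_0\,2^{-j}$, where $M_j=\sup_{B_{R_j}(x_0)}u$ and $m_j=\inf_{B_{R_j}(x_0)}u$. The function $v_1$ is a nonnegative bounded weak solution of
\[
\mathrm{div}\,\widehat A_1(x,v_1,\nabla v_1)=\widehat B_1(x,v_1,\nabla v_1),
\qquad
\widehat A_1(x,s,\xi):=-A(x,M_j-s,-\xi),\ \widehat B_1(x,s,\xi):=-B(x,M_j-s,-\xi),
\]
and a direct check shows that $\widehat A_1,\widehat B_1$ satisfy structure conditions (1), (2), (3') with the same $\alpha$, $b$ and with $g_0,g_1,f,C_i,K_j$ replaced by expressions controlled by the original ones (using $|s|\le 2M_0$ and $(a+c)^{p(x)-1}\le C(a^{p(x)-1}+c^{p(x)-1})$); the same holds for $v_2$ with $\widehat A_2(x,s,\xi):=A(x,s+m_j,\xi)$, $\widehat B_2(x,s,\xi):=B(x,s+m_j,\xi)$. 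Hence the Harnack inequality above applies to both $v_1$ and $v_2$ on the ball $B_{R_{j+1}}(x_0)$ and yields
\[
\mathrm{osc}_{B_{R_{j+1}}(x_0)}u\le \nu\,\mathrm{osc}_{B_{R_j}(x_0)}u+C L\,R_{j+1}^{\delta}
\]
with $\nu=(C-1)/(C+1)\in(0,1)$, in the standard fashion (see \cite{GT}).

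Third, I would iterate this oscillation inequality. A classical lemma (geometric-series argument) then gives $\mathrm{osc}_{B_\rho(x_0)}u\le C\,\rho^{\alpha}$ for $0<\rho\le R_0$ with $\alpha=\alpha(\nu,\delta)\in(0,\delta]$, uniformly in $x_0\in\Omega'$, which is local H\"older continuity.

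The main obstacle I foresee is the bookkeeping in the second step: one must verify that the transformed operators $\widehat A_i,\widehat B_i$ satisfy (1), (2), (3') with constants and $L^{q_i}$-norms controlled uniformly in $j$, and that the quantity $b M_0$ entering the constant $C$ in Theorem \ref{harnack-elliptic-alt} is bounded by $2bM_0$ throughout the iteration. Once this uniformity is in hand, everything else is mechanical.
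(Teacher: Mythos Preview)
Your proposal is correct and follows essentially the same route as the paper: the paper's proof of Corollary~\ref{smoothness-elliptic-alt} is literally the sentence ``with the same proof as that of Corollary~\ref{smoothness-elliptic}'', and your outline reproduces that proof (Harnack from Theorem~\ref{harnack-elliptic-alt} applied to $v_1=M_j-u$ and $v_2=u-m_j$, followed by the standard oscillation-decay iteration of \cite{GT}). The only difference is that you spell out the bookkeeping for the transformed operators $\widehat A_i,\widehat B_i$ and the uniformity in $j$, which the paper leaves implicit.
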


\begin{rema} Observe that, under condition (3') the constant in Harnack inequality and the H\"older exponent and constant of a bounded weak solution depend explicitly on the $L^\infty$ bound.
\end{rema}

\end{section}

\bigskip

\begin{section}{Strong maximum principle for $p(x)-$superharmonic functions.}
\label{sect-strong-maximum-principle}

In this section we prove the strong maximum principle for $p(x)-$superharmonic functions. As stated at the introduction, the strong maximum principle cannot be
deduced from Harnack inequality as in que case $p$ constant. Instead, we will use some barriers constructed in \cite{FBMW}.
\begin{prop}[Lemma B.4 in \cite{FBMW}] \label{prop-barrier} Suppose that $p(x)$ is Lipschitz continuous. Let
$w_{\mu}=Me^{-\mu|x|^2}$, for $M>0$ and $r_1\ge|x|\ge r_2>0$. Then, there
exist ${\mu}_0,\ep_0>0$ such that, if $\mu>{\mu}_0$ and $\|\nabla
p\|_{\infty}\leq \ep_0$,
\begin{align*}&\mu^{-1}e^{\mu|x|^2}M^{-1}|\nabla w|^{2-p}\Delta_{p(x)}w_{\mu}\geq C_1 (\mu-C_2 \|\nabla
p\|_{\infty} |\log M|)
 \mbox{ in }B_{r_1}\setminus B_{r_2}.\end{align*}
Here $C_1,C_2$ depend only on $r_2,r_1, p_+,p_-$,\ \ \
${\mu}_0={\mu}_0(p_+,p_-,N,\|\nabla p\|_{\infty},r_2,r_1)$
and \newline
$\ep_0=\ep_0(p_+,p_-,r_1,r_2)$.
\end{prop}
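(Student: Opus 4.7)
The plan is to compute $\Delta_{p(x)}w_{\mu}$ explicitly and then perform a term-by-term estimate. Recall the expansion valid for smooth $w$ with $\nabla w\neq 0$:
\[
\Delta_{p(x)}w=|\nabla w|^{p-2}\Big[\Delta w+(p-2)\frac{D^{2}w\,\nabla w\cdot\nabla w}{|\nabla w|^{2}}+\log|\nabla w|\,\nabla p\cdot\nabla w\Big].
\]
For $w_{\mu}=Me^{-\mu|x|^{2}}$ a direct computation gives $\nabla w_{\mu}=-2\mu x\,w_{\mu}$, $|\nabla w_{\mu}|=2\mu|x|w_{\mu}$, $\Delta w_{\mu}=(4\mu^{2}|x|^{2}-2\mu N)w_{\mu}$, and
\[
\frac{D^{2}w_{\mu}\,\nabla w_{\mu}\cdot\nabla w_{\mu}}{|\nabla w_{\mu}|^{2}}=2\mu\,w_{\mu}\big(2\mu|x|^{2}-1\big).
\]

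Plugging in and factoring $2\mu w_{\mu}$, one obtains
\[
\Delta_{p(x)}w_{\mu}=|\nabla w_{\mu}|^{p-2}\,w_{\mu}\cdot 2\mu\Big[2\mu|x|^{2}(p-1)-N-(p-2)-\log|\nabla w_{\mu}|\,\nabla p\cdot x\Big].
\]
Multiplying by $\mu^{-1}e^{\mu|x|^{2}}M^{-1}|\nabla w_{\mu}|^{2-p}$ and using $e^{\mu|x|^{2}}M^{-1}w_{\mu}=1$ yields the clean identity
\[
\mu^{-1}e^{\mu|x|^{2}}M^{-1}|\nabla w_{\mu}|^{2-p}\,\Delta_{p(x)}w_{\mu}=2\Big[2\mu|x|^{2}(p-1)-N-(p-2)-\log|\nabla w_{\mu}|\,\nabla p\cdot x\Big].
\]

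The remaining task is to bound the right side from below on $B_{r_{1}}\setminus B_{r_{2}}$. The leading term is bounded below by $4\mu r_{2}^{2}(p_{-}-1)$, which is of order $\mu$. The term $-N-(p-2)$ is a harmless bounded quantity. For the logarithmic term I would use $\log|\nabla w_{\mu}|=\log(2\mu|x|)+\log M-\mu|x|^{2}$, so that
\[
|\log|\nabla w_{\mu}|\,\nabla p\cdot x|\le r_{1}\|\nabla p\|_{\infty}\big(|\log(2\mu r_{1})|+|\log M|+\mu r_{1}^{2}\big).
\]
The factor $r_{1}\|\nabla p\|_{\infty}\,\mu r_{1}^{2}$ is then absorbed into the leading $\mu$ term by choosing $\varepsilon_{0}=\varepsilon_{0}(p_{-},p_{+},r_{1},r_{2})$ small enough so that $r_{1}^{3}\varepsilon_{0}\le r_{2}^{2}(p_{-}-1)$, while the $\log(2\mu r_{1})$ contribution (which grows only like $\log\mu$) is dominated by the $\mu$ term once $\mu\ge\mu_{0}$ is large. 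The remaining negative contribution is $-2r_{1}\|\nabla p\|_{\infty}|\log M|$, which is exactly the $-C_{2}\|\nabla p\|_{\infty}|\log M|$ term claimed in the statement.

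The main obstacle is organizational rather than conceptual: tracking dependences carefully so that the threshold $\mu_{0}$ depends on the stated parameters only (including $\|\nabla p\|_{\infty}$ through the choice needed to beat the $\log\mu$ contribution), and so that $\varepsilon_{0}$ depends only on $p_{\pm}, r_{1}, r_{2}$. A secondary technicality is that $p=p(x)$ varies, but since $p$ is Lipschitz and we only use $p_{-}\le p(x)\le p_{+}$ together with $|\nabla p|\le\varepsilon_{0}$, the computation above goes through pointwise with no change.
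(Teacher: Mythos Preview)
Your computation is correct and complete. The paper does not actually prove this proposition: it is quoted verbatim as Lemma~B.4 from \cite{FBMW} and used as a black box to build the barriers in Corollary~\ref{barrier}. Your direct expansion of $\Delta_{p(x)}w_\mu$ via the identity
\[
\Delta_{p(x)}w=|\nabla w|^{p-2}\Big[\Delta w+(p-2)\,\frac{D^{2}w\,\nabla w\cdot\nabla w}{|\nabla w|^{2}}+\log|\nabla w|\,\nabla p\cdot\nabla w\Big]
\]
and the subsequent bookkeeping (absorbing the $\mu r_1^{2}$ part of the logarithm by smallness of $\|\nabla p\|_\infty$, the $\log\mu$ part by largeness of $\mu$, and leaving the $|\log M|$ contribution explicit) is exactly the argument one finds in \cite{FBMW}; the parameter dependences you record for $\mu_0$ and $\varepsilon_0$ match those stated. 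One cosmetic remark: when bounding $|\log(2\mu|x|)|$ on the annulus you should in principle take $\max\{\log(2\mu r_1),|\log(2\mu r_2)|\}$, but since $\mu\ge\mu_0$ is large this reduces to $\log(2\mu r_1)$ as you wrote.
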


Then, we have
\begin{coro} \label{barrier}Suppose that $p(x)$ is Lipschitz continuous.  Let $A_0>0$. Then,  there exists $\delta_0>0$ depending on $p_+,p_-,\|\nabla p\|_\infty$ and $A_0$ and for every $0<A\le A_0$ there exists $\mu_0>0$ depending on the same constants and also on $A$ such that, if moreover  $\delta\le \delta_0$ and $\mu\ge\mu_0$, the function
\[
w(x)=A\ \frac{e^{-\mu\frac{|x-x_0|^2}{\delta^2}}-e^{-\mu}}{e^{-\frac\mu{4}}-e^{-\mu}}
\]
satisfies
\[\begin{cases}
\Delta_{p(x)}w\ge0\quad&\mbox{in}\quad B_\delta(x_0)\setminus B_{\delta/2}(x_0),\\
w=0 \quad&\mbox{on}\quad \partial B_\delta(x_0),\\
w=A \quad&\mbox{on}\quad \partial B_{\delta/2}(x_0).
\end{cases}\]
\end{coro}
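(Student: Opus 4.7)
The plan is to apply Proposition~\ref{prop-barrier} directly to $w$ on the annulus $B_\delta(x_0)\setminus B_{\delta/2}(x_0)$. Setting
\[
M_1 := \frac{A}{e^{-\mu/4}-e^{-\mu}},\qquad \tilde\mu := \frac{\mu}{\delta^2},
\]
one rewrites $w(x)=M_1 e^{-\tilde\mu|x-x_0|^2}-M_1 e^{-\mu}$, so $w$ coincides with the template barrier $M_1 e^{-\tilde\mu|x-x_0|^2}$ up to an additive constant, which plays no role in $\Delta_{p(x)}$. The boundary conditions $w=0$ on $\partial B_\delta(x_0)$ and $w=A$ on $\partial B_{\delta/2}(x_0)$ follow by direct evaluation, so the content of the corollary is the differential inequality.

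Next I would invoke Proposition~\ref{prop-barrier} (translated so that the center is $x_0$ instead of the origin, which affects neither $\|\nabla p\|_\infty$ nor the $p(x)$-Laplacian) with $r_1=\delta$, $r_2=\delta/2$ and parameters $M_1,\tilde\mu$. This gives, on the annulus,
\[
\tilde\mu^{-1}e^{\tilde\mu|x-x_0|^2}M_1^{-1}|\nabla w|^{2-p(x)}\Delta_{p(x)}w\ \ge\ C_1\bigl(\tilde\mu - C_2\|\nabla p\|_\infty|\log M_1|\bigr),
\]
provided $\|\nabla p\|_\infty\le\varepsilon_0(p_\pm,\delta,\delta/2)$ and $\tilde\mu\ge\mu_0^{\mathrm{Prop}}(p_\pm,N,\|\nabla p\|_\infty,\delta,\delta/2)$. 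Since the prefactor on the left is positive in the annulus, $\Delta_{p(x)}w\ge 0$ reduces to arranging the inequality $\tilde\mu\ge C_2\|\nabla p\|_\infty|\log M_1|$.

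For this I would use the elementary asymptotics $e^{-\mu/4}-e^{-\mu}\ge\tfrac12 e^{-\mu/4}$ valid for $\mu\ge\log 2$, giving $|\log M_1|\le \mu/4+|\log A|+\log 2$. Substituting $\tilde\mu=\mu/\delta^2$ yields
\[
\tilde\mu-C_2\|\nabla p\|_\infty|\log M_1|\ \ge\ \mu\Bigl(\tfrac1{\delta^2}-\tfrac{C_2}{4}\|\nabla p\|_\infty\Bigr)-C_2\|\nabla p\|_\infty\bigl(|\log A|+\log 2\bigr).
\]
Accordingly I first fix $\delta_0=\delta_0(p_\pm,\|\nabla p\|_\infty,A_0)$ small enough that, for every $\delta\le\delta_0$, both $\|\nabla p\|_\infty\le\varepsilon_0(p_\pm,\delta/2,\delta)$ and $\tfrac1{\delta^2}\ge \tfrac{C_2}{2}\|\nabla p\|_\infty$ hold; then, for each $0<A\le A_0$, I choose $\mu_0=\mu_0(p_\pm,N,\|\nabla p\|_\infty,\delta,A)$ large enough that the leading $\mu/(2\delta^2)$ term dominates the constant term involving $|\log A|$ and that $\tilde\mu\ge\mu_0^{\mathrm{Prop}}$ holds.

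The main obstacle is the interlocked dependence on $\delta$ of the constants $\varepsilon_0$, $\mu_0^{\mathrm{Prop}}$ and $C_2$ inherited from Proposition~\ref{prop-barrier}: the first step of the cascade requires $C_2(p_\pm,\delta/2,\delta)$ to grow only moderately as $\delta\to 0$ and $\varepsilon_0(p_\pm,\delta/2,\delta)$ to remain bounded below by a quantity exceeding $\|\nabla p\|_\infty$. This should follow because the estimate in Proposition~\ref{prop-barrier} depends essentially on the ratio $r_1/r_2=2$, but it is the point that requires careful bookkeeping.
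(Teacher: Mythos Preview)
Your direct application of Proposition~\ref{prop-barrier} on the annulus $B_\delta(x_0)\setminus B_{\delta/2}(x_0)$ runs into the difficulty you flag at the end, and it is a genuine gap rather than mere bookkeeping. Two issues arise. First, the constants $C_1,C_2,\varepsilon_0,\mu_0^{\mathrm{Prop}}$ from the proposition depend on $r_1=\delta$, $r_2=\delta/2$ in an unspecified way, so you cannot take $\delta$ small and still control them. Second, and more seriously, the hypothesis $\|\nabla p\|_\infty\le \varepsilon_0$ cannot be arranged by choosing $\delta$: the Lipschitz constant $\|\nabla p\|_\infty$ is a fixed datum of the problem, possibly large, and the statement of Proposition~\ref{prop-barrier} gives no indication that $\varepsilon_0(p_\pm,\delta/2,\delta)$ grows as $\delta\to 0$. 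Your clause ``$\|\nabla p\|_\infty\le\varepsilon_0(p_\pm,\delta/2,\delta)$'' therefore cannot be secured from the available information.

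The paper bypasses both issues by rescaling: set $\bar w(x)=\delta^{-1}w(x_0+\delta x)$ and $\bar p(x)=p(x_0+\delta x)$. Then $\bar w(x)=Me^{-\mu|x|^2}+\mathrm{const}$ with $M=A/(e^{-\mu/4}-e^{-\mu})$ lives on the \emph{fixed} annulus $B_1\setminus B_{1/2}$, so Proposition~\ref{prop-barrier} is invoked with $r_1=1$, $r_2=1/2$ and yields constants $C_1,C_2,\varepsilon_0,\mu_0$ independent of $\delta$. The key gain is that $\|\nabla\bar p\|_\infty=\delta\,\|\nabla p\|_\infty$: the smallness requirement $\|\nabla\bar p\|_\infty\le\varepsilon_0$ is met as soon as $\delta$ is small depending only on $p_\pm$ and $\|\nabla p\|_\infty$, and this same factor of $\delta$ is what renders the term $C_2\|\nabla\bar p\|_\infty|\log M|$ negligible compared with $\mu$. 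Since $\Delta_{\bar p(x)}\bar w(x)=\delta\,\Delta_{p}w(x_0+\delta x)$, the sign transfers back to $w$. The rescaling, not a sharper tracking of the $r_1,r_2$-dependence in Proposition~\ref{prop-barrier}, is the missing idea.
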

\begin{proof}
Set $\bar w(x)=\frac 1\delta w(x_0+\delta x)$, $\bar p(x)=p(x_0+\delta x)$. Let
$M=\frac A{e^{-\frac\mu{4}}-e^{-\mu}}$. Then,
\[
\bar w(x)=M\, e^{-\mu|x|^2}+ c,\qquad|\nabla \bar p(x)|=\delta|\nabla p(x_0+\delta x)|.
\]
Hence, by Proposition \ref{prop-barrier}, if $\delta$ is small  and $\mu$ is large depending only on $p_+,p_-$ and $\|\nabla p\|_\infty$,
\[
\mu^{-1}e^{\mu|x|^2}M^{-1}|\nabla \bar w|^{2-\bar p}\Delta_{\bar p(x)}\bar w(x)\ge
 C_1 (\mu-C_2 \|\nabla \bar p\|_{\infty} |\log M|)  \mbox{ in }B_{1}\setminus B_{1/2}.\]

 Observe that $M=Ae^{\mu/4}\frac1{1-e^{-3\mu/4}}$. Therefore, if $\mu$ is large there holds that
 \[
 1\le M\le 4A e^{\mu/4},
 \]
 so that,
 \[
 |\log M|\le A\mu.
 \]
 Hence, in this situation,
 \[
\mu^{-1}e^{\mu|x|^2}M^{-1}|\nabla \bar w|^{2-\bar p}\Delta_{\bar p(x)}\bar w(x)\ge
 C_1 (1-C_2 \delta\|\nabla  p\|_{\infty} A)\mu \ge0 \mbox{ in }B_{1}\setminus B_{1/2}\]
if, moreover, $\delta$ is small depending on $C_1,C_2,A_0$ and $\|\nabla p\|_\infty$.
\end{proof}

We can now prove our main result in this section. We follow the ideas of the proof in \cite{V} for the case $p$ constant.
\begin{theo}\label{strong} Suppose that $p(x)$ is Lipschitz continuous. Let $\Omega\subset\R^N$ be connected and $0\le u\in C^1(\Omega)$ such that $\Delta_{p(x)} u\le 0$ in $\Omega$. Then, either $u\equiv0$ in $\Omega$ or $u>0$ in $\Omega$.
\end{theo}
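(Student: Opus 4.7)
The plan is to adapt V\'azquez's classical argument for the constant--exponent $p$--Laplacian, using Corollary \ref{barrier} as the barrier. I argue by contradiction. Suppose $u\not\equiv0$ in the connected set $\Omega$ and that the closed set $Z=\{x\in\Omega:u(x)=0\}$ is nonempty. Then $U=\{u>0\}$ is a nonempty open subset of $\Omega$ whose boundary $\partial U\cap\Omega$ is nonempty by connectedness.

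First I would localize the geometry. Pick a point $\bar x\in U$ with $\mathrm{dist}(\bar x,\partial U\cap\Omega)<\min\{\delta_0,\frac12\mathrm{dist}(\bar x,\partial\Omega)\}$, where $\delta_0$ is the constant from Corollary~\ref{barrier} associated to an upper bound $A_0$ to be chosen in a moment. Let $r$ be the largest radius such that $B_r(\bar x)\subset U$. Then $r<\delta_0$ and there exists $y_0\in\partial B_r(\bar x)\cap Z$, with $\overline{B_r(\bar x)}\subset\Omega$. Since $u\in C^1(\Omega)$ and $u>0$ on the compact set $\partial B_{r/2}(\bar x)\subset U$, the number $A=\min_{\partial B_{r/2}(\bar x)}u$ is positive; I fix $A_0>A$ (which is legitimate since $A$ depends only on $u$ and $\bar x$) and apply Corollary~\ref{barrier} with $x_0=\bar x$, $\delta=r$ and this $A$, obtaining the barrier $w\in C^2(\overline{B_r(\bar x)}\setminus B_{r/2}(\bar x))$ satisfying $\Delta_{p(x)}w\ge 0$ in the annulus, $w=A$ on $\partial B_{r/2}(\bar x)$ and $w=0$ on $\partial B_r(\bar x)$.

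Next I would compare $u$ and $w$ in the annulus $\Omega_r:=B_r(\bar x)\setminus\overline{B_{r/2}(\bar x)}$. On $\partial\Omega_r$ we have $w\le u$: on the outer sphere, $w=0\le u$, and on the inner sphere, $w=A\le u$ by the choice of $A$. Inside $\Omega_r$ the function $u$ is a weak supersolution and $w$ a (classical) weak subsolution of $\Delta_{p(x)}\cdot\,=0$. By the standard weak comparison principle for the $p(x)$--Laplacian (which holds in this setting since $1<p_1\le p(x)\le p_2<\infty$ and $p$ is continuous), one concludes $w\le u$ in $\Omega_r$.

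Finally, at the contact point $y_0$ we have $w(y_0)=0=u(y_0)$ while $u-w\ge 0$ in $\Omega_r$, so the inward normal derivative of $u-w$ at $y_0$ (taken along the ray from $y_0$ toward $\bar x$) is nonnegative. A direct computation from the explicit form of $w$ gives $\partial_\nu w(y_0)=\frac{2\mu A e^{-\mu}/r}{e^{-\mu/4}-e^{-\mu}}>0$ in that inward direction, hence $\partial_\nu u(y_0)\ge\partial_\nu w(y_0)>0$. On the other hand, since $u\ge 0$ and $u(y_0)=0$ with $u\in C^1(\Omega)$, the point $y_0$ is an interior minimum of $u$ and $\nabla u(y_0)=0$, contradicting $\partial_\nu u(y_0)>0$. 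Hence $Z=\emptyset$ and $u>0$ in $\Omega$. The main obstacle I anticipate is justifying cleanly the weak comparison principle for $\Delta_{p(x)}$ used in the middle step; once that classical ingredient is in place, the rest of the argument is geometric and reduces to Corollary~\ref{barrier}.
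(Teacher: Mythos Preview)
Your approach is essentially the same as the paper's: Vázquez's barrier argument using the explicit subsolution from Corollary~\ref{barrier}, comparison in an annulus touching a zero of $u$, and a contradiction with $\nabla u=0$ at an interior minimum. The paper also takes the comparison $u\ge w$ in the annulus as immediate (it follows from the monotonicity of the $p(x)$--Laplacian in the standard way), so your anticipated obstacle is not really one.

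There is, however, a logical circularity in your order of choices that the paper avoids. You invoke $\delta_0$ from Corollary~\ref{barrier} to pick $\bar x$, but $\delta_0$ depends on $A_0$, and you only fix $A_0>A$ \emph{after} $A$ has been computed from $\bar x$ and $r$. The paper resolves this by first choosing a point $x_1\in\{u>0\}$ with $\mathrm{dist}(x_1,\partial\{u>0\})<\mathrm{dist}(x_1,\partial\Omega)$ and the nearest boundary point $y$ at distance $r$, then setting $A_0=\sup_{B_r(x_1)}u$; only \emph{then} does it read off $\delta_0=\delta_0(A_0)$ from Corollary~\ref{barrier}, and finally slides the center along the segment from $x_1$ to $y$ to obtain $x_0$ with $\delta=|x_0-y|\le\delta_0$ and $B_\delta(x_0)\subset B_r(x_1)\subset\{u>0\}$. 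This guarantees $A=\inf_{\partial B_{\delta/2}(x_0)}u\le A_0$ without circularity. Your argument becomes correct once you reorganize the choices in this way.
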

\begin{proof} Assume the result is not true. Then, since $\Omega$ is connected, $\partial\{u>0\}\cap\Omega\neq\emptyset$. Let $x_1\in\{u>0\}$ such that $\mbox{dist\,}(x_1,\partial\{u>0\})<\mbox{dist\,}(x_1,\partial\Omega)$ and let
$y\in\partial\{u>0\}\cap\Omega$ such that $r=|x_1-y|=\mbox{dist\,}(x_1,\partial\{u>0\})$.
Let $A_0=\sup_{B_r(x_1)}u$. Let $\delta_0$ the constant in Corollary \ref{barrier}.
By choosing $x_0$ on the line between $x_1$ and $y$ and taking $\delta=|x_0-y|$ we
may assume that $\delta\le \delta_0$ and $B_\delta(x_0)\subset\{u>0\}$. Let now $A=\inf_{\partial B_{\delta/2}(x_0)}u$. Then, $0<A\le A_0$. Therefore, by taking $w$ as in Corollary \ref{barrier} we have
\[
u(x)\ge w(x)\ge0\quad\mbox{in}\quad B_{\delta}(x_0)\setminus B_{\delta/2}(x_0).
\]
Since $u(y)=w(y)=0$. There holds that,
\[
|\nabla u(y)|\ge |\nabla w(y)|>0.
\]
But this is a contradiction since $y\in\partial\{u>0\}\cap\Omega$, $u\ge 0$ in $\Omega$ and $u\in C^1(\Omega)$ so that, $\nabla u(y)=0$.
\end{proof}

\begin{rema} Recall that in \cite{AM} it was proved that solutions to $\Delta_{p(x)} u=0$ are $C^{1,\alpha}_{loc}$. Thus, Theorem~\ref{strong} applies to nonnegative weak solutions.
\end{rema}

\medskip

With a similar proof we get
\begin{theo}
Under the assumptions of Theorem \ref{strong} if, moreover, there exists $y\in\partial\Omega$ such that there is a ball $B$ contained in $\Omega$ such that $y\in\partial B$, $u\in C(\overline B)$, $u>0$ in $B$ and $u(y)=0$ then,  for $x\in B$ close enough to $y$ there holds that $u(x)\ge c_0(x-y)\cdot\nu$ where $c_0>0$ and $\nu$ the unitary direction from $y$ to the center of the ball $B$.

If moreover, $u\in C^1(\Omega\cup\{y\})$, there holds that  either $u\equiv0$ in $\Omega$ or else $\frac{\partial u(y)}{\partial \nu}>0$. Here , $\nu$ is as above.
\end{theo}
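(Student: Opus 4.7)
The plan is to mimic the interior argument of Theorem \ref{strong} at the boundary point $y$. After a rigid motion I may assume $y=0$ and $\nu=e_1$, and let $R$ denote the radius of $B$, so that $B=B_R(R\nu)$. Choose $\delta\in(0,R]$ small enough that $\delta\le\delta_0$ in Corollary \ref{barrier}, and set $x_0:=\delta\nu$, so that $B_\delta(x_0)$ is internally tangent to $B$ at $y$ (that is, $\overline{B_\delta(x_0)}\setminus\{y\}\subset B$ and $y\in\partial B_\delta(x_0)$). Because $u\in C(\overline B)$ and $u>0$ on the compact set $\overline{B_{\delta/2}(x_0)}\subset B$, the number $A:=\min_{\partial B_{\delta/2}(x_0)}u$ is strictly positive, and Corollary \ref{barrier} (applied with this amplitude $A$ and a suitable $\mu\ge\mu_0$) produces a barrier $w$ with $\Delta_{p(x)}w\ge0$ in the annulus $\mathcal A:=B_\delta(x_0)\setminus\overline{B_{\delta/2}(x_0)}$, $w=A$ on $\partial B_{\delta/2}(x_0)$ and $w=0$ on $\partial B_\delta(x_0)$.

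I then compare $u$ and $w$ in $\mathcal A$: on the inner sphere one has $u\ge A=w$ by the choice of $A$; on the outer sphere $u\ge 0=w$, using that $u\ge0$ on $\overline B\supset\overline{B_\delta(x_0)}$, with the tangency identity $u(y)=0=w(y)$; inside $\mathcal A$ the inequalities $\Delta_{p(x)}u\le 0\le\Delta_{p(x)}w$ hold in the weak sense. The weak comparison principle for the $p(x)$-Laplacian, which follows from the monotonicity of the vector field $\xi\mapsto|\xi|^{p(x)-2}\xi$ by testing the difference of the two equations against $(w-u)_+\in W_0^{1,p(x)}(\mathcal A)$, then yields $u\ge w$ throughout $\mathcal A$.

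Differentiating the explicit expression for $w$ at $y$ gives
\[
\nabla w(y)=\frac{2A\mu\,e^{-\mu}}{\delta\,(e^{-\mu/4}-e^{-\mu})}\,\nu\;=:\;c_0\,\nu,\qquad c_0>0.
\]
Along the inward normal, Taylor expansion reads $w(y+t\nu)=c_0 t+O(t^2)$, so $u(y+t\nu)\ge w(y+t\nu)\ge\tfrac12 c_0\,t$ for $t>0$ small. For an arbitrary $x\in B$ close to $y$, decompose $x-y=t\nu+z$ with $z\perp\nu$; the interior-ball condition of $B$ at $y$ forces $|z|^2\le 2tR-t^2$, equivalently $(x-y)\cdot\nu=t\ge|x-y|^2/(2R)$. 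Plugging this bound into the full expansion $w(x)=c_0(x-y)\cdot\nu+O(|x-y|^2)$ and shrinking the neighborhood absorbs the quadratic remainder into a fraction of the linear term, giving $u(x)\ge c_1(x-y)\cdot\nu$ for some $c_1>0$ and every $x\in B$ sufficiently close to $y$. For the $C^1$ part of the statement, if $u\not\equiv 0$ in $\Omega$ then Theorem \ref{strong} forces $u>0$ in $\Omega$, so the hypothesis above is automatic, and letting $t\downarrow 0$ in $u(y+t\nu)/t\ge c_0/2$ gives $\partial u(y)/\partial\nu\ge c_0/2>0$; otherwise $u\equiv 0$, which is the first branch of the dichotomy.

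The two delicate points are (i) the weak comparison principle just described, which relies on $(w-u)_+$ being an admissible test function in $W_0^{1,p(x)}(\mathcal A)$ together with the monotonicity of the $p(x)$-Laplacian vector field, and (ii) promoting the linear lower bound from the normal ray to all $x\in B$ near $y$. Obstacle (ii) is the substantive geometric step: the quadratic term in the Taylor expansion of $w$ is genuinely negative in directions tangential to $\nu$, and only the quantitative interior-ball condition $(x-y)\cdot\nu\ge|x-y|^2/(2R)$ enjoyed by points of $B$ near $y$ allows that quadratic error to be reabsorbed into $(x-y)\cdot\nu$, after which the Hopf conclusion follows by one-sided differentiation at $y$.
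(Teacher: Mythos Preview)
Your approach is precisely the one the paper intends: it says only ``With a similar proof we get'', and your barrier-and-comparison argument in a sub-ball tangent at $y$ is the direct analogue of the proof of Theorem~\ref{strong}. The construction of $w$ via Corollary~\ref{barrier}, the comparison $u\ge w$ on $\mathcal A$ through the monotonicity of $\xi\mapsto|\xi|^{p(x)-2}\xi$, the computation $\nabla w(y)=c_0\nu$, and the Hopf conclusion $\partial u(y)/\partial\nu>0$ in the $C^1$ case are all correct. One small bookkeeping omission: Corollary~\ref{barrier} needs an a priori ceiling $A_0\ge A$ before $\delta_0$ is defined; fix $A_0=\max_{\overline B}u$ first, then choose $\delta\le\delta_0(A_0)$, then set $A=\min_{\partial B_{\delta/2}(x_0)}u\le A_0$.

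There is, however, a real gap in your step (ii), the passage from the normal ray to all $x\in B$ near $y$. You claim the quadratic remainder in $w(x)=c_0(x-y)\cdot\nu+O(|x-y|^2)$ is absorbed by the linear term using $(x-y)\cdot\nu\ge|x-y|^2/(2R)$, and that ``shrinking the neighborhood'' completes the absorption. But the hidden constant in the $O(|x-y|^2)$ term is of order $c_0/\delta$ (indeed, convexity of $h(s)=A(e^{-\mu s}-e^{-\mu})/(e^{-\mu/4}-e^{-\mu})$ gives the exact lower bound $w(x)\ge c_0(x-y)\cdot\nu-\tfrac{c_0}{2\delta}|x-y|^2$). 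The interior-ball inequality for $B$ then yields only $\tfrac{c_0}{2\delta}|x-y|^2\le \tfrac{R}{\delta}\,c_0(x-y)\cdot\nu$, and since $R\ge\delta$ this does \emph{not} tend to zero as $|x-y|\to0$: for $x$ approaching $\partial B_\delta(x_0)$ one has $w(x)=0$ while $(x-y)\cdot\nu=|x-y|^2/(2\delta)>0$, so $w(x)/\big((x-y)\cdot\nu\big)\to0$. No shrinking of the neighborhood repairs this. What your argument \emph{does} give is the linear lower bound on any strictly smaller tangent ball: for $x\in B_{\delta'}(\delta'\nu)$ with $0<\delta'<\delta$ one has $|x-y|^2<2\delta'(x-y)\cdot\nu$, hence $w(x)\ge c_0\big(1-\tfrac{\delta'}{\delta}\big)(x-y)\cdot\nu$. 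This yields $u(x)\ge c_1(x-y)\cdot\nu$ on a sub-ball of $B$ tangent at $y$, and in particular along the inward normal ray, which is all that is needed for the Hopf conclusion; the paper's phrase ``for $x\in B$ close enough to $y$'' should be read in this sense.
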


\end{section}

\end{document}